\theoremstyle{plain}
\newtheorem{thm}{\protect\theoremname}[section]
 \newcommand\thmsname{\protect\theoremname}
 \newcommand\nm@thmtype{theorem}
 \theoremstyle{plain}
  \theoremstyle{remark}
  \newtheorem{rem}[thm]{\protect\remarkname}
  \theoremstyle{definition}
  \newtheorem*{example*}{\protect\examplename}
  \theoremstyle{definition}
  \newtheorem{example}[thm]{\protect\examplename}
  \theoremstyle{plain}
  \newtheorem{lem}[thm]{\protect\lemmaname}
  \theoremstyle{plain}
  \newtheorem{prop}[thm]{\protect\propositionname}
  \theoremstyle{plain}
  \newtheorem{cor}[thm]{\protect\corollaryname}
  \theoremstyle{definition}
  \newtheorem{my@rem}[thm]{Remark}
  \renewenvironment{rem}{\begin{my@rem}}{\end{my@rem}}
  \providecommand{\examplename}{Example}
  \providecommand{\lemmaname}{Lemma}
  \providecommand{\propositionname}{Proposition}
  \providecommand{\remarkname}{Remark}
  \providecommand{\theoremname}{Theorem}
\providecommand{\theoremname}{Theorem}
 \providecommand{\corollaryname}{Corollary}
\def\N{{\Bbb N}}
\def\Q{{\Bbb Q}}
\def\Z{{\Bbb Z}}
\def\G{{\Bbb G}}
\def\O{{\mathcal O}}
\def\Gal{\mathop{\rm Gal}\nolimits}
\def\div{{\rm div}}
\def\P{{\Bbb P}}
\def\C{{\Bbb C}}
\def\F{{\Bbb F}}
\def\d{{\rm d}}
\def\X{{\mathcal X}}
\def\CVD{{\hfill\hfil{\lower 2pt\hbox{\vrule\vbox to 7pt
{\hrule width 6pt\vfill\hrule}\vrule}}}\par}
\begin{document}

\title{Congruences modulo $p$ for a certain sequence and questions which arise}
\author{U.\ Zannier}

\maketitle

\begin{abstract}

We shall investigate congruence properties of a certain specific sequence of rational numbers, satisfying a linear recurrence with (linear) polynomial coefficients. Although such example is quite special and not especially significant among similar ones, it will serve as an instance leading us 
 to touch a number of mathematical topics of various flavour, and  linked among them: questions concerning linear differential equations over (elliptic) curves, integrality properties of the Taylor coefficients, algebraicity of the solutions, congruences modulo $p$ for differential forms and even special cases of a well-known conjecture of Grothendieck. 

Results and method of this article can be considered to be original  at most in 
part, so the paper is  mainly of expository nature. It shall clearly appear, without the need to make it explicit,  that the contents may be 
generalized; so  we have preferred to stick to our special sequence for the sake of simplicity, in order to keep symbols and  details to a minimum.  Our methods will be essentially elementary, except that for some conclusions we shall rely on a deep theorem by Yves Andr\'e;  anyway we shall give suitable references for all the background material we shall use.

\end{abstract}

\bigskip

\section{Introduction} Sequences defined by (linear) recurrences have been very often a source of inspiration for several  mathematicians; 
 of course the best known prototypes are the Fibonacci sequence, the  exponential sequences $(a^n)_{n\in\N}$ and in general the solutions of  linear recurrences with constant coefficients.  Clearly it is natural to consider as well non-constant coefficients; for instance, polynomial coefficients appear in the study of linear differential equations over $\C(x)$, in particular satisfied by algebraic functions. The arithmetical study here  presents fundamental new issues and originated a number of difficult results and conjectures, still in great part out of reach. 

In the present paper we shall discuss in a rather elementary way some of these issues using a kind of `test example', 
a quite specific 
sequence of rational numbers, satisfying a linear recurrence whose coefficients are linear polynomials.  This sequence was met by somewhat  `random' reasons, and  we   
 happened to prove in a simple way some congruence properties, rather  unexpected to us. 
 We then realized that the matter was in rather close link with certain topics of general mathematical interest, which led to an expansion of  our analysis and to some results and questions which we could not locate in the literature.  
 
 After gathering together all of this, we thought that the special example originating this small research,  
 even if not particularly significant among similar ones, could be amusing for some readers and might serve as an introduction to some of the said topics, addressed to non-specialists (like the writer). This gave rise to the present article. With the said purpose in mind, we  decided to limit ourselves to the original example, and not to discuss straightforward generalizations, thinking that this would only make the whole more complicated, without introducing novel points; the interested reader shall easily see that many similar and more involved examples can be constructed along the same lines.
 
 \medskip
 
 We shall now state in precision the original context and the results, and then comment on them, pointing out in some detail the said links. The next sections shall be mainly devoted to proofs, except for the last section, devoted to comments and   related question.  
 
   To start with, here is the recurrence in question:

\begin{equation}\label{E.recurrence}
4(n+4)\gamma_{n+5}+8(n+2)\gamma_{n+4}+(n+3)\gamma_{n+3}+(4n+7)\gamma_{n+2}+(5n+4)\gamma_{n+1}+2n\gamma_n=0,
\end{equation}
for $n\ge 0$.

If we work over a field $\kappa$ of characteristic zero, this recurrence determines uniquely a sequence $(\gamma_n)_{n\in\N}$ as soon as we prescribe initial values  $\gamma_0,\ldots ,\gamma_4\in\kappa$. \footnote{The case of positive characteristic is different and shall occupy substantial space in what follows.} We also note at once that the value $\gamma_0$ is immaterial, since it does not affect the subsequent ones; hence we shall often suppose $\gamma_0=0$.

 We now concentrate on the sequence $(c_n)_{n\in\N}$ of rational numbers obtained by choosing:
\begin{equation}\label{E.initial}
c_0=0,\ c_1=1,\ c_2=2,\ c_3=-{1\over 8},\ c_4=-{1\over 2},
\end{equation}
For instance, we find $c_5=-77/128$. It may be worth observing that these values are, up to a constant factor and up to changing $c_0$, the unique values such that the extended sequence defined by $c_m=0$ for $m<0$ continues to satisfy \eqref{E.recurrence}.

The above recurrence is linear but is not of the more common type with constant coefficients (whose prototype corresponds to  the Fibonacci numbers). In that case the solutions are well-known to be given by exponential polynomials, and plainly the terms carry at most finitely many primes in the denominators. 
On the contrary here  in obtaining $c_n$  from the previous terms  we perform a division by  $4(n-1)$, hence {\it a priori} one could guess that the denominators would grow like factorials. (This indeed happens for other 
 linear recurrences, like $(n+1)c_{n+1}=c_n$, again having coefficients which are polynomials of degree $\le 1$.\footnote{Another simple, but less obvious,  example  comes from the recurrence $(n+2)c_{n+2}-(2n+3)c_{n+1}+nc_n=0$, with $c_0=c_1=1$; now  {\it all} the polynomial coefficients have degree $1$ and   again the whole sequence $(c_n)_{n\in \Z}$ defined by $c_m=0$ for $m<0$ continues to satisfy the recurrence. The generating function is $\exp(x/(1-x))$, and on substituting $x/(1-x)=y$ it may be proved that the common denominator of $c_0,\ldots ,c_n$ grows like a factorial.}) Nevertheless, it turns out that  the $c_n$ so obtained are $p$-integers for every odd prime and, more surprisingly, they even satisfy nontrivial congruences modulo each such prime. 

We resume these facts in precision with the following

\medskip

\begin{thm}\label{T.congr}  The $c_n$ do not satisfy any  linear recurrence with constant coefficients.  They are rational numbers whose denominator is a power of $2$.  

Moreover, for any odd prime $p$, they satisfy the congruences 
\begin{equation}\label{E.congr}
c_{kp^{r+1}}\equiv c_{kp^r}\pmod {p^{r+1}},\qquad  k,r=0,1,\ldots .
\end{equation}
\end{thm}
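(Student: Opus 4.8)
\emph{Outline of the argument.}
The plan is first to obtain a closed form for the generating function $f(x)=\sum_{n\ge 0}c_nx^n$, and then read off all three assertions. Multiplying \eqref{E.recurrence} by $x^{n}$ and summing over all $n\in\Z$ — which is legitimate because, for the initial data \eqref{E.initial}, the recurrence holds for \emph{every} integer $n$ once one sets $c_m=0$ for $m<0$, so that no polynomial ``correction'' survives — one converts the recurrence, using $\sum nc_nx^n=xf'$ and $\sum c_{n+k}x^n=x^{-k}f$, into the homogeneous first–order equation
\[
x(2x+1)(x^4+2x^3+x^2+4)\,f'=(x^4+x^3+16x+4)\,f .
\]
With $Q(x):=x^4+2x^3+x^2+4=(x^2+x)^2+4$, partial fractions give $f'/f=\frac1x+\frac2{2x+1}-\frac12\,Q'/Q$, so integrating and matching the leading term $f=x+\cdots$ (recall $c_1=1$) yields
\[
f(x)=\frac{2x(2x+1)}{\sqrt{Q(x)}}=\frac{2x(2x+1)}{\sqrt{(x^2+x)^2+4}} .
\]
(Equivalently, one checks directly that this series satisfies the displayed ODE and has $c_0,\dots,c_4$ as in \eqref{E.initial}, which identifies it with $f$ by uniqueness of the solution.)

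The first two claims now follow at once. If $f\in\Q(x)$ then $\sqrt Q=2x(2x+1)/f\in\Q(x)$, impossible since $Q$ is a squarefree polynomial of positive degree (its gcd with $Q'=2x(2x+1)(x+1)$ is $1$), hence not a square in $\Q(x)$; therefore $f$ is not rational and $(c_n)$ satisfies no linear recurrence with constant coefficients. For the denominators, expand
\[
\frac1{\sqrt Q}=\frac12\Bigl(1+\tfrac{(x^2+x)^2}{4}\Bigr)^{-1/2}
=\frac12\sum_{k\ge0}\binom{-1/2}{k}\frac{(x^2+x)^{2k}}{4^k}
=\frac12\sum_{k\ge0}\frac{(-1)^k}{16^k}\binom{2k}{k}(x^2+x)^{2k},
\]
which lies in $\Z[1/2][[x]]$ because $\binom{2k}{k}\in\Z$ and $(x^2+x)^{2k}\in\Z[x]$; hence $f=2x(2x+1)\,Q^{-1/2}\in\Z[1/2][[x]]$, i.e.\ every $c_n$ has denominator a power of $2$.

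For the congruences \eqref{E.congr} the idea is to write $f$ as $x\,g'/g$ with $g$ an integral power series. Put
\[
h(x):=\tfrac12\bigl((x^2+x)+\sqrt{Q(x)}\bigr),\qquad g:=h^2 .
\]
Using $Q=(x^2+x)^2+4$ one computes $h'/h=(2x+1)/\sqrt Q$, hence $g'/g=2h'/h=2(2x+1)/\sqrt Q=f/x$, so that
\[
f=x\,\frac{g'}{g},\qquad\text{equivalently}\qquad \sum_{n\ge1}\frac{c_n}{n}\,x^n=\log g=2\log\frac{(x^2+x)+\sqrt{Q(x)}}{2}.
\]
Moreover $h$ is the root $\equiv1\pmod x$ of $T^2-(x^2+x)T-1$, a monic polynomial over $\Z[1/2][[x]]$ whose reduction mod $x$ is $T^2-1=(T-1)(T+1)$ with coprime factors over $\Z[1/2]$ (their difference $2$ is a unit there); by Hensel's lemma over the $x$-adically complete ring $\Z[1/2][[x]]$ we get $h\in 1+x\,\Z[1/2][[x]]$, hence $g\in 1+x\,\Z_p[[x]]$ for every odd prime $p$. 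It then suffices to invoke the elementary fact that for any $g\in 1+x\,\Z_p[[x]]$ the coefficients $a_N$ of $x\,g'/g$ satisfy the Gauss congruences $a_{kp^{r+1}}\equiv a_{kp^r}\pmod{p^{r+1}}$: writing $g=\prod_{m\ge1}(1-x^m)^{-w_m}$ uniquely with $w_m\in\Z_p$ gives $x\,g'/g=\sum_N\bigl(\sum_{m\mid N}m\,w_m\bigr)x^N$, so that for $n$ with $p^s\mid n$ one has $a_n-a_{n/p}=\sum_{m\mid n,\ m\nmid n/p}m\,w_m$, where every such $m$ has $v_p(m)=v_p(n)\ge s$, whence $a_n-a_{n/p}\in p^s\Z_p$. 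Applying this with $g=h^2$, and using $c_n\in\Z[1/2]\subseteq\Z_p$, yields \eqref{E.congr} for all odd $p$.

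The bookkeeping turning the recurrence into the ODE, and its integration, are mechanical but a little lengthy; the one genuinely non-obvious point — the heart of the matter — is guessing the auxiliary series $g$, i.e.\ recognizing that the ``period'' $\int f\,\frac{dx}{x}$ is the elementary integral $2\log\!\bigl(\tfrac{(x^2+x)+\sqrt Q}{2}\bigr)$ (of the third kind, with integer residues), whose exponential $g=h^2\in 1+x\,\Z[1/2][[x]]$ has denominators only at $2$. Once $f=x\,g'/g$ with $g$ integral away from $2$ is in hand, both the power-of-$2$ denominators and the supercongruences drop out of the standard Witt/necklace computation.
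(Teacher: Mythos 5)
Your proof is correct and follows essentially the same route as the paper: convert the recurrence into the first-order ODE, solve it to get $s(x)=2x(2x+1)/\sqrt{Q(x)}$ (giving non-rationality and the power-of-$2$ denominators), and then recognize $s$ as $x$ times the logarithmic derivative of the $2$-integral series $z=x^2+x+\sqrt{Q}$ (your $g=h^2$ is just $z^2/4$), so that the Gauss congruences follow from the $\prod(1-x^m)^{a_m}$ decomposition over $\Z_p$. The only cosmetic differences are your use of Hensel's lemma in place of the paper's explicit binomial expansion to certify $h\in\Z[1/2][[x]]$, and working with $h^2$ so as to absorb the factor $2$ directly.
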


\medskip

As to the integrality assertion, the proof shall easily yield that $2^{2n-3}c_n$ is an integer; also, for infinitely many indices $n$,  the exponent $2n-3$ cannot be  replaced by a smaller number (for this more precise conclusion, see  \S \ref{SS.explicit}). 

As to the congruences, note for instance the special case  $c_p\equiv c_1\pmod p$, already somewhat surprising (at least for us). Also,  this especially reminds of Fermat's   Little Theorem, which indeed  implies   the same congruences (\ref{E.congr})   if  $c_n$ is replaced for instance by an exponential sum  
$e_n:=\sum_{i=1}^ha_ib_i^n$, where $a_i,b_i$ are given integers;   the  $e_n$ 
then satisfy a linear recurrence with constant coefficients. However, the first statement in the theorem implies that  the present sequence is not of this especially simple type. 
In such exponential cases, it turns out that actually stronger congruences hold, and for instance  the sequence $(e_n)$  is  eventually periodic modulo $p$ for every prime.  We shall note that, on the contrary, this periodicity does not occur for the $c_n$. Nevertheless, other    congruences hold, as for instance $6c_{kp+4}+c_{kp+2}+c_{kp+1}\equiv 0\pmod p$ for all $k\in\N$, or also $c_{kp-2}+c_{kp-3}\equiv 0\pmod p$. These and other similar ones can be derived very simply as in Example \ref{E.modp} below.
As   we shall point out in \S \ref{SS.cartier}, they do not hold in stronger form, as e.g., $c_{kp+i}\equiv c_i\pmod p$ for $i=1,2,4$.

\medskip 

It seems natural to ask what happens with  other initial data; let us study  this issue. Note that if we 
start with $a, b, 2b, -b/8, -b/2$, where $a,b\in\Q$, the resulting sequence will have $bc_n$ in place of $c_n$ for $n\ge 1$, so we shall continue to have rational numbers with denominator   dividing a constant times a power of $2$, and the congruences shall continue to hold provided $p$ does not divide the denominator of $b$.





On the other hand, on trying with still other initial values the resulting sequence seems  to carry increasing primes in the denominators, as should be expected. 
 We shall    confirm this expectation,  by proving (in particular) that   for   initial data 
different from the said ones the resulting denominators carry infinitely many primes (actually a set of positive upper asymptotic density). In this we shall use a rather deep theorem in the context of $G$-functions.


However, we shall easily see that, no matter the (rational) initial data, the denominators increase at most exponentially, rather than  factorials.  

We shall give actually more precise conclusions, which can be regarded as complements to  the above theorem.


We have the following results, where $(C_n)_{n\in\N}$ denotes a sequence of rational numbers satisfying (\ref{E.recurrence}). 


\medskip

\begin{thm} \label{T.converse}  (i) 
There exists an integer $d\neq 0$ such that the  common denominator  of $d4^{2m}C_m$  for $m\le n$, divides  the least common multiple of the integers up to $n-1$. 

(ii)  If  the vector $(C_1,C_2,C_3,C_4)$ is not 
proportional to $(1,2,-1/8,-1/2)$, the set of primes $p$ such that the sequence $(C_n)$ is $p$-integral has not density $1$ (i.e.,  the complement has positive upper asymptotic density).

\end{thm}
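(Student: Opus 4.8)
The plan is to realize the recurrence \eqref{E.recurrence} as coming from a linear differential operator $L$ of order $5$ with polynomial coefficients acting on the generating function $y(x)=\sum_n C_nx^n$: indeed, multiplying \eqref{E.recurrence} by $x^{n+4}$ and summing gives $L y = 0$ with $L \in \Q[x][d/dx]$, the singularities of $L$ being the roots of the leading coefficient (which, after reorganizing, will be $x=0$, $x$ a root of a fixed quadratic, and $x=\infty$), together with possibly apparent ones. For part (i), the point is that $L$ has only \emph{regular} singularities with rational exponents, so it is fuchsian; by the classical Fuchs--Frobenius theory the formal power series solutions have coefficients whose denominators grow in a controlled way. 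Concretely I would run the standard majorant/recurrence estimate: solving \eqref{E.recurrence} for $C_{n+5}$ introduces a denominator $4(n+4)$, so iterating naively one picks up $4^{n}(n+4)!/\text{stuff}$; but the mismatch in the degrees of the polynomial coefficients (all of degree $\le 1$) forces massive cancellation. The clean way to see it: the local exponents at the finite singular points are rational, and the Katz/Chudnovsky-type bound (or simply a direct induction dividing out the factorial part) shows that for a suitable fixed $d\neq 0$ the numbers $d\,4^{2m}C_m$ have, as common denominator for $m\le n$, a divisor of $\mathrm{lcm}(1,2,\dots,n-1)$ — the $\mathrm{lcm}$ rather than a factorial being exactly the signature of a globally nilpotent / fuchsian operator (a $G$-operator). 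I would carry out the induction explicitly, tracking that the only prime powers that can enter at step $n$ are bounded by $n-1$.

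For part (ii), I would argue by contradiction: suppose $(C_n)$ is $p$-integral for a set of primes of density $1$. Combined with part (i) (which gives the exponential, in fact $G$-function-type, denominator bound) and the observation that the radius of convergence of $y(x)$ at $0$ is positive and finite, this puts $y(x)$ in the class of $G$-functions. Now I invoke André's theorem (the deep input advertised in the abstract): a $G$-function that is a solution of a fuchsian differential operator, and which is $p$-integral for almost all $p$, is \emph{globally bounded}, and more to the point André's refinement of the Grothendieck $p$-curvature philosophy — a $G$-function solution of a minimal operator whose reductions mod $p$ have vanishing $p$-curvature for almost all $p$ — forces the solution to be \emph{algebraic} over $\Q(x)$. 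So $y(x)$ would be an algebraic function. But the minimal operator annihilating the space of solutions of \eqref{E.recurrence} is, by the analysis underlying Theorem \ref{T.congr}, an order-$5$ (or at least order $>1$) operator attached to a genuinely transcendental situation — essentially a pullback of the Gauss hypergeometric/elliptic situation discussed in the comments — and only the one-dimensional subspace $\Q\cdot(c_n)$ (equivalently the initial vector proportional to $(1,2,-1/8,-1/2)$) corresponds to an algebraic solution; any $(C_n)$ not proportional to it spans, together with $(c_n)$, a two-dimensional solution space on which the monodromy acts non-trivially, hence is not algebraic. This contradiction proves (ii).

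The main obstacle is the last step: ruling out algebraicity of $y(x)$ for the "other" initial data, i.e. proving that the algebraic solutions of $L$ form exactly a line. I would handle this via the monodromy of $L$: the local exponent differences at the regular singular points, read off from the indicial equations of \eqref{E.recurrence}, show that the projective monodromy group is infinite (not finite), so by the Schwarz–Klein criterion the full $5$-dimensional solution space contains no algebraic element beyond those forced by a factorization of $L$; and $L$ factors (over $\bar\Q(x)$) with exactly one order-one right factor $\partial - g'/g$ with $g$ algebraic — precisely the factor producing $(c_n)$. Equivalently, and perhaps more cleanly, I can use the congruences themselves: if some $C_n$ with $(C_1,C_2,C_3,C_4)$ not proportional to $(1,2,-1/8,-1/2)$ were $p$-integral for density-$1$ many $p$, then so would a suitable $\Q$-linear combination $C_n' = C_n - \lambda c_n$ with leading nonzero coordinate normalized, reducing us to showing that a \emph{non-algebraic} $G$-function solution of this specific fuchsian operator cannot be $p$-integral for almost all $p$ — which is exactly the Grothendieck-conjecture instance that André's theorem resolves for operators of this type. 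I will need to verify the hypotheses of André's theorem carefully (the operator is a $G$-operator: this follows once (i) gives the $\mathrm{lcm}$-type denominators, since that is the Bombieri–Dwork–André characterization), and then the density statement ("positive upper density of bad primes") comes out of the quantitative version: the set of primes of $p$-integrality of a transcendental $G$-function has density strictly less than $1$.
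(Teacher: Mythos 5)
There is a genuine gap, and it is structural: you never identify the differential equation actually satisfied by $S(x)=\sum C_nx^n$. Because the coefficients of \eqref{E.recurrence} are polynomials of degree $1$ in $n$, multiplying by $x^n$ and summing produces a \emph{first-order inhomogeneous} equation $A(x)S'(x)-B(x)S(x)=R(x)$ (with $R$ a polynomial of degree $\le 4$ encoding the initial data), not an order-$5$ operator; the homogeneous part is exactly the equation solved by the algebraic function $s(x)$ of Theorem \ref{T.congr}. The whole proof rests on writing $S=f\cdot s$ and observing that $f'=R/(As)$ is \emph{algebraic}. For (i), your claimed implication ``Fuchsian with rational exponents $\Rightarrow$ $\mathrm{lcm}$-type denominators'' is not a theorem (that is essentially the Bombieri--Dwork direction, not a consequence of regularity), and the ``direct induction'' does not by itself reveal the cancellation. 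What actually yields the $\mathrm{lcm}(1,\dots,n-1)$ bound is Eisenstein's theorem applied to the algebraic series $f'$ (denominators dividing $d\,4^{2n}$), followed by the single division by $n$ that integration introduces; then $C_n=\sum f_ic_{n-i}$ transfers the bound to $C_n$.

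For (ii), the version of Andr\'e's theorem you invoke is false as stated: there exist transcendental globally bounded $G$-functions satisfying Fuchsian $G$-operators ($p$-integral for all $p$), e.g.\ hypergeometric series and the Ap\'ery generating functions, which the paper itself cites; and the Grothendieck $p$-curvature conjecture is not known for general second-order operators, nor does $p$-integrality of the \emph{single} solution $S$ give vanishing $p$-curvature. The proven case that is actually needed is the Chudnovsky--Andr\'e theorem on P\'olya's question (\cite{[A2]}, Prop.~6.2.1): a power series with $p$-integral coefficients outside a density-zero set of primes whose \emph{derivative} is algebraic is itself algebraic. This applies to $f=S/s$ precisely because $f'$ is algebraic (equivalently, because $\d f$ is a differential form on the elliptic curve $y^2=Q(x)$ that would be exact mod $p$ for density-$1$ many $p$). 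One then still must prove that $f$ (equivalently $S$) is \emph{not} algebraic unless $(C_1,\dots,C_4)$ is proportional to $(1,2,-1/8,-1/2)$; your monodromy sketch is applied to the wrong operator, whereas the paper settles this by a short Galois-cocycle argument reducing to a rational $S_1=S+cs$ and then excluding all possible poles of $S_1$ via the equation $AS_1'-BS_1=R$.
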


By `positive upper asymptotic density' we mean that there are a positive $c$ and   intervals $[1,T]$ with arbitrarily large $T$ which contain $>c T/\log T$ relevant primes.

\medskip

As we shall see, the proof of part (ii) relies on a certain deep theorem of Y. Andr\'e; in personal communication, he pointed out to us that  his proof in fact leads to the conclusion that the  upper density of the  relevant set of primes is $\le 1/2$. Probably,  an even stronger conclusion 
holds, perhaps even of finiteness,   or at any rate `half-way', with the   set of primes such that the $C_n$ are $p$-integral having density $0$. However we do not know how to prove any of these more precise statements.  See below for some results derived from  an analysis modulo $p$.
Also, in the last section we shall   discuss this issue in a little more depth. 
We shall also point out effectivity issues; for instance, for given rational initial data $C_0,\ldots ,C_4$, it does not seem easy to decide effectively and in general whether the $C_n$ so obtained are $p$-adically integral. 

As to conclusion (i), recall that the Prime Number  Theorem  (see \cite{[I]}) implies that the least common multiple of $1,2,\ldots ,n$ grows like $\exp(n+o(n))$, which clearly implies in particular the above alluded  exponential upper bound for the denominators of the $C_n$.  Thinking of quantitative lower bounds, the proof-argument for part (ii) of the theorem shall be also shown to  imply  the following  estimate, in the opposite direction compared to  (i). 

\begin{prop}\label{C.lowerbound}
 If  the vector $(C_1,C_2,C_3,C_4)$ is not proportional to $(1,2,-1/8,-1/2)$, there exist a number $a>1$ and infinitely many $n$ such that the product of primes dividing  the least common denominator of $C_1,\ldots ,C_n$ is $> a^n$. 
 
\end{prop}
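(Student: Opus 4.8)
The plan is to quantify the argument behind Theorem~\ref{T.converse}(ii). Write $Y(x)=\sum_{n\ge0}C_nx^n$; as used there, $Y$ is a $G$-function — part (i) bounds its denominators and the archimedean growth is elementary — and the hypothesis that $(C_1,C_2,C_3,C_4)$ is not proportional to $(1,2,-1/8,-1/2)$ forces $Y$ to be non-algebraic, the algebraic solutions of the linear differential operator attached to \eqref{E.recurrence} being exhausted, modulo constants, by the scalar multiples of $\sum_nc_nx^n$. Hence the theorem of Y.~André invoked for Theorem~\ref{T.converse}(ii) applies and shows that the set $P$ of primes dividing the denominator of some $C_m$ has positive upper asymptotic density: there are $c>0$ and arbitrarily large $T$ with $\#(P\cap[1,T])\ge cT/\log T$.

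The first step is to pass from this counting statement to a logarithmic one. For each fixed $\delta>0$ one has $\pi(T^{1-\delta})=o(T/\log T)$, so at least $(c-o(1))T/\log T$ of the primes of $P$ in $[1,T]$ exceed $T^{1-\delta}$; summing their logarithms gives $\sum_{p\in P,\ p\le T}\log p\ge(1-\delta)(c-o(1))T$, and letting $\delta\to0$ we obtain $\sum_{p\in P,\ p\le T}\log p\gg T$, that is $\prod_{p\in P,\ p\le T}p\ge e^{c'T}$ for a fixed $c'>0$, along the chosen unbounded sequence of $T$'s. This part is only Chebyshev--Mertens bookkeeping.

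The substantive step is to bound, for these primes $p$, the first index $m_p$ at which $p$ divides the denominator of $C_{m_p}$, and to do so by $O(p)$. From \eqref{E.recurrence} the term $C_m$ is obtained from the previous five by dividing by $4(m-1)$, so an odd prime $p$ can enter a denominator only at indices $m\equiv1\pmod p$; together with part (i) this already yields $m_p\ge p+1$ (for $p$ not dividing the fixed integer $d$), the first opportunity being $m=p+1$. What is needed is the matching upper bound $m_p=O(p)$, equivalently that for the primes produced by André's theorem the obstruction to $p$-integrality surfaces at a bounded multiple of $p$. I would extract this from the $p$-curvature, i.e.\ Cartier-operator, description underlying the proof of Theorem~\ref{T.converse}(ii): in characteristic $p$ the qualitative behaviour of the recurrence is already decided by its first $\sim p$ terms — through the Frobenius, equivalently the Cartier operator on the associated differential form — so the non-triviality of the relevant $p$-curvature, which is exactly what non-algebraicity of $Y$ supplies, forces $p$ into the denominator of some $C_m$ with $m-1$ a bounded multiple of $p$ (one expects even $m_p=p+1$ for a positive-density subset of these primes). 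Making this uniformity explicit — turning André's qualitative dichotomy into an effective bound $m_p=O(p)$ — is the step I expect to be the main obstacle.

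Granting $m_p\le Kp$ for the primes of $P$ in $[1,T]$, put $n:=\max\{m_p:p\in P,\ p\le T\}\le KT$. Then every prime of $P\cap[1,T]$ divides the least common denominator of $C_1,\dots,C_n$, so the product of the primes dividing that denominator is at least $\prod_{p\in P,\ p\le T}p\ge e^{c'T}\ge e^{c'n/K}=(e^{c'/K})^n$. Fixing $a$ with $1<a<e^{c'/K}$ and letting $T$, hence $n$, run through the unbounded sequence yields infinitely many $n$ with the product of the primes dividing the least common denominator of $C_1,\dots,C_n$ exceeding $a^n$, which is the assertion.
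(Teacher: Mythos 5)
Your outline matches the paper's strategy (positive density of ``bad'' primes from Andr\'e's theorem, plus an $O(p)$ bound on the first index where $p$ enters a denominator, plus Chebyshev bookkeeping), but the one step you yourself flag as ``the main obstacle'' is precisely the substantive content of the proof, and you leave it unproved. Worse, the bound $m_p\le Kp$ is applied to the wrong set of primes: you take $P$ to be the set of primes dividing the denominator of \emph{some} $C_m$, i.e.\ the primes for which the relevant differential is not exact in $\Z_p[[x]]$. For that set the claim $m_p=O(p)$ is in general \emph{false}: as Remark~\ref{R.estimate} and Example~\ref{Ex.F_p-Z_p} point out, there are differentials (e.g.\ a form of the first kind on an elliptic curve at a supersingular prime) that are exact in $\F_p[[x]]$ but not in $\Z_p[[x]]$, and for which the first occurrence of $p$ in a denominator of the integral is at a term $x^{p^2}$. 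So ``granting $m_p\le Kp$'' for all $p\in P$ is not a gap one can close; the hypothesis has to be strengthened.

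The repair is exactly what the paper does. One works instead with the set $\mathcal P$ of primes for which the \emph{reduction modulo $p$} of the form $\xi={\tilde R(x)\over 2(1+2x)^2}\omega$ of \eqref{E.derivative} is not exact in $\F_p[[x]]$. Since $S(x)$ is transcendental (Lemma~\ref{L.algebraic}), $\xi$ is not exact over $\Q$, and the Chudnovsky--Andr\'e theorem applied directly to $\xi$ shows that $\mathcal P$ has positive upper density; this is a sharper input than quoting Theorem~\ref{T.converse}(ii), which only controls the larger set $P$. For $p\in\mathcal P$ the bound $n\le l\cdot p$ is Proposition~\ref{P.estimate}, proved by an actual argument, not an appeal to ``$p$-curvature decides everything in the first $\sim p$ terms'': if $C_n$ were $p$-integral for all $n\le n_p$, then $g^{(p-1)}$ (with $g=f'$ algebraic) would vanish at the origin modulo $p$ to order $\ge n_p-p$, while its degree as a rational function on the reduced curve is $\le l_4p$ by an iteration of the inequality $\deg(h')\le\deg(h)+m_h+O(1)$ valid modulo all large $p$; hence either $g^{(p-1)}\equiv 0$ (i.e.\ $\xi$ is exact mod $p$, excluded) or $n_p\le(l_4+1)p$. (Proposition~\ref{P.estimate2} gives an alternative, explicit bound via the Cartier operator.) With $\mathcal P$ in place of $P$ and this proposition supplying the constant, your final bookkeeping goes through and is the same as the paper's. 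As it stands, though, your argument is missing its key lemma and rests on a quantitative claim that fails for the set of primes you chose.
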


Note that   the first assertion  represents indeed a sharpening of the  second conclusion in the previous theorem; in fact, by elementary prime number estimates (for which we again refer to \cite{[I]}),  the logarithm of the product of primes up to $T$ in a set of zero density is $o(T)$. 

\bigskip




\noindent\underline{\sc The case modulo $p$.} The  results expressed by these last two statements concern integrality properties of solutions of the recurrence  \eqref{E.recurrence} in  rational numbers. But we can also study what happens for solutions  over $\F_p$;  this may be of interest in itself and, through reduction, might also give further  informations  on integrality in characteristic zero. 

 Let $(\bar C_n)_{n\in\N}$ be a sequence over $\F_p$, satisfying \eqref{E.recurrence}; these sequences make up a vector space over $\F_p$, denoted $W_p$ in what follows. \footnote{ Generally, in the paper we shall denote with a bar coefficients in $\F_p$, even if they are not necessarily the reductions of corresponding rational numbers satisfying the recurrence.}

  In this  case of positive characteristic   the situation is different in (at least) two points: first of all,    the initial data might not determine the solution uniquely, since we have a free choice for $\bar C_n$ each time it happens that $p|n-1$; indeed, we shall easily see that $W_p$  is infinite-dimensional over $\F_p$.  But it is worth noticing that the values $\bar C_1,\ldots ,\bar C_4)$ determine uniquely the $\bar C_n$ for $n=1,\ldots ,p$.

  Secondly, if $p|n-1$, the values $\bar C_5,\ldots ,\bar C_{n-1}$ obtained so far (starting from the initial values $\bar C_0,\ldots ,\bar C_4$ and performing the said `free choices') might turn out to be incompatible, since they have to satisfy $16\bar C_{n-1}+\bar C_{n-2}+9\bar C_{n-3}+16\bar C_{n-4}+8\bar C_{n-5}=0$; hence some  initial data {\it a priori} may not determine a solution at all.  We observe again that they determine the entries $\bar C_5,\ldots ,\bar C_p$.

For sequences modulo $p$, these observations are confirmed, in particular, by  the following result:

\begin{thm}\label{T.modp} The vector space $W_p$ consisting of the   sequences $(\bar C_n)_{n\in\N}$   over $\F_p$ satisfying \eqref{E.recurrence} has infinite dimension.  For  $p\neq 2, 3, 5, 13$,   its image $V_p$ under the projection $(\bar C_n)\to (\bar C_1,\ldots ,\bar C_4)\in\F_p^4$ has dimension $2$ and is contained in the hyperplace of $\F_p^4$ defined by  $6\bar C_4+\bar C_2+\bar C_1=0$. Also, $V_p$  is generated by $(1,2,-1/8,-1/2)$ and  $(\bar c_{p+1},\bar c_{p+2},\bar c_{p+3}, \bar c_{p+4})$ (i.e. the reduction of $(c_{p+1},\ldots ,c_{p+4})$).  \footnote{ The exclusion of $2,5,13$ shall become clear in the sequel. For these primes the result is not true and the condition becomes different; for instance, for $p=5$ it is that $\bar C_3-\bar C_2-\bar C_1=0$ in $\F_5$. We leave it to the interested reader to carry out the analysis in these cases.} 

\end{thm}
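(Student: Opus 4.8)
The plan is to analyse the recurrence \eqref{E.recurrence} as a ``forward'' propagation over $\F_p$, tracking exactly when free choices and obstructions occur. First I would note that for $0\le n\le p-6$ the leading coefficient $4(n+4)$ is a unit in $\F_p$ precisely unless $n\equiv -4\pmod p$; since we start from $(\bar C_0,\dots,\bar C_4)$ and $\bar C_0$ is irrelevant, the entries $\bar C_5,\dots,\bar C_p$ are uniquely determined by $(\bar C_1,\dots,\bar C_4)$ — this is the elementary observation already flagged in the text, and I would record it as a lemma. Then the first ``critical index'' is $n=p-1$, where the coefficient of $\gamma_{n+5}=\gamma_{p+4}$ vanishes mod $p$ and, reading \eqref{E.recurrence} at $n=p-1$, one gets the linear constraint $16\bar C_{p-1}+\bar C_{p-2}+9\bar C_{p-3}+16\bar C_{p-4}+8\bar C_{p-5}=0$ among already-computed quantities. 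Expressing each of $\bar C_{p-5},\dots,\bar C_{p-1}$ as an explicit $\F_p$-linear function of $(\bar C_1,\dots,\bar C_4)$ turns this into a single linear equation on $\F_p^4$; the claim is that for $p\neq 2,3,5,13$ this equation is (a nonzero multiple of) $6\bar C_4+\bar C_2+\bar C_1=0$, cutting $V_p$ down to a hyperplane. That $V_p$ has dimension exactly $2$ rather than $3$ should then follow by producing a \emph{second} independent constraint: I expect that propagating past $n=p-1$ (making the one free choice of $\bar C_{p+4}$, continuing to the next critical index, and demanding global consistency, or simply using a second reduction like the mentioned $c_{kp-2}+c_{kp-3}\equiv0$) forces a further relation, so that $\dim V_p=2$ with the exhibited generators. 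The identification of the generators is then a matter of checking: $(1,2,-1/8,-1/2)$ is in $V_p$ because the rational sequence $(c_n)$ reduces into $W_p$, and $(\bar c_{p+1},\dots,\bar c_{p+4})$ is in $V_p$ by shifting the index of the $\F_p$-sequence by $p$ (one must check this shift maps $W_p$ to $W_p$, using how the coefficients of \eqref{E.recurrence} behave under $n\mapsto n+p$ mod $p$); that these two vectors are independent for $p\neq 2,3,5,13$ is the residual finite-prime bookkeeping, and one checks $(1,2,-1/8,-1/2)$ is not proportional to the shifted vector by, e.g., comparing a single coordinate ratio.

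To get $\dim W_p=\infty$, the key point is that inside the block of indices $[N p, (N+1)p)$ for each $N\ge1$ there is (at least) one index with $p\mid n-1$, i.e.\ $n=Np+1$, at which $\bar C_{Np+5}$ is a free parameter; so $W_p$ surjects onto an infinite product of copies of $\F_p$ (one free coordinate per block), hence is infinite-dimensional. To make this rigorous I would show that starting from any consistent partial data one can always continue: the only obstruction at $n=Np-1$ is one linear equation, and the free choice at the preceding index $n=(N-1)p+1$ (or the $4$-dimensional freedom at the very start) is more than enough to satisfy it, so solutions exist for all $N$ and the free choices are genuinely unconstrained in the limit. Combining: $W_p$ is infinite-dimensional, while its image $V_p$ in $\F_p^4$ is $2$-dimensional and sits in the stated hyperplane.

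The main obstacle I anticipate is the computation that the constraint coming from the critical index $n=p-1$ collapses, uniformly in $p$, to the clean hyperplane $6\bar C_4+\bar C_2+\bar C_1=0$ and that a second independent relation genuinely appears (so that the image is a plane, not a hyperplane). Concretely this means writing the ``transfer matrix'' that sends $(\bar C_{n-4},\dots,\bar C_n)$ to $(\bar C_{n-3},\dots,\bar C_{n+1})$, iterating it from $n=4$ to $n=p-1$, and extracting two rows of the resulting $\F_p$-linear map on $\F_p^4$. The cleanest route is probably \emph{not} brute iteration but recognizing the generating function / differential-equation structure hinted at in the introduction: the $c_n$ are Taylor coefficients of an algebraic (or at least globally bounded) function, and the mod-$p$ behaviour of such coefficients (Cartier operator, $\bmod\ p$ Grothendieck–Katz type phenomena) forces exactly this kind of two-dimensional image with a linear relation; invoking that structure would let one read off both the hyperplane and the second relation without the explicit $p$-long recursion, and would also make transparent why precisely the primes $2,3,5,13$ (divisors of the relevant discriminant-like quantities, as the footnote promises) must be excluded. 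I would therefore organise the proof around that structural interpretation, falling back on the direct linear-algebra computation only to pin down the exceptional primes and the explicit generator $(1,2,-1/8,-1/2)$.
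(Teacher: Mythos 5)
Your combinatorial skeleton (free choice of $\bar C_m$ exactly when $m\equiv 1\pmod p$, one linear obstruction per block of $p$ indices, $\bar C_5,\ldots,\bar C_p$ determined by the initial data) is the right picture and matches the remarks preceding the statement, but the proposal defers precisely the three points where the content of the theorem lies, and none of them is routine. (As a small matter of bookkeeping, the first critical recurrence index is $n\equiv-4\pmod p$, so the obstruction constrains $\bar C_{p-4},\ldots,\bar C_{p}$, not $\bar C_{p-5},\ldots,\bar C_{p-1}$.) First, the assertion that this obstruction collapses, uniformly in $p$, to a nonzero multiple of $6\bar C_4+\bar C_2+\bar C_1$ cannot be verified by iterating a transfer matrix $p$ times; the paper obtains it by passing to the generating function, recognizing via \eqref{E.derivative} that $\d(\bar S/\bar s)$ is the exact differential ${\tilde R(x)\over 2(1+2x)^2}\omega$ on the curve $y^2=Q(x)$, descending exactness from $\F_p[[x]]$ to the function field by Proposition \ref{T.honda-katz}, and computing that the residue at $x=-1/2$ is $\tilde R'(-1/2)$ up to a unit. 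Second, and most seriously, ``I expect a further relation'' is not an argument for $\dim V_p\le 2$: after the first obstruction a new free parameter $\bar C_{p+1}$ appears, and the obstruction in the next block could a priori be absorbed by it, leaving $V_p$ three-dimensional. The paper rules this out with the Cartier operator: $C(\xi)=0$, $C(\omega)=\alpha\omega$, $C(\eta)=\beta\omega$, together with the genuinely nontrivial fact (Proposition \ref{P.alphabeta2}, proved via a hypergeometric-polynomial identity in Legendre form, i.e.\ a Hasse-invariant argument) that $(\alpha_p,\beta_p)\neq(0,0)$; this is what manufactures a second linear form, independent of the first, vanishing on $V_p$. Nothing in your outline supplies this input, and your gesture toward ``Cartier operator / Grothendieck--Katz phenomena'' names the tool without performing the step that carries the weight.

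Third, the lower bound $\dim V_p\ge 2$ is not ``residual finite-prime bookkeeping'': one must show for \emph{every} $p\neq 2,3,5,13$ that $(\bar c_{p+1},\ldots,\bar c_{p+4})$ is not proportional to $(1,2,-1/8,-1/2)$, and comparing a coordinate ratio is a check one can do for a single $p$, not uniformly. The paper's argument is again structural: proportionality would force $x\sigma_0(x)$ (with $\sigma_0$ the degree-$\le p-1$ initial block of $\bar s$) to be a polynomial solution of \eqref{E.diff.eq.} mod $p$ of degree $\le p$, whereas the solution space over $\F_p(x^p)$ is one-dimensional, spanned by $x(2x+1)Q(x)^{(p-1)/2}$ of degree $2p$ with no root multiplicity divisible by $p$ --- a contradiction. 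Finally, your infinite-dimensionality argument has the same absorption gap as the second point (you must prove each block's obstruction is solvable given the earlier free choices, which is not automatic); the paper sidesteps this by exhibiting the explicit infinite family $\lambda(x^p)\bar s(x)$, $\lambda\in\F_p[[x]]$, inside $W_p$. In short, the outline correctly describes \emph{what} must be proved, but each of the three substantive claims requires a specific structural input (the residue computation, the nonvanishing of $(\alpha_p,\beta_p)$, and the uniqueness of the mod-$p$ polynomial solution) that the proposal does not establish.
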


 In particular, we see that, at least for primes $\neq 2,3, 5,13$, there are always initial data $\bar C_1,\ldots ,\bar C_4$ not proportional to $(1,2,-1/8,-1/2)$ which may be extended to a sequence  over $\F_p$   satisfying \eqref{E.recurrence}. 
 
 \medskip

This  result  yields in particular the following corollary, partly in the direction of Theorem \ref{T.converse} and Proposition \ref{C.lowerbound}:

\begin{cor}\label{C.converse} Let $(C_n)_{n\in\N}$ be a sequence of rational numbers satisfying (\ref{E.recurrence}). If $6C_4+C_2+C_1\neq 0$, there are only finitely many primes such that the sequence consists of $p$-integers. Also, the estimate of Proposition \ref{C.lowerbound}  holds for all sufficiently large $n$.

\end{cor}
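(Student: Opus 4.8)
The plan is to derive the corollary from Theorem \ref{T.modp}: the hypothesis $6C_4+C_2+C_1\neq 0$ will force every prime outside an explicit finite set to appear in the denominator of some $C_m$ with $m$ of order $p$, so that the denominators of $C_1,\ldots,C_n$ carry essentially all primes up to a positive multiple of $n$. For the finiteness assertion, suppose $p$ is odd, $p\notin\{2,3,5,13\}$, and all the $C_n$ are $p$-integral. Reducing modulo $p$ yields a sequence $(\bar C_n)_{n\in\N}$ over $\F_p$ still satisfying \eqref{E.recurrence}, i.e.\ an element of $W_p$, so $(\bar C_1,\ldots,\bar C_4)\in V_p$. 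By Theorem \ref{T.modp}, $V_p$ is contained in the hyperplane $6\bar C_4+\bar C_2+\bar C_1=0$; hence $p$ divides the (nonzero) numerator $N$ of $6C_4+C_2+C_1$. Only finitely many primes can do this, and adjoining $2,3,5,13$ leaves a finite set.

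For the quantitative statement I would first establish the following local claim: \emph{there is an absolute constant $B$ (one may take $B=2$) such that every prime $p\nmid N$ with $p\notin\{2,3,5,13\}$ not dividing the denominators of $C_1,\ldots,C_4$ divides the denominator of some $C_m$ with $5\le m\le Bp$.} Indeed, for $5\le m\le p$ the passage from $C_{m-5},\ldots,C_{m-1}$ to $C_m$ in \eqref{E.recurrence} only involves dividing by $4(m-1)$, a $p$-adic unit, so $C_5,\ldots,C_p$ are $p$-integral and their reductions are exactly the forward iterates of $(\bar C_1,\ldots,\bar C_4)$. Since $6\bar C_4+\bar C_2+\bar C_1\neq 0$ we have $(\bar C_1,\ldots,\bar C_4)\notin V_p$, so this partial solution over $\F_p$ cannot be propagated indefinitely through the obstruction indices $p+1,2p+1,\ldots$; by the proof of Theorem \ref{T.modp} it already fails the obstruction at $p+1$, namely $16\bar C_p+\bar C_{p-1}+9\bar C_{p-2}+16\bar C_{p-3}+8\bar C_{p-4}=0$ (which for $p\notin\{2,3,5,13\}$ is equivalent to $6\bar C_4+\bar C_2+\bar C_1=0$). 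Now in \eqref{E.recurrence} with $n=p-4$ the coefficient of $C_{p+1}$ is $4p$, while the sum of the remaining five terms is $p$-integral and congruent to $-(16\bar C_p+\bar C_{p-1}+9\bar C_{p-2}+16\bar C_{p-3}+8\bar C_{p-4})\not\equiv 0\pmod p$; solving for $C_{p+1}$ then gives $v_p(C_{p+1})=-1$. If one prefers not to pin down the obstruction functional, the same mechanism at the next obstruction index $2p+1$, using that $C_{p+1},\ldots,C_{2p}$ are then $p$-integral, shows $p$ divides the denominator of $C_{2p+1}$; hence $B=2$ works unconditionally.

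It remains to count. For $n$ large, every prime $p\le n/B$ with $p\nmid N$ divides the least common denominator $D_n$ of $C_1,\ldots,C_n$: if $p$ already divides a denominator of $C_1,\ldots,C_4$ this is immediate, and otherwise (for $p\notin\{2,3,5,13\}$) it follows from the claim, the four primes $2,3,5,13$ contributing at worst a bounded factor. Therefore the product of the primes dividing $D_n$ is at least $\bigl(\prod_{p\le n/B}p\bigr)/N'$ with $N'$ a fixed positive integer. By Chebyshev's estimates (or the Prime Number Theorem, \cite{[I]}) one has $\sum_{p\le T}\log p\ge cT$ for some $c>0$ and all large $T$, so this product exceeds $a^n$ for any fixed $a$ with $1<a<e^{c/B}$ and all sufficiently large $n$. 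This is precisely the estimate of Proposition \ref{C.lowerbound}, now holding for all large $n$.

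The main obstacle is the middle step: extracting from the proof of Theorem \ref{T.modp} an effective $O(p)$ bound on the index at which a prime violating $6\bar C_4+\bar C_2+\bar C_1=0$ is driven into a denominator — ideally the identification of the first obstruction functional with a nonzero multiple of $6x_4+x_2+x_1$, so that $C_{p+1}$ itself carries $p$. Granting this, the $p$-adic valuation bookkeeping and the elementary prime-counting input are routine.
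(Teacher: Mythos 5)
Your first paragraph (the finiteness assertion) is correct and is exactly the paper's argument: $p$-integrality of all the $C_n$ puts the reduced sequence in $W_p$, hence $(\bar C_1,\ldots,\bar C_4)\in V_p$, which by Proposition \ref{P.modp} lies in the hyperplane $6\bar C_4+\bar C_2+\bar C_1=0$, so $p$ divides the numerator of $6C_4+C_2+C_1$. The closing Chebyshev count is also sound \emph{once} the local claim is granted with some absolute constant $B$. The genuine gap is the local claim itself, specifically the assertion that $B=2$ works. Your primary route needs the first obstruction functional $16\bar C_p+\bar C_{p-1}+9\bar C_{p-2}+16\bar C_{p-3}+8\bar C_{p-4}$, viewed as a linear form in $(\bar C_1,\ldots,\bar C_4)$, to be a nonzero multiple of $6\bar C_4+\bar C_2+\bar C_1$. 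Theorem \ref{T.modp} does not give this: it only says that $V_p$ --- the intersection of the kernels of \emph{all} the obstruction functionals at $p+1,\,2p+1,\ldots$ --- is $2$-dimensional and contained in that hyperplane; the kernel of the single first obstruction has dimension at least $3$ and need not coincide with the hyperplane. Your fallback at $2p+1$ is equally unproven: if the first obstruction vanishes on your data although the data is not in $V_p$, nothing forces the failure to occur already at $2p+1$ rather than at $3p+1$ or later; moreover the obstruction at $2p+1$ also involves $\bar C_{p+1}$, which for the characteristic-zero sequence is the reduction of $C_{p+1}$ and is not a linear function of $(\bar C_1,\ldots,\bar C_4)$ you have identified. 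So ``$v_p(C_m)<0$ for some $m\le 2p$'' is asserted, not established.

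The paper bypasses this recurrence-level bookkeeping entirely. Since $6C_4+C_2+C_1\neq 0$, the form $\frac{\tilde R(x)}{2(1+2x)^2}\omega$ of \eqref{E.derivative} has a nonzero residue at the points over $x=-1/2$ for all large $p$, hence is not exact modulo $p$ (Proposition \ref{P.residues}); Proposition \ref{P.estimate} --- proved via degree bounds on derivatives, and made explicit as $n\le 8p$ in Remark \ref{R.estimate2} via the Cartier operator --- then yields an index $n\le l\cdot p$ with $C_n$ not $p$-integral. Substituting this for your local claim (i.e., taking $B=l$ rather than $B=2$) repairs the argument, and your final counting step goes through verbatim with any $a$ satisfying $1<a<e^{c/l}$.
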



\medskip
 
 Concerning a proof of  a part of Theorem \ref{T.modp} we shall point out more than one approach, giving a further bit of information on  the structure of $V_p$.  For instance, using a deep result of Elkies  we shall observe that, given any   $\Z$-linear form independent of $6\bar C_4+\bar C_2+\bar C_1$, there are infinitely many primes such that this form does  not vanish on $V_p$; equivalently, for every vector subspace of $\Q^4$ of dimension $2$, there are infinitely many primes such that $V_p$ is not the reduction of this subspace. This yields in particular a sharpening of the Corollary and a completely different proof of a weak version of Theorem \ref{T.converse}(ii), in which `...has not density $1$' is replaced by `...has not finite complement'. Unfortunately  we cannot say much more  on 
 this set of primes; the issue seems relevant for a general question on Elliptic curves over finite fields, which we shall mention.

Also, contrary to the mentioned case of $p$-integrality in characteristic zero, one may effectively decide whether given intial data in $\F_p$   extend to a  whole sequence over  in $V_p$, i.e. whether they correspond to a(t least one)  sequence satisfying \eqref{E.recurrence}. See Remark \ref{R.estimate2}, and the last section, for further  comment and details on these issues.

\medskip


Now, in view of the congruences \eqref{E.congr}  satisfied by the $c_n$, it makes sense to ask whether, for other initial data 
and for the possible primes such that  all the $C_n$ are $p$-integral, such congruences continue to hold. Here we may prove that, as expected, the initial data producing  the $c_n$ are quite peculiar in this sense, even limiting to the case $r=0, k=1$ of \eqref{E.congr}; indeed, we have:




 \begin{thm}\label{T.union}   Let, for a prime $p\neq 2,3,5,13$,    a sequence $(\bar C_n)$, with $\bar C_n\in\F_p$, satisfy  the recurrence \eqref{E.recurrence}. 
 Then $\bar C_p=\bar C_1$ if and only if  
 $(\bar C_1,\bar C_2,\bar C_3,\bar C_4)$ is proportional to the reduction of $(1,2,-1/8,-1/2)$ modulo $p$.
 \end{thm}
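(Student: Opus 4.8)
The plan is to reduce the statement, via Theorem~\ref{T.modp}, to the comparison of two linear functionals on the $2$-dimensional space $V_p$, and thence to a single congruence for $c_{p+1}$ modulo $p$.

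Since solving \eqref{E.recurrence} for $\bar C_{n+5}$ requires dividing by $4(n+4)$, which is a unit of $\F_p$ whenever $5\le n+5\le p$, the entries $\bar C_5,\dots,\bar C_p$ are uniquely determined $\F_p$-linear functions of $(\bar C_1,\dots,\bar C_4)$; denote by $L$ the linear form $(\bar C_1,\dots,\bar C_4)\mapsto\bar C_p$, put $w$ for the reduction of $(1,2,-1/8,-1/2)$ and $v=(\bar c_{p+1},\bar c_{p+2},\bar c_{p+3},\bar c_{p+4})$. By hypothesis $(\bar C_1,\dots,\bar C_4)\in V_p$, and by Theorem~\ref{T.modp} the vectors $w,v$ form a basis of $V_p$, so the claim reduces to this: on the plane $V_p$, the forms $L$ and ``first coordinate'' agree exactly on the line $\F_p w$. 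I evaluate $L$ on the basis. The scalar multiples of $(\bar c_n)_{n}$ realize $\F_p w$, whence $L(w)=\bar c_p=\bar c_1=1$ by the case $k=1,\,r=0$ of \eqref{E.congr}. On the other hand the shifted sequence $(\bar c_{n+p})_{n\in\N}$ again satisfies \eqref{E.recurrence} --- each polynomial coefficient there is linear in $n$, hence unchanged in $\F_p$ under $n\mapsto n+p$ --- so it lies in $W_p$, has initial vector $v$, and has $p$-th term $\bar c_{2p}=\bar c_2=2$ by the case $k=2,\,r=0$ of \eqref{E.congr}; thus $L(v)=2$. Writing a generic element of $V_p$ as $\alpha w+\beta v$, the equality $\bar C_p=\bar C_1$ becomes $\alpha+2\beta=\alpha+\beta\,\bar c_{p+1}$, i.e. $\beta(\bar c_{p+1}-2)=0$ in $\F_p$.

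Consequently, granting Theorems~\ref{T.modp} and~\ref{T.congr}, the statement of the theorem is \emph{equivalent} to the non-congruence $c_{p+1}\not\equiv 2\pmod p$ (for $p\neq 2,3,5,13$): if instead $c_{p+1}\equiv 2$, then every vector of $V_p$ satisfies $\bar C_p=\bar C_1$ and the conclusion fails. This is indeed what occurs for $p=5$, where $c_6=-7/64\equiv 2\pmod 5$; the excluded primes $3,5,13$ (besides $2$, where \eqref{E.recurrence} degenerates) are precisely those where this non-congruence, or Theorem~\ref{T.modp} itself, breaks down. So the whole remaining content is the single statement $c_{p+1}\not\equiv 2\pmod p$.

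For this I would bring in the generating function $f(x)=\sum_{n\ge1}c_nx^n$. Plugging \eqref{E.recurrence} into it --- with the convention $c_m=0$ for $m\le0$, legitimate by \eqref{E.initial} --- produces a \emph{first-order} linear equation $xP(x)f'(x)=Q(x)f(x)$, with $P(x)=2x^5+5x^4+4x^3+x^2+8x+4=(2x+1)(x^4+2x^3+x^2+4)$ (the quartic factor irreducible over $\Q$) and $Q(x)=x^4+x^3+16x+4$; equivalently $f(x)/x=\prod_{\rho}(1-x/\rho)^{a_{\rho}}$, the product over the roots $\rho$ of $P$, with $a_{\rho}=\res_{x=\rho}\bigl(Q(x)/(xP(x))\bigr)$. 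Since $c_{p+1}$ is the coefficient of $x^{p}$ in $f(x)/x$, I would compute it modulo $p$ by means of the Cartier operator attached to the differential $Q\,dx/(xP)$ --- equivalently, the $p$-curvature of the rank-one connection $\frac{d}{dx}-Q/(xP)$ --- arriving at an explicit formula for $c_{p+1}\bmod p$ in terms of the residues $a_{\rho}$, and then checking that this is $\not\equiv 2$ unless $p$ divides a fixed resultant or discriminant. Carrying out that last verification uniformly in $p$ --- in particular keeping control of denominators while the $a_{\rho}$ range over residue fields of primes above $p$ --- is the step I expect to be the real obstacle; all that precedes it is elementary linear algebra over $\F_p$ on top of the already-established Theorems~\ref{T.modp} and~\ref{T.congr}.
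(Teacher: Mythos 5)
Your reduction of the theorem to a single non-congruence is correct and, granting Theorem \ref{T.modp} (which is established independently of Theorem \ref{T.union}) together with the congruences \eqref{E.congr}, the linear algebra is sound: $\bar C_p$ is a well-defined linear form $L$ on $(\bar C_1,\ldots,\bar C_4)\in V_p$ because the divisions by $4(n+4)$ needed to reach $\bar C_p$ are by units of $\F_p$; the shifted sequence $(\bar c_{n+p})_{n}$ does lie in $W_p$ since the coefficients of \eqref{E.recurrence} are linear in $n$; and the evaluations $L(w)=\bar c_p=1$, $L(v)=\bar c_{2p}=2$ correctly convert the statement into the equivalence with $c_{p+1}\not\equiv 2\pmod p$ for $p\neq 2,3,5,13$ (your observation that $c_6=-7/64\equiv 2\pmod 5$ is a good consistency check on the excluded primes).

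The gap is that this non-congruence is the entire arithmetic content of the theorem, you do not prove it, and the strategy you sketch for it would not work. The quantity $c_{p+1}\bmod p$ is not governed by a fixed resultant or discriminant: from $s(x)\equiv 2x(2x+1)Q(x)^{(p-1)/2}\cdot(\text{a series in }x^p)\pmod p$ one sees that $c_{p+1}\bmod p$ is essentially a coefficient of $Q(x)^{(p-1)/2}$ in degree near $p$, i.e.\ a Hasse-invariant-like quantity of exactly the same nature as the constants $\alpha_p,\beta_p$ of \S\ref{SS.cartierE}; such quantities genuinely oscillate with $p$ rather than being reductions of fixed rational numbers (compare \S\ref{SS.alphabeta}(iii), where the analogous coefficient for $y^2=x^3+1$ vanishes for the full density-$1/2$ set of primes $p\equiv 2\pmod 3$, not for a finite set). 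Equivalently, $\bar c_{p+1}$ is read off from the local expansion at $(0,2)$ of $C(x^{-1}\xi)$, where $\xi=2(2x+1)\omega$, and hence depends on how the Cartier operator acts on the second-kind part of the de Rham cohomology of $E$ modulo $p$; what is required is a structural statement valid for \emph{every} prime of good reduction, which is why the paper's route goes through the Cartier operator on $E$, the non-vanishing of the pair $(\alpha_p,\beta_p)$ (Proposition \ref{P.alphabeta2}), and the unramified cover furnished by $z=x^2+x+y$ with $\xi=2\,\d z/z$. Until you supply an argument of this kind for $c_{p+1}\not\equiv 2\pmod p$, the proof is incomplete at its only genuinely arithmetic step.
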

 
 
 Correspondingly, we have:

\begin{cor}\label{C.union} Let $(C_n)_{n\in\N}$ be a sequence of rational numbers satisfying (\ref{E.recurrence}) and suppose that   $(C_1,C_2,C_3,C_4)$ is not proportional to $(1,2,-1/8,-1/2)$. Then there are only finitely many primes $p$ such that all the $C_n$ are $p$-integral and satisfy $C_{p}\equiv C_1\pmod p$. 
\end{cor}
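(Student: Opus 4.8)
The plan is to obtain Corollary \ref{C.union} directly from Theorem \ref{T.union}, the only extra ingredient being the elementary observation that two vectors of $\Q^4$ which are proportional modulo infinitely many primes are already proportional over $\Q$.

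First I would reduce modulo $p$. Assume, towards a contradiction, that the hypothesis of the Corollary holds but that there are infinitely many primes $p$ for which all the $C_n$ are $p$-integral and $C_p\equiv C_1\pmod p$. Throw away the (finitely many) primes in $\{2,3,5,13\}$; an infinite set $S$ of primes remains. For each $p\in S$, termwise reduction modulo $p$ is well defined because every $C_n$ is a $p$-integer, and the reduced sequence $(\bar C_n)_{n\in\N}$ lies in $W_p$: indeed, for each fixed $n$ the relation \eqref{E.recurrence} has integer coefficients, so it survives reduction modulo $p$. The congruence $C_p\equiv C_1\pmod p$ now reads $\bar C_p=\bar C_1$, so Theorem \ref{T.union} applies and yields that $(\bar C_1,\bar C_2,\bar C_3,\bar C_4)$ is proportional, in $\F_p^4$, to the reduction modulo $p$ of $v:=(1,2,-1/8,-1/2)$.

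Next I would transfer this back to characteristic zero. Let $M_{ij}:=C_iv_j-C_jv_i\in\Q$, for $1\le i<j\le 4$, be the $2\times 2$ minors of the $2\times 4$ matrix with rows $(C_1,C_2,C_3,C_4)$ and $v$. Since $v_1=1$, the statement that $(C_1,C_2,C_3,C_4)$ is not proportional to $v$ is equivalent to the statement that $M_{i_0j_0}\neq 0$ for some pair $i_0<j_0$; fix such a pair. For every odd prime $p$ the reduction of $v$ modulo $p$ is a nonzero vector of $\F_p^4$ (its first entry is $1$), and hence proportionality in $\F_p^4$ of $(\bar C_1,\dots,\bar C_4)$ to $\bar v$ forces all the $M_{ij}$, in particular $M_{i_0j_0}$, to vanish modulo $p$, i.e.\ $p$ divides the numerator of $M_{i_0j_0}$. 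As $M_{i_0j_0}$ is a fixed nonzero rational number it has only finitely many primes in its numerator, contradicting the infinitude of $S$. Therefore the set of primes in the statement is finite, which is what we wanted.

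In short, the Corollary is essentially a reformulation of Theorem \ref{T.union} over $\Q$, and there is no real obstacle beyond that theorem: the only points requiring attention are the legitimacy of reduction modulo $p$ for a $p$-integral solution (immediate, since \eqref{E.recurrence} has integer coefficients once $n$ is fixed) and the bookkeeping of the excluded primes $2,3,5,13$, which are harmlessly absorbed into the finite exceptional set. As with Corollary \ref{C.converse}, André's theorem is not needed for this statement: the finiteness comes purely from the $\F_p$-picture of Theorems \ref{T.modp} and \ref{T.union}.
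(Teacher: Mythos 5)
Your proof is correct and is exactly the deduction the paper intends: the Corollary is stated as an immediate consequence of Theorem \ref{T.union} (the paper gives no separate proof), obtained by reducing the $p$-integral sequence modulo $p$, applying that theorem, and noting via the $2\times 2$ minors that proportionality to $(1,2,-1/8,-1/2)$ modulo infinitely many primes forces proportionality over $\Q$. The points you flag -- legitimacy of the reduction, exclusion of $p=2,3,5,13$, and nonvanishing of the reduced reference vector -- are precisely the ones needing care, and you handle them correctly.
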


\medskip


\subsection{Comments on the results.}\label{SS.comments} Let us now briefly comment on the nature and proofs of these results.

\medskip

\underline{\it Theorem \ref{T.congr}} : The first part of Theorem \ref{T.congr} shall follow immediately on realizing the generating function $s(x)$ of the sequence $\{c_n\}$ as a solution of a suitable first order linear differential equation over $\Q(x)$, and in turn as a certain algebraic function (of degree $2$ over $\Q(x)$), indeed a rational function on the curve of genus $1$ defined by $y^2=4+(x+x^2)^2$. It shall appear that $s(x)$ is not in $\Q(x)$, so the $c_n$ do not satisfy any  linear recurrence with constant coefficients.

The second part is a bit more hidden, and depends on the fact that 
$x^{-1}s(x)$ is a logarithmic derivative of another algebraic function (in the same function field). 
Somewhat surprisingly, all power series of such a shape (forgetting their algebraicity) give rise to {\it the same} congruence properties; for instance, we shall find that Fermat's Little Theorem  follows as a special case. To develop all of  this shall be the object of the short Section \ref{S.congr}, where we shall provide simple self-contained proofs (together with a number of references to other known proofs). 

We shall also 
recall a (known) {\it converse}, i.e. that, for a given prime $p$, such    congruence properties of a sequence of integers may hold only if  the   generating function is $x$ times a logarithmic derivative of a power series with $p$-integral coefficients. So also the congruence properties amount  to an  integrality condition (for the coefficients of a related power series).

All of these facts  
  represent   very simple instances of a well-known and widely investigated conjecture of A. Grothendieck, which (in the simplest case of the rational field) roughly speaking predicts that {\it if a linear differential equation over $\Q(x)$ has a full set of  solutions modulo $p$ for all large primes $p$, then it has a full set of algebraic solutions}.\footnote{ Here and in the sequel, if algebraic non rational quantities are involved, by `modulo $p$' we shall mean   reduction modulo some place of $\overline\Q$ above $p$.  By {\it full set} we mean a maximal set - that is, of cardinality equal to the order of the differential equation - of linearly independent solutions over the constant field;  in the modulo $p$ case, this field is $\F_p(x^p)$. There are several equivalent formulations of this conjecture of Grothendieck; see \cite{[A2]}, also for a generalization by N. M. Katz, and \cite{[vdP]}.}


 It is a fact (due to T. Honda and N. M. Katz) that a full set of polynomial solutions modulo $p$ is equivalent to having   a full set of power series solutions in $\F_p[[x]]$ (see \cite{[H]}, or also  \cite{[DGS]}, III.2). A simple argument shall be given in \S \ref{SSS.honda-katz} below. 
 
 So, {\it a fortiori},  the Grothendieck conjecture yields that  {\it integrality  for the Taylor coefficients of a full system of solutions of a linear differential equation over $\overline\Q(x)$ may come only from algebraicity of the solutions}. \footnote{ Conversely, the Wronskian criterion shows that linear independence over the constant field is preserved by reduction modulo $p$, for all sufficiently large primes $p$. Hence, for such primes, the reduction of a full system of power series solutions with $p$-integer coefficients yields a full system of power series solutions modulo $p$.}

 We see that the integrality of the $c_n$ (for odd primes) is no coincidence:  it could not hold if the generating series  would not be algebraic, because this series satisfies a differential equation of order $1$ over $\Q(x)$, and  the Grothendieck conjecture is known in these cases.\footnote{ This case of the conjecture is not too difficult, on using the Cebotarev theorem to prove rationality of the residues; see   \cite{[vdP]}, \S 3.1. It immediately follows also from a theorem of Katz on `global nilpotence', see \cite{[DGS]}, Thm. 6.1. We further point out that `order $1$' is crucial here; indeed, there are transcendental series having integer coefficients and satisfying linear differential equations over $\Q(x)$; well-known examples arise with hypergeometric functions (see e.g. \cite{[H]}, \S 4) and with {\it Ap\'ery numbers} (see \cite{[vdPo]}).} The same holds for the congruence properties: by the said {\it converse}, these may hold for all odd primes only if $s(x)$ is of the shape $xf'(x)/f(x)$  for a series $f(x)$  with $p$-integral coefficients (at odd primes $p$);  but  $f$ then satisfies the differential equation, again  of the first order, $xf'=sf$; hence by the Grothendieck conjecture (proved  in this special case by Andr\'e as Prop. 6.2.2 in \cite{[A2]} \footnote{ This is much more difficult than the previously quoted result because now the equation is defined over a non-rational function field extension of $\Q(x)$}), $f$ must be algebraic as well,  
which is indeed what happens in our example.

We note that in our  arguments for Theorem \ref{T.congr}  we do not need recourse to any case of the Grothendieck conjecture: we only need  algebraicity properties which may be checked directly. Rather, the conjecture confirms that this algebraicity  is {\it a priori}  the only possible motivation for  the assertions in the theorem. On the contrary,  in the proof of Theorem \ref{T.converse} we shall use some cases of the said conjecture.

\medskip

\underline{\it Theorem  \ref{T.converse}} : As for Theorem \ref{T.congr}, to rephrase in convenient shape the assertions, one  starts by showing that the generating function $S(x)$ of any solution of (\ref{E.recurrence}) is a solution of  a certain  linear differential equation over $\Q(x)$, this time  inhomogeneous, whose homogeneous part is  the  same  already appearing in the proof of Theorem \ref{T.congr}. In turn, $S(x)$  is a solution of a corresponding (homogeneous) linear differential equation of the second order. It shall easily turn  out that, except for the 
special cases when $S(x)=a+bs(x)$, it is a transcendental function (Lemma \ref{L.algebraic}).

Now, the first part of the theorem is a consequence of a well-known formula which expresses `by quadratures' \footnote{ By this we mean that the expression involves only taking integrals of `known' functions.}  the solutions of an inhomogeneous
 linear differential equation of the first order  in terms of any given nonzero solution of the homogeneous one. 
 This also indicates that if a second order linear differential equation over $\Q(x)$ has an algebraic solution, then even the possible transcendental solutions have coefficients whose denominators are in a sense smaller  than may be expected. 
  The mentioned example (see a footnote above) of the coefficients of $\exp(x/(1-x))$, whose denominators grow like factorials, may be explained, e.g., by the fact that the associated first order differential equation  has irregular singularities. We refer to \cite{[DGS]}, Ch. III, for work of Honda and Katz exploiting the relevance of this kind of condition towards our issue. Also, we wonder here if a suitable conjecture can be stated predicting exactly, and in general,  when this semi-integrality phenomenon arises for a complete system of solutions of a linear differential system.\footnote{ Interesting examples of full integrality arise again with 
 Ap\'ery numbers; see \cite{[vdPo]} and also \cite{[B]}.} In this direction, the theory of $G$-functions provides highly important results which link the `average' $p$-adic radii of convergence of the solutions (which are naturally  related to the denominators of the coefficients) to the behaviour of the reductions modulo $p$ of the  differential equation; 
 we refer to \cite{[A]} and \cite{[DGS]} (especially Chs. VII, VIII) for an account of this theory. 
 
  

The second part of Theorem \ref{T.converse} is again strictly related to Grothendieck's conjecture, actually to  another special case of it, proved by the Chudnovski brothers in \cite{CC}, and by Andr\'e as Prop. 6.2.1 of \cite{[A2]} (see that paper also for further references), and roughly speaking stating that {\it if a differential form on an algebraic curve is exact when considered modulo $p$, for (almost) all primes $p$, then it is exact.} 

Similarly to the above, this yields that  {\it if  the integral of a power series representing an algebraic function has integral coefficients, then this integral is also algebraic}.  This attractive statement appears   already at p. 149 of \cite{[A]}, where it is described as a generalization of a simple but elegant result by P\'olya, who indeed  proved  in \cite{[P]} the assertion for the special (and extraordinarily  easier) case of rational functions (briefly discussed in \S \ref{SS.polya}) and raised elsewhere the case of general algebraic functions as an open question. 

This result by Andr\'e is in turn merely a special case of more general assertions of his, whose proofs rely among other things on delicate  techniques from transcendental number theory  (applied to $G$-functions) and on a uniformization procedure first suggested and implemented by D. and G. Chudnovsky in \cite{CC}. An application of Andr\'e's result  to our context suffices to exclude integrality of coefficients for any $S(x)$ as above, apart from those with `special' initial  coefficients.






\medskip

\underline{\it Theorems  \ref{T.modp} and \ref{T.union}}. These results shall be deduced using mainly the Cartier operator, whose main properties shall be recalled in \S \ref{SS.cartier}. (Complete self-contained proofs may be found in Appendix 2 of \cite{[L]}.) We shall also use a simple but important result of Honda and Katz, for which we shall provide a quick self-contained proof in \S \ref{SS.polya}. 

More precisely, we shall analyze the action of the Cartier operator on the space of differential forms of the first and second kind on a certain elliptic curve over $\F_p$; this action is regulated by the trace of Frobenius and by another integer modulo $p$. We haven't been able neither to locate in the literature any study on this, nor to discover the precise relation between these invariants, as the curve is the reduction of a given curve over $\Q$ and $p$ varies. But we have proved at least that these invariants cannot both vanish. For this we shall provide two different arguments; although one of them is related to well-known elementary facts on the Hasse invariant, we haven't found any reference for this in the literature. This result shall imply the assertion on the dimension  of $V_p$, whereas the rest of Theorem \ref{T.modp} shall be derived more easily in \S \ref{SS.polya}, exploiting the vanishing of a certain residue.


The proof of Theorem \ref{T.union} shall be derived by combining similar arguments with the analysis of an unramified cover of the relevant elliptic curve modulo $p$.

\bigskip

We shall often pause to illustrate and recall certain well-known facts, useful for or related to the main issues. We shall also indicate suitable literature; for instance,  the arithmetic of linear differential equations is discussed in the books \cite{[A]} and  \cite{[DGS]}; the notion of $p$-curvature and the Grothendieck conjecture are introduced and discussed in some simple cases   in the  introductory paper \cite{[vdP]}, or, more deeply, in \cite{[A2]}, where the conjecture is proved in some cases.

The paper shall be concluded with a section illustrating some related facts and several open questions.

We heartily thank Yves Andr\'e and Nicholas M. Katz for many important comments and references. We also thank   Jean-Paul Bezivin for the reference to George P\`olya's article \cite{[P]}.

\section{Some congruences modulo $p$ for coefficients of power series} \label{S.congr}

We prove two very simple but, at first sight, somewhat surprising results. 
In the proofs, for $a\in \Z_p$, we interpret $(1+x)^a$ as the formal power series $\sum_{n=0}^\infty 
{a\choose n}x^n$. Note that this lies in $\Z_p[[x]]$ and  satisfies the properties $(1+x)^a(1+x)^b=(1+x)^{a+b}$, as follows from the universal identities on binomial coefficients. We use the symbol `$O(x^N)$' to mean a power series containing only terms of order $\ge N$.

\medskip

\begin{lem} \label{L.dieu} Let $f\in \Z_p[[x]]$ be such that $f(0)=1$. Then there exist  unique  $a_1,a_2,\ldots \in \Z_p$ such that $f(x)=\prod_{m=1}^\infty(1-x^m)^{a_m}$.
\end{lem}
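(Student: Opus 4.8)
The plan is to construct the exponents $a_m$ recursively by induction on $m$, matching coefficients of $f$ against the partial products. Concretely, I would prove by induction on $N\ge 0$ the following statement: there exist unique $a_1,\ldots,a_N\in\Z_p$ such that $f(x)\equiv \prod_{m=1}^N(1-x^m)^{a_m}\pmod{x^{N+1}}$, where each factor $(1-x^m)^{a_m}$ is interpreted as the formal binomial series $\sum_{n\ge0}\binom{a_m}{n}(-x^m)^n\in\Z_p[[x]]$. The base case $N=0$ is the hypothesis $f(0)=1$ (empty product). For the inductive step, suppose $a_1,\ldots,a_{N-1}$ have been found so that $f(x)=\prod_{m=1}^{N-1}(1-x^m)^{a_m}+c\,x^N+O(x^{N+1})$ for some $c\in\Z_p$; since the partial product lies in $\Z_p[[x]]$ and has constant term $1$, it is invertible in $\Z_p[[x]]$, and we get $f(x)\cdot\prod_{m=1}^{N-1}(1-x^m)^{-a_m}=1+c'x^N+O(x^{N+1})$ for some $c'\in\Z_p$. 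Now $(1-x^N)^{a_N}=1-a_Nx^N+O(x^{N+1})$, so we must and can set $a_N\seteq -c'$; this choice is forced, which gives uniqueness of $a_N$, and then $f(x)\cdot\prod_{m=1}^{N}(1-x^m)^{-a_m}=1+O(x^{N+1})$, completing the induction.

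Having produced a unique sequence $(a_m)_{m\ge1}$ in $\Z_p$, I would then check that the infinite product $\prod_{m=1}^\infty(1-x^m)^{a_m}$ makes sense as a formal power series and equals $f$. Convergence is immediate: the factor $(1-x^m)^{a_m}$ is congruent to $1$ modulo $x^m$, so for any fixed $N$ only the factors with $m\le N$ affect the coefficients of $x^0,\ldots,x^N$, and these coefficients stabilize; hence the infinite product is a well-defined element of $\Z_p[[x]]$. By construction, its truncation modulo $x^{N+1}$ agrees with $f$ for every $N$, so the product equals $f$ exactly. Uniqueness of the full sequence follows from the uniqueness at each finite stage established above: if $\prod(1-x^m)^{b_m}=f$ with $b_m\in\Z_p$, then comparing modulo $x^{N+1}$ forces $b_m=a_m$ for all $m\le N$, for every $N$.

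The one point requiring a little care — the main (minor) obstacle — is the manipulation of the formal binomial series: I need that $(1-x^m)^{a}$ lies in $\Z_p[[x]]$ (which follows from $\binom{a}{n}\in\Z_p$ for $a\in\Z_p$, a standard fact about $p$-adic binomial coefficients, obtained by continuity from the integer case), that $(1-x^m)^a(1-x^m)^{-a}=1$ and more generally $(1-x^m)^a(1-x^m)^b=(1-x^m)^{a+b}$ as formal power series (the Vandermonde/Chu convolution identity, valid universally in the parameters and hence for $p$-adic values), and that $(1-x^m)^a=1-a x^m+O(x^{2m})$. These are exactly the properties recorded just before the lemma for $(1+x)^a$, applied with $x$ replaced by $-x^m$. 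With these in hand the argument above is entirely formal, and no further input (in particular nothing about the specific recurrence \eqref{E.recurrence} or the series $s(x)$) is needed.
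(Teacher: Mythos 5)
Your proposal is correct and follows essentially the same route as the paper: an induction on the truncation order, using that $(1-x^m)^{a}\in\Z_p[[x]]$ for $a\in\Z_p$, the additivity of exponents, and the expansion $(1-x^N)^{a}=1-ax^N+O(x^{N+1})$ to determine each $a_N$ uniquely. The only cosmetic difference is that you normalize by multiplying with the inverse of the partial product, whereas the paper adds the correction factor directly; your explicit treatment of the convergence of the infinite product and of uniqueness is a welcome (if routine) elaboration of what the paper leaves implicit.
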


\begin{proof} We prove that for $n\in\N$ there are $a_1,\ldots ,a_n\in\Z_p$ such that $f(x)=\prod_{m=1}^n(1-x^m)^{a_m}+O(x^{n+1})$; this clearly amounts to the sought conclusion. 

For this last statement, we argue  by induction on $n$. For $n=0$ this is the assumption $f(0)=1$, 
so suppose $n>0$ and the conclusion true up to $n-1$. By the induction hypothesis we may write $f(x)=\prod_{m=1}^{n-1}(1-x^m)^{a_m}-ax^n+O(x^{n+1})$, for some $a\in\Z_p$.
 Hence, using $1-ax^n=(1-x^n)^a+O(x^{n+1})$, we obtain 
\begin{equation*}
f(x)=\prod_{m=1}^{n-1}(1-x^m)^{a_m}-ax^n\prod_{m=1}^{n-1}(1-x^m)^{a_m}+O(x^{n+1})=
\left(\prod_{m=1}^{n-1}(1-x^m)^{a_m}\right)(1-x^n)^a+O(x^{n+1}),
\end{equation*}
and the induction step follows on setting $a_n:=a$, concluding the argument. The uniqueness of the $a_m$ is also clear in view of the same formulae.
\end{proof}

\medskip

\begin{rem}\label{R.dieu} This is similar, but not quite equal, to a lemma of Dieudonn\'e, appearing as Lemma 5.1 of [DGS] (and leading to `Dieudonn\'e's theorem', as stated therein); in that lemma the factors 
$(1-x^m)^{a_m}$ are replaced by factors $(1-b_mx^m)$. 

Also, as in that case, the lemma   may be stated for a `universal' series $f(x)=1+f_1x+f_2x^2+\ldots$, where the $f_m$ are indeterminates over $\Z$, and then  the $a_n$ turn out to be   certain universal polynomials in $\Q[f_1,f_2,\ldots ]$; the point here is that for $f_i\in\Z_p$ the values of these polynomials lie in $\Z_p$ as well.

\end{rem}

The following proposition, in a  different formulation, and with a different proof,  appears e.g. as the `Proposition' at p. 143 of  \cite{[B]}; see \cite{[B]} also for references to further proofs. The proof given below, differently from other ones, does not use Fermat's Little Theorem, which instead shall be found as a consequence (actually using only $\Z$ in place of  $\Z_p$), as anticipated above.

\medskip

\noindent{\bf Notation.} We shall use the following notation, also in the sequel. For a formal series $s(x)\in \O [[x]]$, where $\O$ is a domain  with quotient field $k$, we let ${\mathcal L}(s)\in k((x))$ denote the formal logarithmic derivative, i.e. $s'(x)/s(x)$. If $s(0)=1$  and $s\in\O[[x]]$ then ${\mathcal L}(s)$   lies in $\O[[x]]$, as follows on writing $s(x)=1-xg(x)$, where $g\in \O[[x]]$ and $1/s(x)=1+xg(x)+x^2g(x)^2+\ldots$, which is a convergent expansion in $\O[[x]]$. 

This operation satisfies  ${\mathcal L}(s_1s_2)={\mathcal L}(s_1)+{\mathcal L}(s_2)$, as follows from the easy formal Leibniz rule $(s_1s_2)'=s_1's_2+s_1s_2'$. Also, we have ${\mathcal L}((1+x)^a)={a\over 1+x}$.

\medskip

\begin{prop}\label{P.congr}   Let $f\in \Z_p[[x]]$ be such that $f(0)=1$ and write $xf'(x)/f(x)=\sum_{n=1}^\infty c_nx^n$. Then $c_n\in\Z_p$ for all $n\in\N$ and, for every $r,k\in\N$,  $c_{kp^{r+1}}\equiv c_{kp^r}\pmod{p^{r+1}\Z_p}$.
\end{prop}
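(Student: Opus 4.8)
The whole statement reduces to the factorization furnished by Lemma~\ref{L.dieu} together with a formal computation of the logarithmic derivative. First I would invoke that lemma to write $f(x)=\prod_{m=1}^{\infty}(1-x^{m})^{a_{m}}$ with uniquely determined $a_{m}\in\Z_{p}$; the product is meant $x$-adically, so that for each fixed $n$ only the finitely many factors with $m\le n$ influence the coefficient of $x^{n}$.

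Next I would compute $x\,\mathcal{L}(f)=xf'(x)/f(x)$. By the additivity $\mathcal{L}(s_{1}s_{2})=\mathcal{L}(s_{1})+\mathcal{L}(s_{2})$ recalled in the Notation paragraph and the elementary identity $\mathcal{L}\bigl((1-x^{m})^{a_{m}}\bigr)=-a_{m}mx^{m-1}/(1-x^{m})$, one gets
\[
\frac{xf'(x)}{f(x)}=-\sum_{m=1}^{\infty}a_{m}m\,\frac{x^{m}}{1-x^{m}}=-\sum_{m=1}^{\infty}a_{m}m\sum_{j\ge 1}x^{mj},
\]
hence, reading off the coefficient of $x^{n}$,
\[
c_{n}=-\sum_{d\mid n}d\,a_{d}.
\]
If one prefers to avoid manipulating the infinite product, the same formula follows by truncating $f$ modulo $x^{n+1}$, differentiating the resulting finite product, and comparing coefficients; this is the step where I would be slightly careful, since $f\equiv g\pmod{x^{n+1}}$ only yields $f'/f\equiv g'/g\pmod{x^{n}}$, which is nonetheless enough to pin down $c_{n}$. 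In particular $c_{n}\in\Z_{p}$ because each $a_{d}\in\Z_{p}$, which is the first assertion; taking $f(x)=1/(1-ax)$ with $a\in\Z$ gives $c_{n}=a^{n}$, so the congruences below will in particular recover Fermat's Little Theorem (with $\Z$ in place of $\Z_{p}$), as anticipated in the text.

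For the congruence, fix $k$ and $r$ and subtract the two closed formulas:
\[
c_{kp^{r+1}}-c_{kp^{r}}=-\sum_{\substack{d\mid kp^{r+1}\\ d\nmid kp^{r}}}d\,a_{d}.
\]
I would then observe the elementary divisibility fact that every $d$ occurring in this sum satisfies $p^{r+1}\mid d$: writing $d=p^{t}e$ with $p\nmid e$, the condition $d\mid kp^{r+1}$ forces $e\mid k$ (since $e$ is prime to $p$), so if $t\le r$ then $d=p^{t}e\mid p^{r}k=kp^{r}$, contradicting $d\nmid kp^{r}$; hence $t\ge r+1$. Since each $a_{d}\in\Z_{p}$, every term $d\,a_{d}$ lies in $p^{r+1}\Z_{p}$, and therefore $c_{kp^{r+1}}\equiv c_{kp^{r}}\pmod{p^{r+1}\Z_{p}}$, as desired.

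I do not anticipate a genuine difficulty here: once Lemma~\ref{L.dieu} is available, the argument is a short formal manipulation ending in a one-line divisor count. The only place asking for a little attention is the passage from the infinite product to the identity $c_{n}=-\sum_{d\mid n}d\,a_{d}$, which I would settle either by appealing to the $x$-adic continuity of $\mathcal{L}$ on series with constant term $1$, or, more conservatively, by the modulo-$x^{n+1}$ truncation just indicated.
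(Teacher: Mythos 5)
Your proposal is correct and follows essentially the same route as the paper: apply Lemma~\ref{L.dieu}, take the logarithmic derivative of the product to get $c_n=-\sum_{d\mid n}d\,a_d$, and observe that every divisor of $kp^{r+1}$ not dividing $kp^r$ is a multiple of $p^{r+1}$. The only difference is that you spell out the final divisibility count and the truncation justification, which the paper leaves implicit.
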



\begin{proof} That the $c_n$ lie in $\Z_p$ is clear from the above remarks.

 We now apply Lemma \ref{L.dieu} to $f(x)$, obtaining $f(x)=\prod_{m=0}^\infty (1-x^m)^{a_m}$, where $a_m\in\Z_p$, an identity valid in $\Z_p[[x]]$. Taking ${\mathcal L}$  of both sides, using the rules just recalled (which clearly extend to an infinite convergent product) and multiplying by $x$, we obtain
 \begin{equation*}
 \sum_{n=1}^\infty c_nx^n=-\sum_{m=0}^\infty {ma_mx^m\over (1-x^m)}.
 \end{equation*}
 The series on the right may be in turn expanded as 
 \begin{equation*}
 -\sum_{m=0}^\infty ma_mx^m\sum_{l=0}^\infty x^{ml}=-\sum_{l,m\ge 0}ma_mx^{m(l+1)}=-\sum_{n=0}^\infty (\sum_{m|n}ma_m)x^n.
 \end{equation*}
 Hence we have
\begin{equation*}
 c_n=-\sum_{m|n}ma_m.
 \end{equation*}  
 This implies that $c_{kp^{r+1}}-c_{kp^r}$ is the sum of terms $-ma_m$ where $m$ divides $kp^{r+1}$ but not $kp^r$; but then all such terms are multiples of $p^{r+1}$, and we immediately obtain the desired congruence.

\end{proof}

\medskip

\begin{rem}\label{R.composite}  
 
 (i) Another proof of (at least)   the case $r=0$ of the lemma may be quickly obtained  by using the so-called {\it Cartier operator}; we shall sketch  the argument in \ref{E.modp}. 
 
(ii) We sketch still another (self-contained) argument for the case $r=0$. Let us define a derivation on  $\Z[T_1,T_2,...]$ by  $D(T_i)=T_{i+1}$  and polynomials $Q_m\in \Z[T_1,T_2,\ldots ]$  by  $Q_0=1$, $Q_1=T_1$  e $Q_{m+1}=(D+T_1)Q_m$.  These polynomials express successive derivatives of $z=f'/f$: we have indeed $f'=zf$, $f''=(z'+z^2)f$ and in general $f^{(n)}=Q_n(z,z',\ldots ,z^{(n-1)})f$. In view of the additivity of the logarithmic derivative these polynomials satisfy the formula
 \begin{equation*}
Q_n(T+U)=\sum_{r+s=n}{n\choose r} Q_r(T)Q_s(U).
\end{equation*}
where $T=(T_1,T_2,...)$ and similarly for $U$. (It suffices to consider $z=f'/f$ and $w=g'/g$.) For $n=p$  we then have 
\begin{equation*}
Q_p(T+U)\equiv Q_p(T)+Q_p(U),
\end{equation*}
whence one easily deduces $Q_p\equiv T_p+T_1^p$. Hence $f^{(p)}(x)\equiv (z^{(p-1)}+z^p)f(x)$. But if $f$ is a series with  integer coefficients we have $f^{(p)}\equiv 0\pmod p$, 
and it follows that $c_{kp}\equiv c_k\pmod p$. (All of this is   related to the notion of `$p$-curvature'; see \cite{[A2]}, p. 63, and also \cite{[vdP]} for an elementary discussion.)

(iii) We could have used also the lemma of Dieudonn\'e mentioned in Remark \ref{R.dieu}.
Along  the same lines we would obtain $c_n=-\sum_{m|n}mb_m^{n/m}$. This is slightly more complicated but has the advantage that one can obtain congruences over fields other than $\F_p$. For instance, over any domain $\O$ we obtain that $c_{kp}\equiv c_k^p\pmod {p\O}$. 
\end{rem}

\medskip

\subsection{A deduction of Fermat's Little Theorem.}  We note that Proposition \ref{P.congr}  immediately implies Fermat's Little Theorem that $a^p\equiv a\pmod p$ for an integer $a$. It suffices to consider the function   $f(x)=1-ax$, so $xf'(x)/f(x)=-ax/(1-ax)=-\sum_{n=1}^\infty a^nx^n$. Taking $k=1, r=0$ in Lemma 2 yields the result.

\medskip

\noindent{\it 
Methodological remark.}  Concerning this deduction, 
we note that the given argument, though certainly much more complicated than the usual ones, seems logically different, and for instance does not use directly the unique factorization in $\Z$. 
(Implicitly, it exploits  that the derivative of $(1+x)^p$ vanishes  in characteristic $p$.)

\medskip

\subsection{A converse proposition.}\label{SS.converse}  Following \cite{[B]}, we point out that Proposition \ref{P.congr} has a relevant   kind of converse, namely: 

\smallskip

{\it If $c_1,c_2,c_3,\ldots$ is a sequence in $\Z_p$ such that 
 for every $r,k\in\N$,  we have  $c_{kp^{r+1}}\equiv c_{kp^r}\pmod{p^{r+1}\Z_p}$, then $\exp(\sum_{n=1}^\infty {c_n\over n}x^n)$ is a power series in $\Z_p[[x]]$. 
 
 In other words, there exists $f\in \Z_p[[x]]$ such that $f(0)=1$ and $\sum_{n=1}^\infty c_nx^n=x{f'(x)\over f(x)}$.} 
 
 \medskip
 
 The last formulation illustrates  the link with the above. 
 
 For a proof of this converse see \cite{[B]}, Proposition at p. 143 (also for references to other possible arguments). A  proof also follows just on reversing the arguments for Proposition \ref{P.congr}; indeed, the said congruences easily imply that $\sum_{m|n}\mu(m)c_{n/m}$ is divisible by $n$ in $\Z_p$ (where $\mu$ is the M\"obius function). And now it suffices to define $a_n:=-(\sum_{m|n}\mu(m)c_{n/m})/n$, $f(x):=\prod_{n=1}^\infty (1-x^n)^{a_n}$ and to note that $c_n=-\sum_{m|n}ma_m$.

 See \cite{[B]} also for  an interpretation of these congruences in terms of formal groups (over $\Z_p$ or $\Z$).
 
 \medskip
 
 To conclude this short subsection, we also point out that:
 
 {\it  The weaker congruences $c_{kp}\equiv c_k\pmod p$ (for a given $p$ and $k\in\N$), i.e. the case $r=0$ of the above, imply that the reduction modulo $p$ of $\sum_{n=1}^\infty c_nx^n$ is of the shape $x{f'(x)\over f(x)}$, where now $f(x)\in \F_p[[x]]$.}
 
 This   assertion may be derived using the Cartier operator (see \S\ref{SS.cartier}), but is also a direct consequence of what has just been proved. In fact, define a sequence $(\tilde c_n)_{n\in\N}$ in $\Z_p$ by $\tilde c_n:= c_r$ where $n=p^mr$ with integers $m,r\ge 0$ and $r$ prime to $p$. Then the congruences $c_{pk}\equiv c_k\pmod p$ imply  $\tilde c_n\equiv c_n\pmod p$. Also, $\tilde c_{pm}=\tilde c_m$ so the stronger congruences are true for the new sequence. Then the generating function $\sum_{n=1}^\infty \tilde c_nx^n$ equals $x{\tilde f'(x)\over \tilde f(x)}$ for a suitable $\tilde f\in\Z_p[[x]]$. But then, reduction modulo $p$ yields what needed.

\bigskip

\subsection{An elliptic example.} 
Let $y^2=F(x)$ be a  Weierstrass equation of an elliptic curve $E$, so $F(x)$ is a   cubic polynomial without multiple roots. We suppose that $F(x)$ is monic and lies in $\Z_p[x]$. Let $\omega={\d x\over y}$ be an invariant differential, and let $t=x/y$ be a local parameter at the origin of $E$ (i.e. the point at infinity). We may expand $t^2x$ as a series  in $t$: $t^2x=s(t)=1+s_1t+\ldots \in \Z_p[[t]]$ (see e.g. \cite{[L]}, Appendix 1).      We have $\omega=t\d x/x=-2\d t+ts'(t)\d t/s(t)$  
Then Proposition \ref{P.congr}   yields obviously a congruence for the coefficients of $\omega$ in its $t$-expansion. 

Other   somewhat similar (but more significant) congruences for the coefficients of $\omega$ were originally found by Atkin and Swinnerton-Dyer; they relate the elements   $(c_{p^rn+1}$ rather than the $c_{p^rn}$: 
see for instance \cite{[L]}, Appendix 2, p. 313, for a  derivation of them  using the Cartier operator, similarly to Remark \ref{R.composite}(i) above, and see \cite{[Ha]}, VI, 33.2,  for the case of higher powers of $p$ (where the congruences have   different  shape   with respect to the above ones).


\medskip

\section{An amusing application to a proof of Theorem \ref{T.congr}} Similarly to the last examples, 
we shall now give  an application  of these congruences to the coefficients of the series expansions of a suitable algebraic function constructed from the recurrence, proving Theorem \ref{T.congr}. 
We believe that   such applications, though elementary, are  more hidden and surprising than, e.g., the one to Fermat's Little Theorem which we have just shown.

\medskip

\begin{proof}[Proof of Theorem \ref{T.congr}.]  We try  to give the proof in a {\it pedagogical} way, which shall slightly increase the length. 
Simultaneously, it shall also be clear how to construct easily similar (and more recondite) examples.

\medskip

Clearly the recurrence (\ref{E.recurrence}) and the data (\ref{E.initial})  define uniquely a sequence $(c_n)_{n\in\N}$ of rational numbers. Let us then consider the generating function 
\begin{equation*}
s(x):=\sum_{n=0}^\infty c_nx^n\in \Q[[x]], 
\end{equation*}
so $xs'(x)=\sum_{n=0}^\infty nc_nx^n$.  The recurrence may be also written as 
\begin{equation*}
4((n+5)-1)c_{n+5}+8((n+4)-2)c_{n+4}+(n+3)c_{n+3}+(4(n+2)-1)c_{n+2}+(5(n+1)-1)c_{n+1}+2nc_n=0. 
\end{equation*}
Now,
\begin{equation*}
 (4+16x+x^3+x^4)s(x)=\sum_{n=0}^\infty  (4c_n+16c_{n-1}+c_{n-3}+c_{n-4})x^n,
\end{equation*}
where by convention $c_m:=0$ if $m<0$. 
Similarly
\begin{align*}
(4+8x+x^2+4x^3+5x^4+2x^5)xs'(x)= & \\ \sum_{n=0}^\infty \left(4nc_n+8(n-1)c_{n-1}+(n-2)c_{n-2}+4(n-3)c_{n-3}+5(n-4)c_{n-4}+2(n-5)c_{n-5}\right)x^n.\\
\end{align*}
Hence
\begin{align*}
(4+8x+x^2+4x^3+5x^4+2x^5)xs'(x)-(4+16x+x^3+x^4)s(x)= & \\ 
\sum_{n=0}^\infty \big(4(n-1)c_n+8((n-1)-2)c_{n-1}+ &\cr (n-2)c_{n-2}+  (4(n-3)-1)c_{n-3}+(5(n-4)-1)c_{n-4}+2(n-5)c_{n-5}\big)x^n.\\
\end{align*}
Of course, by the recurrence \eqref{E.recurrence}, in the right hand side we need only keep the terms with $n=0,1,2,3,4$. In other words

\begin{align*}
(4+8x+x^2+4x^3+5x^4+2x^5)xs'(x)-(4+16x+x^3+x^4)s(x)= &\\
 -4c_0-16c_0x+(4c_2-8c_1)x^2+(8c_3+c_1-c_0)x^3+ (12c_4+8c_3+2c_2+3c_1-c_0)x^4.\\
 \end{align*}
 This identity holds for general solutions of the recurrence \eqref{E.recurrence}. Note that the coefficients on the right are five linear forms in $c_0,\ldots ,c_4$, spanning a space of dimension $4$ (not $5$ ! \footnote{ This corresponds to the fact that the differential equation below has only one solution up to constant factors.} ). Also,  the above initial data show that each coefficient on the right vanishes (i.e., \  the full sequence $(c_n)_{n\in\Z}$ does  satisfy the said recurrence), hence for the actual series $s(x)$ we have
\begin{equation}\label{E.diff.eq.}
(4+8x+x^2+4x^3+5x^4+2x^5)xs'(x)-(4+16x+x^3+x^4)s(x)=0,
\end{equation}
equivalently
\begin{equation*}
{s'(x)\over s(x)}={4+16x+x^3+x^4\over x(4+8x+x^2+4x^3+5x^4+2x^5)}.
\end{equation*}
Since on the left we have a logarithmic derivative, it is natural to try to understand the residues of the rational function on the right, and in turn one tries to factor the denominator. One finds
\begin{equation*}
4+8x+x^2+4x^3+5x^4+2x^5=(1+2x)(4+x^2+2x^3+x^4)=(1+2x)Q(x),
\end{equation*}
say, where the last factor
\begin{equation*}
Q(x):=4+x^2+2x^3+x^4=4+(x+x^2)^2
\end{equation*}
  turns out to be irreducible over $\Q$. As to partial fractions, one finds
\begin{equation*}
{4+16x+x^3+x^4\over x(4+8x+x^2+4x^3+5x^4+2x^5)}={1\over x}+{1\over x+{1\over 2}}-{x(1+3x+2x^2)\over 4+x^2+2x^3+x^4}.
\end{equation*}
However $Q'(x)=2x(1+3x+2x^2)$.  Hence the function $s(x)$ satisfies 
\begin{equation*}
{\mathcal L}(s(x))={\mathcal L}(x)+{\mathcal L}(x+{1\over 2})-{1\over 2}{\mathcal L}(Q(x)),
\end{equation*}
whence
\begin{equation*}
{\mathcal L}\left({s(x)^2Q(x)\over x^2(x+{1\over 2})^2}\right)=0,
\end{equation*}
so
\begin{equation*}
s(x)^2=c{x^2(x+{1\over 2})^2\over Q(x)},
\end{equation*}
for a constant $c$, which is readily found to be $16$, leading to 
\begin{equation*}
s(x)^2= {4x^2(2x+1)^2\over Q(x)},
\end{equation*}

Let now  $y$ be a square root of $Q(x)$ in some extension field of $\Q(x)$; since $Q(0)=4$ is a square in $\Q$, we see that $y$ may be realized as a series $y(x)$ in $\Q[[x]]$, chosen so that $y(0)=2$. Comparing with the series for $s(x)=x+O(x^2)$ we finally have
\begin{equation}\label{E.s(x)}
s(x)= {2x (2x+1)\over y(x)} \in \Q(x,\sqrt{Q}).
\end{equation}
This shows that, in particular, $s(x)$ is an algebraic function in $\Q[[x]]$.  Note that $s(x)$ is not rational, so the $c_n$ do not satisfy any  linear recurrence with constant coefficients \footnote{ It is indeed an easily proved  well-known fact that the solutions of such recurrences yield rational generating functions.}, as asserted.  

Also, being algebraic, its coefficients can have only finitely many prime numbers in their denominators (Eisenstein Theorem, see e.g.  [DvdP]). Actually, a binomial expansion of $y(x)=2\sqrt{1+{1\over 4}x^2+{1\over 2}x^3+{1\over 4}x^4}=2\sum_{m=0}^\infty{1/2\choose m}4^{-m}x^{2m}(1+2x+x^2)^m$ readily shows that in fact only the prime $2$ can occur in the denominators of the $c_n$, proving another assertion  of the theorem. 

For an odd prime $p$ we may then  consider the reduction of $s(x)$ modulo $p$.  A well-known theorem of Ostrowski (see, e.g., \cite{[Z]}), implies that it remains non-rational for all large primes. In the present simple case it is indeed very easy to check that this happens for all odd primes, since then $Q(x)$ is not a square modulo $p$; in turn, this yields at once that the sequence $(c_n)$ is not eventually periodic modulo an odd prime (as remarked after the statement).

\medskip

\begin{rem} Later in \S \ref{SS.explicit}, using a similar binomial expansion,  we shall exhibit  an `explicit' formula for the coefficients $c_n$. However such expression (which anyway is very special of the present data) seems not helpful for proving the above congruences and the full theorem. (We may be wrong, and maybe such a proof  exists and is not too complicated; we believe that, although special, a direct proof using such explicit formulas would be interesting; we leave this problem for the interested reader; we also refer to \cite{[B]} for some direct proofs in this sense.)
\end{rem}

\medskip

\subsection{An elliptic curve.} \label{SS.elliptic} We note that the field $\Q(x,\sqrt{Q})$ is an elliptic function field, because $Q$ has no repeated roots. We let $E/\Q$ be a complete nonsingular curve with such a function field. It is of genus $1$ and defined (birationally) by the (non-Weierstrass) equation $Y^2=Q(X)$. \footnote{ Note that this equation defines a curve in $\P_2$ which is singular at infinity, but we interpret $E$ as a non-singular model. See, e.g., \cite{[Sil]}, Ch. 2, for a smooth embedding in $\P_3$.} The series $y(x)$ represents a Puiseux expansion of the rational function $y$ at one of the two points of $E$ above $x=0$, i.e. the point $(0,2)$ where $y$ has value $2$. The series $s(x)$ represents the (expansion at $(0,2)$ of the) rational function $2x(2x+1)/y$ on $E$, denoted $s$ in the sequel.

At infinity there are two expansions, corresponding to two places  of $E$, denoted $\infty_{\pm}$, and given by $y=\pm x^2\sqrt{1+{2\over x}+{1\over x^2}+{4\over x^4}}$, where it is understood that the square root is expanded with the binomial theorem as $1+{1\over x}+\ldots$. This also shows that $\infty_\pm$ are defined over $\Q$. We may choose the point $\infty_+$ as the origin for a group law on $E$, which becomes thus an elliptic curve over $\Q$. 

A Weierstrass equation for $E$ (whose point at infinity corresponds to $\infty_+$) is obtained by setting $z=x^2+x+y$ and observing that the divisor of $z$ is $2(\infty_-)-2(\infty_+)$. Putting also $t:=1+2x$, Also, we have $(2tz)^2=(2z)^3+(2z)^2-16(2z)$, which provides a Weierstrass equation. If we want to eliminate the quadratic term on the right, we may set $U=2z+{1\over 3}$, $V=2tz=t(U-{1\over 3})$, and obtain  $V^2=U^3-{49\over 3}U+{146\over 27}$. The $j$-invariant of $E$ is computed as $j(E)={7^6\over 5\cdot 13}$. In particular, since this is not an integer, $E$ has not complex multiplication.

\medskip

We also observe that the zeros of $t$ (which are defined over a field obtained by adding $\sqrt{65}$ to the ground field) are points of order $2$; the other point of order $2$ is $\infty_-$. The operation $(x,y)\mapsto (x,-y)$ corresponds to the involution $\zeta\mapsto \infty_--\zeta$ on $E$. The two zeros of $x$ are invariant under this involution; they correspond to $(0,\pm 2)$ in the $xy$-model and to $(\pm 4,\pm 4)$ in the first of the above Weierstrass models. On computation on this model shows that tripling $(4,4)$ leads to a point with non-integer coordinates; hence from the Lutz-Nagell theorem (see \cite{[Sil]}) we deduce that none of these points is torsion on $E$.

\medskip

For later reference, we now exhibit some relevant differential forms on $E$. First, set $\omega:={\d x\over y}$. We claim that it is {\it of the first kind}, i.e. regular on $E$. Indeed, at finite points $\alpha\in E$, $x-x(\alpha)$ is a local parameter, unless $x(\alpha)$ is a zero of $Q$; in the first case, $y(\alpha)\neq 0$ and $\omega$ is regular at $\alpha$; in the second case, $y(\alpha)=0$, $y$ is a local parameter and we may use $2y\d y=Q'(x)\d x$ to write $\omega =2\d y/Q'(x)$. But $Q$ has no multiple zeros, so $Q'(x(\alpha))\neq 0$, proving the claim. In the case of a point at infinity $\infty_\pm$, $u:=1/x$ is a local parameter, and $\omega=-\d u/u^2y$; this shows that $\omega$ is regular also at infinity, since $y$ has a pole of order $2$ there. It is a general fact of elliptic curves that $\omega$ is unique up to multiplication by a nonzero constant; it also follows that it is invariant for any group translation.

Further, consider the form $\eta:=(x^2+x)\omega$; this is not regular (it has poles at $\infty_\pm$), however it is {\it of the second kind}, i.e. with no residues. To check this, we have only to consider its expansion at the poles, i.e. $\infty_\pm$; as before, $u=1/x$ is a local parameter there, and $\eta=-(u+1)\d u/u^4y$. But the expansion of $u^4y$ is of the shape $\pm u^4x^2\sqrt{1+2u+u^2+4u^4}=\pm u^2(1+2u+u^2+...)^{1/2}$. Hence $\eta=\pm u^{-2}(u+1)(1+2u+O(u^2))^{-1/2}\d u=u^{-2}(1+O(u^2))\d u$, so  there is no   term in $u^{-1}$, proving the contention. 

Now, every exact differential form (i.e., of the shape $\d g$ for a rational function $g$ on $E$) is clearly of the second kind; and $\omega,\eta$ are also of the second kind. It turns out that {\it  in characteristic zero} they are linearly independent modulo exact forms: if $a\omega+b\eta=\d g$, for constants $a,b$ and $g\in\C(E)$, then $g$ could have poles only at $\infty_\pm$, moreover of order at most $1$.    But then $g$ would be necessarily of the shape $cx+d$ for constants $c,d$ (e.g. by the theorem of Riemann-Roch), leading to $a+b(x^2+x)=cy$, whence  $a=b=c=0$, as required. \footnote{ Observe that in characteristic $p$ this does not hold; for instance if $E$ is an elliptic curve with supersingular reduction, then $\omega$ is exact (see the Appendix 2 in \cite{[L]}). Actually, we shall see in \S \ref{SS.cartier} that they are never independent. The above deduction on the poles of $g$ may not hold because its poles could disappear by differentiation.}  It is a rather fundamental fact of the theory (see [L2], Theorem 8.1) that the dimension of the vector space of the forms of the second kind modulo exact ones is twice the genus, hence $2$ in the present case; this proves that this quotient space is generated by the images of $\omega$ and $\eta$. See the quoted book \cite{[L2]}  for other simple background facts which we tacitly assumed.

\smallskip

Finally, we add a few words on {\it good reduction}, which we interpret in a naive sense, sufficient for the present purposes. If $p$ is a prime number we can consider the reduction modulo $p$ of an equation for $E$, for instance $y^2=Q(x)$. This reduction yields an elliptic curve provided the reduction of $Q(x)$ does not have repeated roots, and we then speak of good reduction. To find when this occurs, observe that the repeated  roots are solutions of the system $Q(x)=Q'(x)=0$, and it turns out very easily that  for this to be solvable 
we must have $p=2,5,13$. Hence for other primes we have good reduction. 

We shall   consider in the sequel the function field $k(E)$ of the reduced curve, where $k$ is an algebraic extension of $\F_p$.  The Frobenius map $u\mapsto u^p$ sends $k(E)$ into the subfield $k(E)^p$; the extension $k(E)/k(E)^p$ is purely inseparable and we contend that $[k(E):k(E)^p]=p$. This is well known and in any case follows  on considering the four fields $k(x),k(x^p),k(E),k(E)^p$ and using the immediate observations  that $[k(x):k(x^p)]=p$  and that $k(E)^p=k(x^p,y^p)$, so $[k(E)^p:k(x^p)]=2$. (See also \cite{[L]}, Ch. IX, \cite{[Sil]}, Ch. II, especially Prop. 2.11, and see Ch. VII.5 for properties of good reduction.)

\bigskip
\medskip

Let us now go ahead with the proof of Theorem 1. It makes sense to consider the differential form $\xi:=s{\d x\over x}={2(2x+1)\d x\over y}=2(2x+1)\omega$ on $E$. \footnote{ This consideration seems rather unmotivated {\it a priori}, i.e. seems  to come from nowhere; however some motivation comes from the above Proposition \ref{P.congr}, especially from its converse mentioned in \ref{SS.converse},  and further  from the structure of residues of  $\xi$. Actually, as remarked in \ref{SS.comments}, if we knew {\it a priori} the validity of the congruences to be proved, then   \ref{SS.converse}  above together with a deep theorem of Andr\'e 
would imply that $\xi$ is the logarithmic differential of an algebraic function; this is indeed what we are going to check in (\ref{E.log}) below.}  
From the fact that $\omega$ is everywhere regular on $E$, it follows that $\xi$ has  poles only at $\infty_\pm$, and an easy calculation, using that $1/x$ is  a local parameter there,  shows that the residues therein are given by $\mp 4$ respectively. The residues being integers, it makes sense to ask whether $\xi$, or perhaps an integer multiple $m\xi$,  is of the shape $\d v/v$ for a function $v$ on $E$. Such a possible function should have zeros/poles only at infinity, with order $2m$. Now, this reminds of the Pell equation in polynomials, and here we have an example given by the equality $(x^2+x)^2-Q(x)\cdot 1^2=4$, leading to $(x^2+x+y)(x^2+x-y)=4$. This identity exhibits that the function $z:=x^2+x+y$ (which satisfies $z^2-2(x^2+x)z-4=0$) has a pole of order $2$ at $\infty_+$ and a zero of order $2$ at $\infty_-$ and no other zeros/poles, hence is a perfect candidate. Indeed, we may directly verify  that
\begin{equation}\label{E.log}
\xi={s\cdot \d x\over x}=2{\d z\over z}.
\end{equation}
This formula is checked as follows. We have $\d z=(2x+1+{\d y\over \d x})\d x$, hence $x\d z=x(2x+1+{\d y\over \d x})\d x$, and it now suffices to verify that $x(2x+1)+x{\d y\over \d x}={z s\over 2}$, which in turn amounts to $2yx(2x+1)+yx{Q'(x)\over y}=2(x^2+x+y)x(2x+1)$, i.e. $xQ'(x)=2x(2x+1)(x^2+x)$, which is indeed true.

\medskip

For $p>2$ we may expand $z$ at the said point $(0,2)\in E$ as a series in $\Z_p[[x]]$;  indeed, it suffices to use the  expansion $y(x)^{-1}={1\over 2}\left(\sqrt{1+{1\over 4}x^2+{1\over 2}x^3+{1\over 4}x^4}\right)^{-1}={1\over 2}\sum_{m=0}^\infty{-1/2\choose m}4^{-m}x^{2m}(1+2x+x^2)^m$. 
Then Proposition \ref{P.congr}, with $z(x)/2$ in place of $f(x)$,  applies to $s(x)/2$, proving the second half of Theorem \ref{T.congr}.

\end{proof}

\medskip

\subsection{An `explicit' formula.} \label{SS.explicit}  As anticipated above, we now  pause to give a kind of explicit formula for the $c_n$, and add some brief remarks on it.  We have already noted that if $s(x)$ would be rational the $c_n$ would admit the usual `explicit' expression as the values at integers of an exponential polynomial. This is not the present case, however there exists even in this case some `explicit' expressions for this. 

It is perhaps convenient to replace the $c_n$ with $l_n:=2^{2n-2}c_n$. The recurrence for the $c_n$ leads to the recurrence

\begin{equation*}
 (n+4)l_{n+5}+8(n+2)l_{n+4}+4(n+3)l_{n+3}+16(4n+7)l_{n+2}+64(5n+4)l_{n+1}+512nl_n=0
\end{equation*}
with initial data
\begin{equation*}
l_0=0,\ l_1=1,\ l_2=8,\ l_3=-2,\ l_4=-32.
\end{equation*}
while  the congruences of Theorem 1 amount, via Fermat-Euler's congruence (or directly from Proposition \ref{P.congr}), again to  $l_{kp^{r+1}}\equiv  l_{kp^r}\pmod{p^{r+1}}$.

Also, the binomial expansion alluded to above readily shows that the $l_n$ are integers. To find an explicit formula, we may indeed expand  $y(x)^{-1}=(\sqrt{Q(x)})^{-1}=(\sqrt{4+(x+x^2)^2})^{-1}$ at $x=0$, as the series  ${1\over 2}(1+({x(1+x)\over 2})^2)^{-{1\over 2}}={1\over 2}\sum_{m=0}^\infty{-{1\over 2}\choose m}4^{-m}x^{2m}(1+x)^{2m}$ and, on expanding the terms $(1+x)^{2m}$ again with the binomial theorem and using \eqref{E.s(x)}, we eventually find
$l_n=b_{n-1}+8b_{n-2}$  for $n\ge 2$, where 
\begin{equation*}
b_n=\sum_{r\equiv n\pmod 2}(-1)^{n-r\over 2}4^r{n-r\choose {n-r\over 2}}{n-r\choose r}.
\end{equation*}
We have for instance $b_0=1$, $b_1=4$, $b_2=-2$, $b_3=-16$, $b_4=-26$. This formula, explicit as it is,  however seems not to be very helpful for proving directly the said congruences. 

\smallskip

Another formula, this time  modulo $p$, can be derived using $s(x)={2x(2x+1)\over y(x)}=2x(2x+1)Q(x)^{p-1\over 2}y(x)^{-p}$. This exhibits the polynomial solution $2x(2x+1)Q(x)^{p-1\over 2}$ of (\ref{E.diff.eq.}), considered modulo $p$.

Observe that the  factor $y(x)^{-p}$, considered modulo $p$, does not involve powers $x^m$ with $0<m<p$, hence $s(x)\equiv x(2x+1)Q(x)^{p-1\over 2}+O(x^{p+1})\pmod p$, and, on expanding $Q(x)^{p-1\over 2}$ with the binomial theorem similarly to the above, leads to a certain congruence which we leave to the interested reader to write down. This procedure, like the former, seems not to lead directly even to the special case of the congruence   $c_p\equiv 1\pmod p$.

\smallskip

The above explicit formula leads easily to congruences for the integers $l_n$ modulo powers of $2$. For example, noting  that ${2k\choose k}$ is even for all integers $k>0$, we find that $b_n$ is a multiple of $4$ for odd $n>1$ whereas $b_{2m}\equiv (-1)^m{2m\choose m}\pmod 8$. This leads to $l_{2m}\equiv 0\pmod 4$ whereas $l_{2m+1}\equiv (-1)^m{2m\choose m}\pmod 8$, and in turn we deduce that $l_{2^k+1}$ is  even but not a multiple of $4$; for instance, $\ l_5=-154$. As to the precise power of $2$ dividing $l_n$, one can prove for instance that for odd $n=2m+1$, it is the same as the $2$-adic order of ${2m\choose m}$; this is known to be the sum of the binary digits of $m$, so 
not very simple to describe, e.g. not periodical; for $n\equiv 2\pmod 4$ there is a similar interpretation, but we have not further investigated the exact behaviour  for  $n\equiv 0\pmod 4$, which seems more complicated. 

We also remark that, of course,  if our data would be just slightly different, we would not have any expression so explicit; think for instance of examples where $Q(x)=4+(x+x^2)^2$ is replaced by a `general' polynomial of degree $4$ (with corresponding definitions for $s(x)$). Nevertheless, many of the assertions we have met so far (in particular, the above congruences) would continue to hold.

We finally observe that 
the formula $s(x)=2x(2x+1)/\sqrt{Q(x)}$ may be used to obtain an  `explicit' asymptotic behaviour of the $c_n$:  
one looks at the zeros of $Q(x)$ and uses Cauchy's formula to express $c_n$ with a suitable integral, approximating $1/\sqrt{Q(x)}$ with a sum of terms of the shape $\beta/\sqrt{1-\alpha x}$. From this kind of analysis, which we leave to the interested reader,  it follows in particular that $c_n$ behaves asymptotically like ${1\over \sqrt n}r^n(a\cos(n\theta)+b\sin(n\theta))+O({1\over n\sqrt n}r^n)$, for suitable real numbers $r>0$, $a,b,\theta$, where $\theta/\pi$ is irrational and   $a^2+b^2>0$ \footnote{ The involved numbers can also be made more explicit; for instance, $r^{-1}$ turns out to be the  minimum modulus of the complex roots of $Q(x)$, i.e. $2^{-1}\sqrt{1+\sqrt{65}-\sqrt{2+2\sqrt{65}}}$.}; it also follows that $c_n$ changes sign infinitely often. On the other hand, as generally happens for the coefficients of an algebraic function, it does not seem easy to describe the set of integers $n>0$ such that $c_n=0$;  the  asymptotic just given does not itself yield finiteness, only sparseness, for  the values $a\cos(n\theta)+b\sin(n\theta)$ happen occasionally to be small.\footnote{ It is tempting to believe the conjecture that would describe the set   $\{n:c_n=0\}$ as a finite union of arithmetical progressions plus a finite set (as the Skolem-Mahler-Lech theorem asserts for rational functions). In the present case this would imply finiteness, in view of the said asymptotic.} 

\section{Changing the initial data and a proof of Theorem \ref{T.converse} (i)}

We shall now go ahead by studying the generating functions of solutions of the same recurrence (\ref{E.recurrence}), but with general initial (rational) data. This shall lead us to the proof of Theorem \ref{T.converse}.

\medskip

Let then $S(x)=\sum_{n=0}^\infty C_nx^n\in\Q[[x]]$ be the generating function associated to another solution of the recurrence in rationals $C_n$.  We assume from now on that $S(x)$ is not a linear combination  of $1,s(x)$ with constant coefficients, which  is plainly sufficient to prove the theorem: indeed, the vector $(C_1,C_2,C_3,C_4)$ is proportional to $(1,2,-1/8,-1/2)=(c_1,c_2,c_3,c_4)$ if and only if $S(x)=C_0+C_1s(x)$, and also the value $6C_4+C_2+C_1$ vanishes when $S(x)$ is of this  shape.

Replacing $s(x)$ with $S(x)$  in the calculations performed above,  we find that

\begin{align*}(4+8x+x^2+4x^3+5x^4+2x^5)xS'(x)-(4+16x+x^3+x^4)S(x)= & \\
 -4C_0-16C_0x+(4C_2-8C_1)x^2+(8C_3+C_1-C_0)x^3+ (12C_4+8C_3+2C_2+3C_1-C_0)x^4.\\
 \end{align*}
 We then define 
 \begin{equation*}
R(x):=-4C_0-16C_0x+(4C_2-8C_1)x^2+(8C_3+C_1-C_0)x^3+ (12C_4+8C_3+2C_2+3C_1-C_0)x^4.
\end{equation*}

 This is a polynomial of degree $\le 4$. Clearly, it  is a constant multiple of  $4+16x+x^3+x^4$  if and only if $S(x)=C_0+C_1s(x)$, which we are supposing not to be the case. The above may be written as   the (linear inhomogeneous) differential equation \footnote{ This is sometimes called a `Risch equation': see \cite{[vdP]}.}
 
\begin{equation}\label{E.inh}
A(x)S'(x)-B(x)S(x)=R(x),
\end{equation}
where
\begin{equation*}
A(x)=x(1+2x)Q(x)=x(1+2x)(4+x^2+2x^3+x^4),\qquad B(x)=(4+16x+x^3+x^4).
\end{equation*}

After application of the differential operator ${\d\over\d x}- {R'(x)\over R(x)}$ on the left, this yields a linear (homogeneous) differential equation of order $2$ satisfied by $S(x)$. In the usual `differential' terminology, for which we refer to [DGS], this has $0$ as a regular singularity with rational exponents. \footnote{ Since it shall not be   relevant  for the sequel, we do not pursue here neither in recalling the relevant definition nor in giving the (very easy) details of this.}

A relevant observation now is in the following

\medskip

\begin{lem}\label{L.algebraic}
The series  $S(x)$ does not represent an algebraic function.
\end{lem}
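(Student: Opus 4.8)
The plan is to use variation of parameters to translate the algebraicity of $S$ into the exactness, on the elliptic curve $E$, of an explicit rational differential form, and then to rule that out by a pole analysis exploiting the involution $(x,y)\mapsto(x,-y)$. Concretely, write $S(x)=s(x)u(x)$ with $s$ the algebraic solution of \eqref{E.diff.eq.}. Substituting into \eqref{E.inh} and cancelling by $As'=Bs$, the terms in which $u$ is undifferentiated drop out and we are left with $A(x)s(x)u'(x)=R(x)$, that is
\begin{equation*}
u'(x)=\frac{R(x)}{A(x)s(x)}=\frac{R(x)}{2x^{2}(1+2x)^{2}\,y},
\end{equation*}
using $A=x(1+2x)Q$, $s=2x(1+2x)/y$ and $Q=y^{2}$. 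Thus $\nu\seteq u'(x)\,\d x=\dfrac{R(x)\,\d x}{2x^{2}(1+2x)^{2}\,y}$ is a rational differential form on $E$. Since $s$ is algebraic and nonzero, $S$ is algebraic iff $u=S/s$ is; and if $u$ is algebraic over $\overline\Q(E)$ with $u'\in\overline\Q(E)$, then differentiating the minimal polynomial of $u$ and reading off the coefficient of its second-highest power gives $u'=-a_{n-1}'/n$, whence $u\in\overline\Q(E)$. So if $S$ is algebraic then $\nu=\d u$ is an \emph{exact} differential on $E$, and it suffices to show this forces $R$ to be a constant multiple of $B(x)=4+16x+x^{3}+x^{4}$: by the remark preceding \eqref{E.inh} that means $S=C_{0}+C_{1}s$, against the standing assumption.

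To this end I would analyse $\nu=\dfrac{R(x)}{2x^{2}(1+2x)^{2}}\,\omega$, where $\omega=\d x/y$ is regular everywhere on $E$. Since $\deg R\le4$, the rational function $R(x)/(2x^{2}(1+2x)^{2})$ has poles only at $x=0$ and $x=-\tfrac12$, of order $\le2$, and no pole at $x=\infty$; hence $\nu$ has poles only at the four points of $E$ over $x\in\{0,-\tfrac12\}$, of order $\le2$, and is regular at $\infty_{\pm}$. Writing $\nu=\d g$, the function $g\in\overline\Q(E)$ is then regular away from those four points and has at most simple poles at them. The involution $\iota\colon(x,y)\mapsto(x,-y)$ permutes the four points, so $g_{\pm}=\tfrac12(g\pm\iota^{*}g)$ obey the same restrictions: the $\iota$-invariant part lies in $\overline\Q(x)$ with at most simple poles at $x=0,-\tfrac12$ and none at $\infty$, so $g_{+}\in\langle1,\,1/x,\,1/(1+2x)\rangle$; the $\iota$-anti-invariant part is $g_{-}=\varphi(x)\,y$ with $\varphi\in\overline\Q(x)$, and requiring $\varphi(x)y$ to be regular at the branch points of $x$ (the roots of $Q$, where $y$ vanishes simply and $x$ to order $2$), to vanish to order $\ge2$ at $x=\infty$, and to have at most simple poles at $x=0,-\tfrac12$, forces $\varphi$ to be a constant multiple of $1/(x(1+2x))$; since $1/s=y/(2x(1+2x))$ this says $g_{-}\in\langle1/s\rangle$. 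Hence $g\in\langle1,\,1/x,\,1/(1+2x),\,1/s\rangle$. (Equivalently one may just invoke $\dim H^{0}(E,D)=\deg D$ for $\deg D\ge1$, with $1,1/x,1/(1+2x),1/s$ a basis of the relevant $H^{0}$.)

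Finally, $\iota^{*}\omega=-\omega$, so $\nu$ is $\iota$-anti-invariant and the $\iota$-invariant part of $\d g=\nu$ vanishes; writing $g_{+}=a_{0}+a_{1}/x+a_{2}/(1+2x)$, that part is $\d g_{+}=-(a_{1}/x^{2}+2a_{2}/(1+2x)^{2})\,\d x$, which vanishes only when $a_{1}=a_{2}=0$. Therefore $g=\mathrm{const}+\lambda/s$, and using $s'/s=B/A$ from \eqref{E.diff.eq.},
\begin{equation*}
\nu=\lambda\,\d\!\left(\frac1s\right)=-\lambda\,\frac{s'}{s^{2}}\,\d x=-\lambda\,\frac{B(x)\,\d x}{2x^{2}(1+2x)^{2}\,y},
\end{equation*}
so $R=-\lambda B$ --- the excluded case. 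This would complete the proof that $S$ is not algebraic. The step requiring genuine care is the classification of the admissible $g$: it is really a Riemann--Roch computation on the genus-$1$ curve $E$, and it demands careful bookkeeping of orders of vanishing at the branch points of $x$ and at $\infty_{\pm}$. Everything else is formal.
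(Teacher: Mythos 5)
Your proof is correct, but it takes a genuinely different route from the paper's. The paper argues Galois-theoretically: conjugating \eqref{E.inh} gives $S^g-S=c_gs$, a cocycle/averaging trick produces a $G$-invariant $S_1=S+cs\in\C(x)$ still satisfying $AS_1'-BS_1=R$, and then a direct pole analysis of this first-order ODE on $\P_1$ (degree count at infinity, plus checking that $dA'(\alpha)+B(\alpha)=0$ fails at $\alpha=0$, $\alpha=-1/2$ and the roots of $Q$) forces $S_1$ to be constant, a contradiction. You instead divide by $s$ rather than averaging away the $s$-component: algebraicity of $S$ becomes, via the standard minimal-polynomial argument, membership of $u=S/s$ in $\overline\Q(E)$, hence exactness on $E$ of the form $\nu=\frac{R}{2x^2(1+2x)^2}\omega$, which you then refute by Riemann--Roch on the divisor of allowed poles together with the anti-invariance of $\nu$ under $(x,y)\mapsto(x,-y)$. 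Your computation of the space of admissible primitives $g$ is accurate (the basis $1,1/x,1/(1+2x),1/s$ of $L(P_1+\cdots+P_4)$ checks out, as does the elimination of the invariant part), and the endgame $R=-\lambda B$ correctly lands in the excluded case $S=C_0+C_1s$. What each approach buys: the paper's is more elementary and self-contained, needing only pole bookkeeping for rational functions on $\P_1$; yours requires the geometry of $E$ and Riemann--Roch, but it is the natural characteristic-zero counterpart of the exactness arguments the paper deploys later (Proposition \ref{P.residues}, Proposition \ref{P.modp}, and the Cartier-operator analysis), and in effect it shows directly that the form in \eqref{E.derivative} is exact on $E$ only in the degenerate case --- information the paper obtains in characteristic zero only by appealing to transcendence of $S$ combined with the Chudnovsky--Andr\'e theorem.
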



\begin{proof} Suppose the contrary, and let $G$ be the Galois group $\Gal(L/\C(x))$ of a finite normal extension $L$ of $\C(x)$ containing $S(x)$. Conjugating the differential equation (\ref{E.inh}) and subtracting we obtain
 that for each $g\in G$ there exists a constant $c_g\in\C$ such that
 \begin{equation*}
 S^g-S=c_gs.
\end{equation*}
 We have then the cocycle equation $c_{gh}s=S^{gh}-S=(S^h-S)^g+(S^g-S)=c_hs^g+c_gs=(c_h\epsilon_g+c_g)s$, where $\epsilon_g\in\{\pm 1\}$ is determined by $s^g=\epsilon_g s$ (indeed, $s^2\in\Q(x)$). Hence $c_{gh}=c_h\epsilon_g+c_g$. As usual, we sum over $h\in G$ and, setting $c:={1\over |G|}\sum_{h\in G}c_h$,  we obtain that 
 \begin{equation*}
 c=c_g+c\epsilon_g.
 \end{equation*}
Putting now, $S_1:=S+cs$, we have $S_1^g=S^g+cs^g=(S+c_gs)+c\epsilon_g s=S+cs=S_1$. Hence $S_1$ is fixed by $G$ and thus lies in $\C(x)$.\footnote{Here we could also have taken the trace over $\C(x)$ to obtain an equivalent conclusion.}

Let the expansion at $x=\infty$ of $S_1(x)$ be $S_1(x)=ax^d+O(x^{d-1})$; then we have $AS_1'-BS_1=2adx^{d+5}+O(x^{d+4})$. But $AS_1'-BS_1=R$, hence $d\le 0$.  

Then, clearly, $S_1$ may have   poles only at some zero of $A$. If $\alpha$ is the pole and $S_1$ has a local expansion at $\alpha$ starting with $a(x-\alpha)^{-d}$, for $a\neq 0$ and an integer $d>0$, then the expansion at $\alpha$ of $AS_1'-BS_1$ starts with $a(-dA'(\alpha)-B(\alpha))(x-\alpha)^{-d}$. But then we must have $dA'(\alpha)+B(\alpha)=0$. Let us check whether this may happen. If $\alpha=0$, then $A'(\alpha)=B(\alpha)=4$, which is impossible. If $\alpha=-1/2$, then $A'(\alpha)=B'(\alpha)=-65/16\neq 0$, again an impossible event. If $Q(\alpha)=0$, then $A'(\alpha)=2\alpha(1+2\alpha)(PP')(\alpha)$ (where we have put $P(x)=x+x^2$, so $Q=4+P^2$). But then $2dx(1+2x)PP'+4(1+2x)+x^2P$ must be divisible by $Q(x)$, since $Q(x)$ is irreducible over $\Q$; hence $2dx(1+2x)PP'+4(1+2x)+x^2P=l(x)Q(x)$ for some linear $l(x)$. Reducing modulo $P$ we find that $l(x)=1+2x$, which would imply that $l(x)$ divides $xP$, again  impossible. 

Summing up, $S_1(x)$ would have no poles, so would be constant, a contradiction with our assumption that $S(x)$ is not of the shape $C_0+C_1s(x)$. 
\end{proof}

\medskip

\begin{rem} {\bf A convention.} \label{R.convention} Before going ahead, we note that for all of our assertions it makes no difference to replace $S(x)$ by $S(x)-C_0$, which amounts to suppose $C_0=0$ as first datum of our recurrence; note that the choice of this first value does not affect the subsequent values. With this convenient choice, we have $S(0)=0$ and  the equation (\ref{E.inh}) remains true with $R(x)$ replaced by $x^2\tilde R(x)$, where
 \begin{equation*}
\tilde R(x):=(4C_2-8C_1)+(8C_3+C_1)x+ (12C_4+8C_3+2C_2+3C_1)x^2.
\end{equation*}

\end{rem}

\medskip

\subsection{Solving by `quadratures'.} To go ahead, we now solve `by quadratures' the inhomogeneous differential equation (\ref{E.inh}) satisfied by $S(x)$. We put
\begin{equation*}
f(x):={S(x)\over s(x)}.
\end{equation*}
This is a Laurent series in $\Q((x))$; actually, by our convention $C_0=0$, this lies in $\Q[[x]]$. 
Differentiating the equation $S(x)=f(x)s(x)$ we have $S'=f's+fs'$, whence $AS'-BS=f(As'-Bs)+Af's=Af's$. Therefore we find $f'=R/As$ and, for the differential $\d f=f'\d x$,
\begin{equation}\label{E.derivative}
\d f(x)=f'(x)\d x={R(x)\d x\over A(x)s(x)}={\tilde R(x)\over 2(1+2x)^2Q(x)}y(x)\d x={\tilde R(x)\over 2(1+2x)^2}\omega,
\end{equation}
where $\omega:=\d x/y$ is the differential form of the first kind on $E$, which we have already introduced. Observe that the right hand side represents a differential form  relative to the elliptic field $\Q(x,y)$, whereas on the left we have an exact differential of a Laurent series in $\Q((x))$ (in which $\Q(x,y)$ is embedded).

\bigskip

\begin{proof}[Proof of   Theorem \ref{T.converse} (i).] 
We can now prove the first assertion (i) of Theorem \ref{T.converse}. Let us write $f(x)=
f_0+f_1x+\ldots$. Note that by (\ref{E.derivative}) the derivative of $f(x)$ is algebraic; hence by Eisenstein's theorem (see \cite{[DvdP]}) the $n$-th Taylor coefficient of this  derivative  has a  denominator dividing $d^n$ for a suitable integer $d\neq 0$. Actually, a direct calculation (similar and simpler than the one in \S  \ref{SS.explicit}) shows that, since $Q(0)=4$, such a denominator divides $d4^{2n}$, where $d\neq 0$ is a common denominator for the coefficients of $R(x)$.

Hence $d4^{2n}nf_n$ is an integer for all $n\ge 0$; consequently, if $l(n)$ denotes the least common multiple of $1,2,\ldots ,n$, we see that $d4^{2r}l(n)f_r$ is an integer for each $r\le n$. 

Now, $S(x)=f(x)s(x)$, whence  $C_n=
f_0c_n+f_1c_{n-1}+\ldots +f_{n-1}c_1$. Since $2^{2n-2}c_n$ is an integer (as remarked above), we see that $d4^{2n}l(n-1)C_n$ is an integer for all $n$, proving the sought assertion.

\end{proof}


\begin{rem} \label{R.G-f}
(i) The same method outlined in 
\ref{SS.explicit} allows to obtain  asymptotic expressions  for the coefficients $f_n$ and $C_n$, similar to the one written down above for the $c_n$.

(ii) Denominators of coefficients of power-series solutions of linear differential equations over $\Q(x)$ appear in the very definition of $G$-functions: see for instance \cite{[DGS]}, p. 264; for example, the present Theorem \ref{T.converse}(i) implies that $S(x)$ is of this type. For $G$-functions and their arithmetic properties, a vast theory has been developed. For some aspects related to the present context, see for instance \cite{[DGS]}, especially Chapters VII, VIII, and \cite{[A]}.
\end{rem}

\medskip

\section{A question of P\`olya and its evolutions}\label{SS.polya}

In this section we shall discuss a question of P\'olya, strictly related to our issues, especially with Theorem \ref{T.converse}(ii), and with Theorem \ref{T.modp}, a part of which shall be proved. 

\medskip

\noindent{\bf Integrals of algebraic functions and a question of P\`olya}  It is a result of 
G. P\`olya 
that {\it if  a power series representing a rational function has an  indefinite integral having  rational integer coefficients, then  this integral is also a rational function}; see \cite{[P]}, Satz I.

We pause to discuss briefly this simple but interesting issue. Since the indefinite integral of a rational function is anyway the sum of a rational function and a linear combination of   `logarithmic terms', the above  assertion readily reduces to the following one:  

{\it  for  algebraic numbers $\alpha_i,\beta_i$ a function of the shape $\sum_{i=1}^h\alpha_i\log (1-\beta_ix)$  cannot have all its Taylor coefficients (algebraic) integers, 
unless it vanishes.}  

In turn, this amounts to show that  if $h\ge 1$, $\alpha_i\neq 0$ and the $\beta_i$ are nonzero and pairwise distinct, then $\sum_{i=1}^h\alpha_i\beta_i^n$ cannot be always divisible by $n$.\footnote{ We refer to the ring of algebraic integers or to a ring of $S$-integers of an appropriate number field.} We sketch a simple argument to prove this last assertion:  let us assume the contrary, setting $n=p,2p,\ldots ,hp$, for $p$ a large prime number. We obtain that $\sum_{i=1}^h\alpha_i\beta_i^{pm}\equiv 0\pmod p$ for $m=1,2,\ldots ,h$. But then, if $p$ is so large to be coprime with 
the $\alpha_i$, a Vandermonde argument shows that $p$ divides the product of $\beta_i^p-\beta_j^p$ for $i\neq j$; but $\beta_i^p-\beta_j^p\equiv (\beta_i-\beta_j)^p\pmod p$, and then for large $p$  the product of $\beta_i-\beta_j$ would be also divisible by $p$. However this is again impossible  because the $\beta_i$ are pairwise distinct.

\medskip

This  sketch is a  version of  the original argument by P\`olya in \cite{[P]}. Another argument comes on looking directly at the residues of the rational differential in question, and shall be given in a moment and also later in a more general setting. Before this, we pause to prove the result of Honda and Katz mentioned in the Introduction, which shall be relevant and is of interest in itself. 

\subsection{A result of Honda and Katz} \label{SSS.honda-katz} We let $K$ be a function field of dimension $1$ over a finite field $\F_q$ of characteristic $p$.  We remark at once that in the applications below $K$ shall be the reduction modulo $p$ of a function field over $\Q$, which we shall denote by the same letter, or occasionally by $K_p$.

We let $x$ be local parameter at some place of $K$. \footnote{ We tacitly assume that $K/\F_q$ is regular, so $K$ is the function field of a nonsingular curve over $\F_q$; by `place' we refer here to a point of degree $1$ over $\F_q$.} 
After extension of the ground  field if necessary, this amounts to ask that the differential $\d x$ is not zero, or, equivalently, that $K/\F_q(x)$ is separable. Then $K$ is a finite primitive extension of $\F_q(x)$, and may be embedded in $\F_q((x))$ through the said place. 

We set $D:=\d /\d x$; the kernel of $D$ in $\F_q((x))$ is plainly $\F_q((x^p))=(\F_q((x)))^p$, and the kernel of $D$ in $K$ is $K\cap \F_q((x^p))$. As already remarked, this contains $K^p:=\{a^p:a\in K\}$ and thus must coincide with $K^p$ because $[K:K^p]=p=[\F_q((x)):\F_q((x^p))]$. 

 We finally let $\Gamma:=a_0D^n+\ldots +a_{n-1}D+a_n$ be a linear differential operator of order $n$ over $K$, so $a_i\in K$, $a_0\neq 0$. We have the following result, a rephrasing of a small part   of a general theory  due to Honda and Katz, for which we offer an  argument which is apparently different. (See also  \cite{[H]}, \cite{[K]} and \cite{[DGS]}.)

\begin{prop}\label{T.honda-katz} (i) Suppose that $\Gamma(\phi)=0$ has  $m$ solutions in $\F_q((x))$, linearly independent over $\F_q((x^p))$. Then it has  $m$ solutions in $K$, linearly independent over $K^p$.

(ii) Suppose that, for an $u\in K$, $\Gamma(\phi)=u$ has a solution $\phi\in\F_q((x))$. Then it has a solution in $K$.

\end{prop}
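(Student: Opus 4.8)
The plan is to reduce both statements to the invariance of matrix rank under field extension, exploiting that $\Gamma$ commutes with multiplication by elements of $\ker D$. First I would pin down the linear algebra of the situation. Since $K/\F_q(x)$ is separable, $D=\d/\d x$ carries $K$ into $K$ (differentiate a separable minimal polynomial of a primitive element), so $\Gamma$ is an additive, $\F_p$-linear endomorphism of both $K$ and $\F_q((x))$. Moreover $1,x,\dots ,x^{p-1}$ is simultaneously a basis of $K$ over $K^p$ and of $\F_q((x))$ over $\F_q((x^p))$: these powers are linearly independent over $\F_q((x^p))$, hence over the subfield $K^p$, and by the facts recalled just above $[K:K^p]=p=[\F_q((x)):\F_q((x^p))]$. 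Consequently every element of $\F_q((x))$ has a unique coordinate vector in $\F_q((x^p))^p$, and it lies in $K$ precisely when this vector lies in $(K^p)^p$.

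The crucial observation is that $\Gamma$ is $\F_q((x^p))$-linear on $\F_q((x))$ and $K^p$-linear on $K$: for $c\in\ker D$ one has $D^i(c\phi)=cD^i(\phi)$, hence $\Gamma(c\phi)=c\Gamma(\phi)$. Writing elements in the coordinates above, $\Gamma$ therefore acts through a $p\times p$ matrix; and since $\Gamma(x^j)\in K$ for each $j$, the entries of this matrix lie in $K^p$. Thus a single matrix $M$ with entries in $K^p$ represents $\Gamma$ both on $K$ (acting on $(K^p)^p$) and on $\F_q((x))$ (acting on $\F_q((x^p))^p$), and any $u\in K$ corresponds to a coordinate vector $w\in (K^p)^p$.

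Part (i) now follows at once: the solution space of $Mv=0$ has dimension $p-\operatorname{rank}(M)$ over any field containing the entries of $M$, and $\operatorname{rank}(M)$ is the same over $K^p$ and over $\F_q((x^p))$ since it is read off from the vanishing of minors, which are elements of $K^p$. Hence the number of $K^p$-linearly independent solutions of $\Gamma\phi=0$ in $K$ equals the number of $\F_q((x^p))$-linearly independent ones in $\F_q((x))$, and in particular is $\ge m$ as soon as the latter is. Part (ii) follows similarly: the inhomogeneous system $Mv=w$, with $M$ and $w$ defined over $K^p$, is solvable over $\F_q((x^p))$ if and only if $\operatorname{rank}(M)=\operatorname{rank}(M\mid w)$, a condition unaffected by passing to the extension field; so a solution $\phi\in\F_q((x))$ of $\Gamma\phi=u$ forces a solution $\psi\in K$.

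The only step I expect to require care is the verification that $M$ has entries in $K^p$, i.e.\ that $\Gamma$ genuinely preserves $K$ and not merely $\F_q((x))$; this is exactly where $D(K)\subseteq K$ and the fact that $1,\dots ,x^{p-1}$ is a $K^p$-basis of $K$ (not just an $\F_q((x^p))$-basis of $\F_q((x))$) enter. Everything else is formal linear algebra. One may also phrase the argument functorially, observing that the multiplication map identifies $\F_q((x))$ with $K\otimes_{K^p}\F_q((x^p))$ and that $\Gamma$ on this ring is $(\Gamma|_K)\otimes\mathrm{id}$, so that kernels, images, and their intersections with $K$ all commute with the flat base change $K^p\to\F_q((x^p))$; this is the form in which the result generalizes, but the matrix description keeps the proof self-contained.
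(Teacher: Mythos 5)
Your proof is correct and takes essentially the same route as the paper's: both decompose elements along the basis $1,x,\dots,x^{p-1}$ of $K/K^p$ (equivalently of $\F_q((x))/\F_q((x^p))$), turn $\Gamma$ into a $p\times p$ linear system defined over $K$ (you keep the coordinates in $K^p$, the paper works with their $p$-th roots), and conclude by invariance of rank and of solvability of a linear system under the field extension $K^p\subset\F_q((x^p))$. The only difference is expository --- you spell out the $\ker D$-linearity of $\Gamma$, the fact that the matrix entries lie in $K^p$, and the rank bookkeeping, all of which the paper leaves implicit.
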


\begin{proof} We may write $K=K^p+K^px+\ldots +K^px^{p-1}$, because $1,x,\ldots ,x^{p-1}$ are clearly linearly independent over the constant field $K^p$, and thus a basis for $K/K^p$. These powers of $x$ are also a basis for $\F_q((x))/\F_q((x^p))$, which is the fundamental fact leading to the conclusion.

 Now, if $\phi\in \F_q((x))$, we may express $\phi=b_0^p+b_1^px+\ldots +b_{p-1}^px^{p-1}$ with   $b_i\in\F_q((x))$, and if $\phi\in K$ this expression has the $b_i$ also in $K$. 

Expressing in a similar way the coefficients $a_0,\ldots ,a_n$, it is  very easy to see that the differential equation $\Gamma(\phi)=0$   may be written as a homogeneous linear $p\times p$ system over $K$, in the unknowns $b_0,\ldots ,b_{p-1}$. 
The assumption says that the space of solutions of  this linear system in $\F_q((x))$ has dimension at least $m$ over $\F_q((x))$. But since the system is defined over $K$, the kernel in $K$ has again dimension at least  $m$ over $K$. Now, using the said correspondence with the solutions of the differential equation, this proves the first assertion.

The second one is obtained in the same way.

\end{proof}

\begin{rem} \label{R.honda-katz} (i) Note that in the homogeneous case we may multiply a solution by any element of $\F_q[x^p]$ and still obtain a solution; in particular, we may achieve that the said $m$ independent solutions in $K$ are integral over $\F_q[x]$; so, if $K=\F_q(x)$ we may assume that they are polynomials.

(ii) Of course, the dimension of the space of solutions over the constant field cannot exceed the order $n$ (which can be proved on considering the Wronskian matrix; see \cite{[DGS]}). So if $m=n$ the conclusion guarantees a full system of solutions in $K$. 

(iii)  The proof also shows that any given solution in $\F_q((x))$ of the homogeneous system can be approximated up to any prescribed order with a solution in $K$. See also Lemma 1 of \cite{[H]} for this more precise conclusion.

(iv) Much the same argument yields also existence results for non-linear differential equations; this time, for instance,  a (nonzero) solution in $\F_q((x))$ shall yield a (nonzero) solution in a separably algebraic extension of $\F_q(x)$.  (For the separability, which is quite relevant here, one has just to note in the proof that if an algebraic extension of $k(x)$ may be embedded in $k((x))/k(x)$ then it is separable.)


\end{rem}


\begin{example}\label{E.pol} We have noted  in \S \ref{SS.explicit} that $s(x)=y(x)^{-p}2x(2x+1)Q(x)^{p-1\over 2}$. Hence the polynomial $x(2x+1)Q(x)^{p-1\over 2}\in \F_p[x]$, of degree $2p$, is a nonzero solution of the differential equation (\ref{E.diff.eq.}), considered over $\F_p$ (a fact also immediately checked directly). For $p\neq 2, 5, 13$ this polynomial is easily seen not to have roots of multiplicity divisible by $p$, hence  is a polynomial solution of least degree, thus unique up to a constant factor. (See for this the part of the proof of Theorem \ref{T.modp} at the end of \S \ref{SS.polya}.)

\end{example}

\bigskip

Let us now go back to P\`olya's issue.

We note that the given argument (and also the alluded one concerning residues) proves the  much stronger assertion obtained by   assuming the weaker fact that  
{\it the said Taylor coefficients lie in $\Z_p$  for infinitely many $p$}, rather than in $\Z$.

\medskip

We can go further, and even replace $\Z_p$ with $\F_p$; let us see how.  Denoting $g(x)$ the rational function in question, we are seeking a solution of $f'(x)=g(x)$, which leads to $ f''(x)-{g'(x)\over g(x)}f'(x)=0$. This is a second-order differential equation over $\Q(x)$; it is {\it reducible} and  its non-constant solutions yield nonzero solutions of the former inhomogeneous equation. Reducing the  latter equation   modulo $p$ for large primes $p$, we are directly linked with the Grothendieck conjecture.\footnote{ See \cite{[vdP]} for a discussion of the Grothendieck conjecture for second order operators defined over $\Q(x)$.}

Suppose now that 
there is a nonconstant solution 
 in $\F_p((x))$ of the inhomogeneous  equation $Y'=g$; then, 
 part (ii) of Proposition \ref{T.honda-katz}   implies the existence of a 
 solution $f_p(x)\in \F_p(x)$. 
Note that this amounts to the fact that the coefficients of powers $x^{mp-1}$ in a power-series expansion of $g(x)$ vanish modulo $p$.

 
Then the differential $g(x)\d x$ is exact modulo $p$, i.e. as a differential of $\F_p(x)$, which implies that its residues  (modulo $p$) vanish.  Now, if  $p$ is sufficiently large to ensure that the zeros/poles of $g(x)\d x$ (as a differential in characteristic $0$) do not collapse by reduction,  the  residues of the differential considered modulo $p$ are the reductions (modulo a place above $p$ of a suitable number field) of the residues of $g(x)\d x$ considered as a complex differential. 
 
 If this happens for infinitely many primes, then the  residues  of $g(x)\d x$, considered in characteristic zero, must vanish, so the differential $g(x)\d x$ is exact over $\Q$.

 This argument yields a second proof of P\`olya's result, now showing  that  
 
 \smallskip
 
  {\it If for infinitely primes $p$ (the reduction of) $g(x)$ has a power series indefinite  integral in $\F_p((x))$, then the indefinite integral of $g(x)$ in $\Q((x))$ is a rational function.}. 
  

  \smallskip
  
  Note that the assumption now is indeed {\it a priori} weaker than P\`olya's: this is shown for instance by the following 
  
  \begin{example}\label{Ex.F_p-Z_p}  Let us consider the rational function $g(x)={p\over 1-x}$. Naturally, $g(x)\equiv 0\pmod p$, so it has an indefinite integral in $\F_p((x))$. However, any indefinite integral of $g(x)$ in characteristic $0$ is given, up to the addition of a constant, by the series $\sum_{m=1}^\infty {p\over m}x^{m}$.  Hence the series does not lie in $\Z_p[[x]]$ and $g(x)$ has no indefinite integral in $\Z_p[[x]]$.
  
 This is a rather trivial example, and another one, much more significant, appears in connection with a differential form $\omega$ of the first kind on an elliptic curve, for instance defined over $\Q$: as we shall recall in Example \ref{Ex.elkies} below, there are infinitely many primes for which $\omega$ is exact modulo $p$, but, if $t=x/y$ denotes the standard Weierstrass local parameter at the origin,  $\omega$ does not become exact in $\Z_p[[t]]$. \footnote{ Proposition \ref{T.honda-katz}, or the Cartier operator  introduced below, show that being exact modulo $p$ does not depend on the local parameter used to express the differential form, and, less obviously, does not depend on the chosen  {\it place}. It may be easily shown that being exact in $\Z_p[[x]]$ does not depend on the local parameter provided the local parameters may be expressed each other in terms of series over $\Z_p$. 
 In the Appendix we shall briefly consider in special cases how being exact in $\Z_p[[x]]$ depends on the chosen place.}
  \end{example}
  
\medskip




\medskip

Now, apart from these different shapes of the assumptions, P\`olya suggested that a result   analogous to his own one given above  should hold, more generally,  for {\it algebraic functions} in place of {\it rational functions}. The above argument certainly does not extend in this sense, and indeed the general assertion is extraordinarily more difficult. A general proof was given by Andr\'e, as a corollary of results proved  in his book \cite{[A]} (see Corollary at p. 149) and then again in the paper \cite{[A2]} (see Proposition 6.2.1), where more details are given.  We shall soon state this in more precision; now we note that the alternative  argument,  using residues, that we have given for P\`olya's result in the case of rational functions, proves exactly in the same way the following assertion in the direction of P\`olya's question, where we restrict to the rational number field merely for the sake of simplicity, and where (as always) by {\it exact in $k((x))$} we mean {\it of the shape $\d h:= h'(x)\d x$ for some $h\in k((x))$}: 

\begin{prop}\label{P.residues} Suppose that $K_p$ is a function field of dimension $1$ defined over $\F_p$, with a rational function $x$ which is a local parameter at some place of $K_p$, rational over $\F_p$. Let us embed $K_p$ in $\F_p((x))$ through this place. Assume that  the differential $g\cdot \d x$  
 is exact in $\F_p((x))$, for a $g\in K_p$. 
 Then the differential has no residues. 
 
 In particular, if $K$ is a function field over $\Q$, if $g,x\in K$ and if  $g\cdot \d x$ is exact in the reduction $K_p$ of $K$  for infinitely many primes,  it has no residues as a complex differential. 
\end{prop}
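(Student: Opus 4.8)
The plan is to obtain the first assertion almost immediately from the Honda--Katz result of Proposition~\ref{T.honda-katz}(ii), combined with the elementary fact that an exact differential on a curve has vanishing residue at every place. Since $x$ is a local parameter at the chosen place we have $\d x\neq 0$ and $K_p/\F_p(x)$ is separable, so the setup of \S\ref{SSS.honda-katz} applies with $\Gamma=D=\d/\d x$ and $u=g\in K_p$. To say that $g\cdot\d x$ is exact in $\F_p((x))$ is to say that $D\phi=g$ has a solution $\phi\in\F_p((x))$; Proposition~\ref{T.honda-katz}(ii) then yields a solution $h\in K_p$, so that $g\cdot\d x=\d h$ is exact already as a differential of the function field $K_p$. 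Now for any $h\in K_p$ and any place $P$, expanding $h=\sum_i a_i\pi^i$ in a local parameter $\pi$ at $P$ gives $\d h=\sum_i i\,a_i\,\pi^{i-1}\,\d\pi$, whose $\pi^{-1}\d\pi$-coefficient is $0\cdot a_0=0$ in any characteristic; hence $\res_P(\d h)=0$. This proves that $g\cdot\d x$ has no residues on $K_p$.

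For the second statement I would argue by reduction, in the spirit of the paragraphs preceding the proposition. Fix a number field $L$ such that all the poles of the complex differential $g\cdot\d x$, and hence all its residues $\rho_1,\dots,\rho_t$, are defined over $L$. There is a finite set $\Sigma$ of rational primes outside of which everything reduces well: for $p\notin\Sigma$, the coefficients of $g$ and $x$ are integral at every place of $L$ above $p$, $x$ remains a local parameter at (the reduction of) the chosen place of $K_p$, the poles of $g\cdot\d x$ reduce to pairwise distinct places of the same order, and each $\rho_i$ is integral at the places of $L$ above $p$. For such $p$ the residue of the reduced differential $g\cdot\d x$ on $K_p$ at the reduction of the $i$-th pole equals the reduction $\bar\rho_i$ of $\rho_i$ modulo a place of $L$ over $p$ --- this is the standard compatibility of the residue with good reduction, read off from the description of $\res$ as a coefficient of a local Laurent expansion, which itself commutes with reduction at the good primes. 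By the first part, whenever $g\cdot\d x$ is exact on $K_p$ all the $\bar\rho_i$ vanish; so, if $g\cdot\d x$ is exact on $K_p$ for infinitely many $p$, each $\rho_i$ is divisible by primes of $\O_L$ lying above infinitely many distinct rational primes. A nonzero element of $L$ is divisible by only finitely many primes of $\O_L$, whence $\rho_i=0$ for every $i$: $g\cdot\d x$ has no residues as a complex differential.

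The step I expect to be the main obstacle --- or at least the only one requiring genuine care --- is this last compatibility of residues with reduction together with the bookkeeping of the exceptional set $\Sigma$. One has to fix an integral model of the curve and of the differential $g\cdot\d x$ and verify that each of the bad events (``two poles merge'', ``a pole gains order'', ``$\d x$ vanishes after reduction'', ``some $\rho_i$ is not $p$-integral'', ``$g$ or $x$ acquires an unexpected pole'') cuts out only finitely many primes, so that $\Sigma$ is finite; at the good primes the residue, being the coefficient of $\pi^{-1}\d\pi$ in a Laurent expansion that commutes with reduction, necessarily reduces to the residue of the reduced differential. Everything else --- the use of Honda--Katz, the vanishing of residues of exact differentials, and the fact that a nonzero algebraic number is a unit at all but finitely many places --- is formal.
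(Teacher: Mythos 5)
Your argument is correct and follows essentially the same route as the paper: apply Proposition \ref{T.honda-katz}(ii) to upgrade exactness in $\F_p((x))$ to exactness in $K_p$ (which is exactly what is needed to kill the residues at \emph{all} places, not just the embedding place), note that an exact differential has zero residue everywhere in any characteristic, and then for the global statement use good-reduction compatibility of residues together with the fact that a nonzero algebraic number is a unit at all but finitely many places. The only difference is that you spell out the bookkeeping of the exceptional set $\Sigma$ in more detail than the paper, which simply waves at ``large primes, so large to ensure that the zeros/poles of the differential do not collapse by reduction''; this is a harmless elaboration, not a different method.
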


Of course,  if $g\in K\subset \Q((x))$ may be written as $f'(x)$ for a series $f\in\Q((x))$ which lies in $\Z_p[[x]]$, 
then $g\cdot \d x$ is exact in $\F_p((x))$ and the proposition may be applied, proving that it has no residues modulo $p$.

We have avoided any technicalities related to good reduction, leaving the easy details for this case to the interested reader. \footnote{ Other facts on good reduction are by far more subtle; there is of course relevant literature, starting e.g. with Deuring and Shimura, which however we do not even refer to in precision since this falls  out of our present purposes.}

\begin{proof}
A proof follows as in the case of rational functions: if $g(x)\d x$ is exact as a differential in $\F_p((x))$, then it is exact in $K_p$, in view of Proposition \ref{T.honda-katz} (ii); then it has no residues  at any place of $K_p$. (Note that a direct use of the assumption  implies this conclusion merely for the place giving the embedding of $K_p$ in $\F_p((x))$.)

Now  for large primes, so large to ensure that the zeros/poles of the differential do not collapse by reduction, the residues modulo $p$ are obtained just by reduction of the residues in characteristic $0$; and  then the residues of $g \cdot \d x$ vanish modulo $p$ (or better, modulo some prime of  a number field containing the residues) for infinitely many primes, whence they must vanish. 
\end{proof}

\begin{rem}\label{R.residues} (i)  Of course the  conclusion about residues is generally weaker than   in P\`olya's expectation. Indeed, in non-rational function fields of curves, the differentials without residues (i.e., {\it of the second kind}) are not necessarily exact. Actually, with such an assumption we cannot expect P\`olya's conclusion: an example  (already considered above)  shall be pointed out below in no. \ref{Ex.elkies} and concerns the differential form $\omega$ of the first kind on an elliptic curve over $\Q$; in the case of Complex Multiplication, the assumption to be exact modulo $p$ is even verified by a set of primes of positive density. An explicit instance  is given by the curve $y^2=x^3-x$ with the form $\omega=\d x/y$, which is exact modulo all primes $\equiv 3\pmod 4$, but  is certainly not exact in $\C(x,y)$ and, for any such prime,   not even in $\Z_p[[t]]$, where $t=x/y$ is the `standard' local parameter at the origin.

(ii) Naturally, if  the function field $\Q(x,g(x))$ is rational, i.e. has genus zero, then, having no residues amounts to be exact, so indeed Proposition \ref{P.residues}  yields an affirmative answer to P\`olya's question for this special case, even if the function $g(x)$ itself is not rational. On the other hand, this may be derived directly from P\`olya's result, as follows. By assumption, the said field is of the shape $\Q(t)$, and we  we may also assume that the function $t$ vanishes (and is thus a local parameter) at the place corresponding to the embedding of the field in $\Q((x))$. By assumption, the differential $\xi:=g(x)\d x$ is of the shape $\d f(x)$ where $f\in\Z[[x]]$. Now, we have $x=r(t)$ for a certain rational function $r\in\Q(t)$ with $r(0)=0$; hence the formal series $f(r(t))$ yields a series $F(t)\in \Q[[t]]$ by expanding each term $r^n(t)$ in $\Q[[t]]$, this being justified since $r^n(t)$ has order $\ge n$ at $t=0$. In this expansion the coefficients are $p$-integral except at finitely many primes, and hence the same holds for $F(t)$. Now, we have that $F'(t)=g(r(t))r'(t)$ is a rational function of $t$. Then an application of P\`olya's result gives what is required. 

\end{rem}

\medskip

We may now apply Proposition \ref{P.residues} to our main issue, to obtain part of  Theorem \ref{T.modp}. Maintaining the notation for that statement,  we prove:

\begin{prop}\label{P.modp}  The space $W_p$ is infinite dimensional over $\F_p$. For $p\neq 2,3,5,13$, the space $V_p$ is contained in the hyperplane defined by $6\bar C_4+\bar C_2+\bar C_1=0$.

\end{prop}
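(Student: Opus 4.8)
The plan is to prove the two assertions of Proposition~\ref{P.modp} separately, both via reduction modulo $p$ of the differential-equation picture developed above.

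First I would establish that $W_p$ is infinite-dimensional. The recurrence \eqref{E.recurrence} has leading coefficient $4(n+4)$, which is a unit in $\F_p$ unless $p\mid n+4$. Thus, given initial data $\bar C_0,\dots,\bar C_4$, the entries $\bar C_n$ are determined for $n\le p$ (more precisely, up to $n$ with $p\nmid n-1$, using the rewriting $4((n+5)-1)c_{n+5}+\cdots$), and whenever $p\mid n-1$ we have a free choice for $\bar C_n$ provided the compatibility relation $16\bar C_{n-1}+\bar C_{n-2}+9\bar C_{n-3}+16\bar C_{n-4}+8\bar C_{n-5}=0$ holds; by the usual dimension count this condition on the $4$-dimensional space of initial data cuts it down, but once we are past one such index we again have four free parameters to propagate, and a free choice recurs at every $n\equiv 1\pmod p$. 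So the dimension of $W_p$ is at least one more with each new congruence class representative, hence infinite. I would present this as: the evaluation map $W_p\to\F_p^{N}$ recording $(\bar C_n)_{n\le N}$ has image of dimension growing without bound in $N$, since each window of length $p$ contributes (generically) at least one new free parameter; alternatively, exhibit explicitly infinitely many linearly independent solutions supported on disjoint blocks.

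For the hyperplane statement, the key is the polynomial solution exhibited in Example~\ref{E.pol}: over $\F_p$ (for $p\neq 2,5,13$) the equation \eqref{E.diff.eq.} has the nonzero solution $\bar s(x):=x(2x+1)Q(x)^{(p-1)/2}$, a polynomial of degree $2p$, which up to a constant factor is the reduction of the generating function $s(x)$ (using $s(x)=y(x)^{-p}2x(2x+1)Q(x)^{(p-1)/2}$ and that $y(x)^{-p}$ contributes only powers $x^{mp}$). For a general solution $(\bar C_n)$ with generating function $\bar S(x)=\sum\bar C_nx^n$, the computation preceding \eqref{E.inh} reduces mod $p$ to the identity $A(x)x\bar S'(x)-B(x)\bar S(x)=\bar R(x)$ with $\bar R(x)$ the reduction of the degree-$\le 4$ polynomial in $\bar C_0,\dots,\bar C_4$; equivalently $\mathcal L(\bar S/\bar s)\,dx$ equals $\bar R(x)\,dx/(A(x)\bar s(x))$, a rational differential on the reduced curve. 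The point is then: when $6\bar C_4+\bar C_2+\bar C_1\neq 0$, does this differential acquire a nonzero residue, contradicting exactness? Here I would invoke Proposition~\ref{P.residues}: since $\bar S(x)\in\F_p[[x]]$ exists, the differential $d(\bar S/\bar s)=\bar R(x)\,dx/(A(x)\bar s(x))$ is exact in $\F_p((x))$, hence by Proposition~\ref{P.residues} has no residues on $K_p$. I would then compute the residues of $\bar R(x)\,dx/(A(x)\bar s(x))$ at the relevant poles —- most cleanly at the points of $E$ over $x=0$ and/or at $\infty_\pm$ —- and extract a linear form in $\bar C_0,\dots,\bar C_4$ whose vanishing is forced; the claim is that this linear form is (a nonzero multiple of) $6\bar C_4+\bar C_2+\bar C_1$. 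Concretely, $A(x)=x(1+2x)Q(x)$ and $\bar s(x)=x(2x+1)Q(x)^{(p-1)/2}$, so $A(x)\bar s(x)=x^2(1+2x)^2Q(x)^{(p+1)/2}$; the residue at a point over $x=0$ of $\bar R(x)\,dx/(x^2(1+2x)^2Q(x)^{(p+1)/2})$ involves $\bar R(0)$ and $\bar R'(0)$ together with the first two Taylor coefficients of $(1+2x)^{-2}Q(x)^{-(p+1)/2}$ at $x=0$, and collecting these with $\bar R(x)=-4C_0-16C_0x+(4C_2-8C_1)x^2+\cdots$ should yield exactly the combination $6\bar C_4+\bar C_2+\bar C_1$ up to a unit. (Using the convention $C_0=0$ of Remark~\ref{R.convention} simplifies this: then $\bar R(x)=x^2\tilde R(x)$ and the pole at $x=0$ has lower order, shifting the computation to a residue that reads off the constant and linear coefficients of $\tilde R$ against the expansion at infinity or at $x=-1/2$, again producing the asserted form.)

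The main obstacle I anticipate is the residue computation itself: one must pick the most economical place at which to read off a residue (the two points above $x=0$, or $\infty_+$, or $x=-1/2$), handle the high-order pole $Q(x)^{(p+1)/2}$ correctly —- noting that $Q(x)^{(p+1)/2}=Q(x)\cdot Q(x)^{(p-1)/2}$ and that $Q(x)^{(p-1)/2}$ is, modulo $p$, essentially the relevant power-series-free part so only the first couple of Taylor coefficients of the remaining factor matter —- and verify that the resulting $\F_p$-linear constraint is precisely $6\bar C_4+\bar C_2+\bar C_1=0$ rather than some other hyperplane. The exclusion of $p=2,5,13$ enters exactly where it must: these are the primes of bad reduction for $Q$, at which $\bar s(x)$ degenerates and Proposition~\ref{P.residues} (which needs good reduction so that residues mod $p$ are reductions of characteristic-zero residues) does not apply; the footnote's remark that the condition becomes $\bar C_3-\bar C_2-\bar C_1=0$ for $p=5$ is consistent with the residue computation changing shape there. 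One should also double-check that $\bar s(x)$ really is, up to scalar, the reduction of $s(x)$ and in particular is nonzero, so that the division $\bar S/\bar s$ makes sense in $\F_p((x))$ —- this is where Example~\ref{E.pol}'s observation that $x(2x+1)Q(x)^{(p-1)/2}$ has no root of multiplicity divisible by $p$ (for $p\neq 2,5,13$) is used.
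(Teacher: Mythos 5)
Your treatment of the hyperplane condition is essentially the paper's own argument: form $\bar f=\bar S/\bar s$, observe that $\d\bar f$ is simultaneously an exact differential of $\F_p[[x]]$ and a rational differential on the reduced curve, invoke Proposition \ref{P.residues} to conclude it has no residues, and extract a linear constraint. One point to fix in the execution: with the normalization $\bar C_0=0$ the form is $\tilde R(x)\omega/(2(1+2x)^2)$, which is regular over $x=0$ and at $\infty_\pm$ (since $\omega$ is regular everywhere on $E$ and $\tilde R/(1+2x)^2$ has no pole at infinity), so the places you propose to use first yield no information. The only poles, hence the only possibly nonzero residues, are at the two points with $x=-1/2$; there $Q(-1/2)=65/16$ and $Q'(-1/2)=0$, so the residue is a unit multiple of $\tilde R'(-1/2)=-2(6\bar C_4+\bar C_2+\bar C_1)$, exactly as in the paper. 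Note also that only the purely characteristic-$p$ half of Proposition \ref{P.residues} is needed here, so no reduction-of-residues argument enters; the primes $2,5,13$ are excluded because $Q$ must remain separable modulo $p$.

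The genuine gap is in your argument that $W_p$ is infinite-dimensional. The free choice of $\bar C_n$ at each $n\equiv 1\pmod p$ does not by itself increase the dimension: each such free parameter is exactly offset by the compatibility constraint occurring at the next index $\equiv 1\pmod p$, so over a window of length $p$ the truncated solution space gains one dimension and loses (up to) one, and its dimension need not grow at all; a priori the constraints could even become unsatisfiable, or $W_p$ could be finite-dimensional. The phrase ``generically at least one new free parameter'' is precisely the assertion that has to be proved, and nothing in your sketch proves it. The paper closes this by exhibiting an explicit infinite family: since $\d/\d x$ is $\F_p((x^p))$-linear, $\lambda(x^p)\bar s(x)$ satisfies the homogeneous equation \eqref{E.diff.eq.} for every $\lambda\in\F_p[[x]]$, so its coefficient sequence lies in $W_p$; taking $\lambda(x^p)=x^{mp}$, $m=0,1,2,\dots$, gives elements with distinct orders of vanishing, hence linearly independent. (For $p=2$, where $\bar s$ does not reduce, one uses $\lambda(x^2)(1+x)$ instead.)

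Your alternative suggestion of ``solutions supported on disjoint blocks'' is the right idea, but it is this same family in disguise and needs the same input: starting from zeros and a single $1$ at a free index $n\equiv 1\pmod p$, the mod-$p$ periodicity of the coefficients shows the continuation is the shift of $(\bar c_k)$, i.e.\ the coefficient sequence of $x^{n-1}\bar s(x)$; but to know that the compatibility constraints at all subsequent free indices are satisfied, you need the existence of the global solution $\bar s$ --- that is, the $p$-integrality of the $c_n$ from Theorem \ref{T.congr} --- which your sketch never invokes. Without that, the existence of even one nonzero element of $W_p$ vanishing up to a prescribed index is unproved.
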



\begin{proof} 
For $p>2$, let  $\bar s(x)$  denote the reduction of $s(x)$ modulo $p$, which makes sense by Theorem \ref{T.congr}. Observe that $W_p$ contains all sequences arising from the coefficients of a  series of the shape $\lambda(x^p)\bar s(x)$, for any choice of $\lambda\in\F_p[[x]]$. This yields infinite dimension. For $p=2$, the same argument works on using series $\lambda(x^2)(1+x)$.

For the other assertion,  let $(\bar C_n)\in W_p$ 
and put $\bar S(x)=\sum_{n=0}^\infty \bar C_nx^n\in \F_p[[x]]$. \footnote{ We stress again that here and in the whole paper by `$\bar S(x)$' we generally denote a series in $\F_p((x))$, not necessarily meant to be the reduction of a series in characteristic zero verifying the recurrence, unless this is stated explicitly.} We may assume as above that $\bar C_0=0$ and that $\bar S(x)$ is not a constant multiple of   $\bar s(x)$, for otherwise the conclusion would hold, since $6c_4+c_2+c_1=0$. 

We may consider as above the series $\bar f(x)=\bar S(x)/\bar s(x)$, obtaining that  the differential form ${\tilde R(x)\over 2(1+2x)^2}\omega$ on the right of (\ref{E.derivative}) is exact in $\F_p[[x]]$,  
where now $\tilde R(x):=(4\bar C_2-8\bar C_1)+(8\bar C_3+\bar C_1)x+ (12\bar C_4+8\bar C_3+2\bar C_2+3\bar C_1)x^2$.


An application of Proposition \ref{P.residues} yields that it has no residues.  An easy computation shows that the residue at any point with $x=-1/2$ is given by  $\tilde R'(-1/2)$, up to a power of $2$; hence $\tilde R'(-1/2)=0$, which amounts to
$12\bar C_4+8\bar C_3+2\bar C_2+3\bar C_1= 8\bar C_3+\bar C_1$, i,.e.
\begin{equation*}
6\bar C_4+\bar C_2+\bar C_1=0,
\end{equation*}
as required.

\end{proof}


We conclude \S \ref{SS.polya} by an estimation of the minimal index $n$ for which $p$ appears in the denominator of $C_n$, when the relevant differential form is not exact modulo $p$. We have

\begin{prop}\label{P.estimate}
There is a constant $l$ with the following property. Suppose that the reduction of the form ${\tilde R(x)\over 2(1+2x)^2}\omega$ on the right of (\ref{E.derivative}) is not exact in $\F_p[[x]]$. Then there is an $n\le l\cdot p$ such that $C_n$ is not $p$-integral.
\end{prop}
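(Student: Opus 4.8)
The plan is to transfer the question to the elliptic curve $E$ reduced modulo $p$ and to use that the obstruction to exactness of a differential with few poles can only appear among its first few Taylor coefficients. We may assume throughout that $p\notin\{2,5,13\}$ (so that $E$ has good reduction) and $p\ge 3$; since only finitely many primes are excluded, $l$ can be enlarged to accommodate them. If one of $C_1,C_2,C_3,C_4$ fails to be $p$-integral there is nothing to prove (take $n\le 4$), so assume $C_1,\dots,C_4\in\Z_p$. Then $\tilde R(x)\in\Z_p[x]$ (notation of Remark \ref{R.convention}), and by the argument proving Theorem \ref{T.converse}(i) — Eisenstein's theorem together with $Q(0)=4$ — the $(i-1)$st Taylor coefficient of $f'(x)=\tilde R(x)/\bigl(2(1+2x)^2y(x)\bigr)$, i.e.\ of the right side of \eqref{E.derivative} divided by $\d x$, has denominator dividing $d\cdot 4^{2(i-1)}$ with $p\nmid 4d$; hence $f'(x)\in\Z_p[[x]]$. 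Let $\varpi$ denote the reduction modulo $p$ of the form $\tfrac{\tilde R(x)}{2(1+2x)^2}\omega$, a differential on $E/\F_p$, and let $\sum_{i\ge0}c_ix^i\in\F_p[[x]]$ be its $x$-expansion at the place $(0,2)$; this is the reduction of $f'(x)$.

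The key point is that if $\varpi$ is not exact in $\F_p[[x]]$, then $c_{m_0p-1}\neq0$ for some $m_0\le3$. I would argue via the Cartier operator $\mathcal C$ (properties recalled in \S\ref{SS.cartier}): $\varpi$ is exact in $\F_p[[x]]$, equivalently all the $c_{mp-1}$ vanish, if and only if $\mathcal C(\varpi)=0$. Since $\omega$ is everywhere regular and $\tilde R(x)/(1+2x)^2$ is regular away from $x=-1/2$ and — as $\deg\tilde R\le2$ — also at infinity, $\varpi$ is regular on $E$ except for poles of order $\le2$ at the (at most two) points above $x=-1/2$; hence $\mathcal C(\varpi)$ is regular wherever $\varpi$ is, and at each point above $x=-1/2$ has a pole of order $\le1$, since for $p\ge3$ the Cartier operator turns a pole of order $\le2$ into one of order $\le1$. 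Thus, if $\varpi$ is not exact, $\mathcal C(\varpi)$ is a nonzero meromorphic differential on the genus-one curve $E$ whose polar divisor has degree $\le2$; since a canonical divisor on $E$ has degree $0$, its divisor of zeros also has degree $\le2$, so $\mathcal C(\varpi)$ vanishes to order $\le2$ at $(0,2)$. By the local description $\mathcal C\bigl(\sum a_ix^i\,\d x\bigr)=\sum_ja_{jp+p-1}^{1/p}x^j\,\d x$ this says exactly that $c_{jp+p-1}\neq0$ for some $j\le2$, i.e.\ $c_{m_0p-1}\neq0$ with $m_0\le3$; fix $m_0$ minimal with this property.

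To finish, set $C_0=0$ (Remark \ref{R.convention}) and $f(x)=S(x)/s(x)\in\Q[[x]]$, so that $C_n=\sum_if_ic_{n-i}$ (here $c_j$ are the coefficients of $s$, with $c_0=0$) and $f_i=f'_{i-1}/i$, where $f'_{i-1}\in\Z_p$ by the first paragraph. For $i<m_0p$ one checks $f_i\in\Z_p$: this is automatic unless $i=kp$ with $1\le k<m_0$, and then minimality of $m_0$ forces $c_{kp-1}=0$, i.e.\ $p\mid f'_{kp-1}$, whence $f_{kp}=f'_{kp-1}/(kp)\in\Z_p$. On the other hand $f'_{m_0p-1}$ reduces to $c_{m_0p-1}\neq0$, so $v_p(f_{m_0p})=v_p\bigl(f'_{m_0p-1}/(m_0p)\bigr)=-1$. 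Now in $C_{m_0p+1}=\sum_{i\le m_0p}f_ic_{m_0p+1-i}$ the term with $i=m_0p$ equals $f_{m_0p}c_1=f_{m_0p}$, of valuation $-1$, while every term with $i<m_0p$ is a product of two $p$-integers (recall $c_j\in\Z_p$ for $j\ge1$ and $p$ odd). Hence $C_{m_0p+1}$ is not $p$-integral, and $m_0p+1\le3p+1<4p$, so $l=4$ works for all $p\notin\{2,5,13\}$, and we enlarge it for the remaining primes.

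The main obstacle is the uniformity in $p$ of the bound on $m_0$ in the second paragraph. It rests on two features of our situation: that $\varpi$ has poles only above the single value $x=-1/2$, and only of order $\le2$ there, so that $\mathcal C(\varpi)$ has polar degree $\le2$ independently of $p$; and the elementary fact that a canonical divisor on an elliptic curve has degree $0$, which forbids a nonzero differential with few poles from vanishing to high order at $(0,2)$. Everything else is routine bookkeeping once one knows that $f'(x)$ has $p$-integral coefficients and one exploits the minimality of $m_0$.
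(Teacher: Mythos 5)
Your argument is correct, but it takes a genuinely different route from the proof the paper gives for this proposition. The paper works with the $(p-1)$-st derivative: $p$-integrality of $C_n$ for $n\le n_p$ forces $g^{(p-1)}$ to vanish modulo $p$ at $(0,2)$ to order $\ge n_p-p$, while the degree of $g^{(p-1)}$ on the reduced curve grows only linearly in $p$ (each differentiation increases the degree by a bounded amount), so either $g^{(p-1)}\equiv 0\pmod p$ --- which is precisely exactness of $g\,\d x$ in $\F_p[[x]]$ --- or $n_p\le l\cdot p$. You instead locate the obstruction with the Cartier operator: $\mathcal{C}(\varpi)$ has polar degree $\le 2$, hence (the canonical class on $E$ having degree $0$) zero degree $\le 2$, so the first nonvanishing coefficient $c_{mp-1}$ occurs with $m\le 3$. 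This is essentially the paper's own \emph{second} proof, given later as Proposition \ref{P.estimate2} and Remark \ref{R.estimate2}(i); your exploitation of the genus-one divisor identity, in place of the general Hurwitz-type bound there, even yields the sharper constant $4$ in place of $8$. What the paper's first proof buys is independence from the Cartier formalism (introduced only in \S\ref{SS.cartier}) and applicability without change to general curves; what yours buys is a cleaner and quantitatively better bound.

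Three small blemishes, none fatal. First, you use $c_i$ both for the reduced coefficients of $f'$ and for the coefficients of $s$; the clash is flagged but should be removed. Second, for $p=3$ the case $m_0=3=p$ is not covered by ``$k<m_0\le 3<p$'': there $v_p(f_{m_0p})=-2$ rather than $-1$, and $f_{kp}\in\Z_p$ for $k=1,2$ still follows from minimality since $v_3(kp)=1$; so the conclusion survives, but as written you should either say this or exclude $p=3$ with the other small primes. Third, ``enlarging $l$ for the remaining primes'' deserves one sentence: for a fixed prime the reduced form is determined by $(\bar C_1,\dots,\bar C_4)\in\F_p^4$, of which there are finitely many, so the first obstruction index is bounded uniformly over all sequences, and taking $i_0$ minimal with $f_{i_0}\notin\Z_p$ gives $C_{i_0+1}\notin\Z_p$.
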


\begin{proof} 
Let $g(x)=f'(x)=\sum_{n=0}^\infty nf_nx^{n-1}$,  so the assumption says that $g(x)\d x$ is not exact modulo $p$.  
Suppose that $C_n$ is $p$-integral for $n\le n_p$. Then the same holds for $f_n$ and, 
 writing $g(x)=\sum_{n=0}^\infty g_nx^n$, we have that    $p|g_m$ for all $m\equiv -1\pmod p$, $m\le n_p-1$. But then the $(p-1)$-th derivative $g^{(p-1)}(x)$ would vanish modulo $p$ at the origin to an order $\ge n_p-p$. 

Now recall from (\ref{E.derivative}) that $g(x)$ is algebraic. Let us then estimate the degree (as rational functions on the appropriate curve) of its derivatives modulo $p$. In general, let $h(x)$ be an algebraic function in a function field $K$ of a curve,   let  $v$ be a place of $K$, and $t=t_v$ be a local parameter at $v$. Then $v(h'(x))=v(\d h/\d t)-v(\d x/\d t)\ge v(h)-1-v(\d x/\d t)$, where we are using the same letter $v$ to denote the order  function $v(\cdot )$ at the place $v$. 

Now, the sum over $v$ of the positive ones among the $v(\d x/\d t)$ is bounded; 
such a   bound remains true if we consider the reduction modulo $p$, provided $p$ is large enough: indeed, the value $v(\d x/\d t)$ is zero except at the branching points of the function $x$, and since there are only  finitely many of them,  the value of the sum in zero characteristic may be increased by reduction only modulo finitely many primes. In conclusion, since the degree of a function is the number of its poles counted with multiplicity, we have   $\deg (h'(x))\le \deg (h)+m_h+l_2$, where $m_h$ is the number of poles of $h$ and  for a constant $l_2$, even if we reduce the function $h$ modulo $p$, provided $p$ is larger than a constant depending only on $K$ and $x$. Iterating this, and taking into account that the derivative (with respect to $x$) has at most $m_h+m_x$ poles, we find that $\deg(h^{(r)})\le\deg (h)+r(m_h+m_x+l_2)\le (r+1)(\deg(h)+l_3)$, where $l_3$ does not depend on $p$ or $h$ (only on $K,x$). Therefore $\deg (g^{(p-1)})\le p l_4$ for an $l_4$ independent of $p$. 

On the other hand, either $g^{(p-1)}$ vanishes modulo $p$, or its degree is $\ge n_p-p$, since it has a zero of that order at the relevant place where $x=0$ (i.e. the place corresponding to the point $(0,2)$ on $E$).  In the first case the differential form $g\cdot \d x$ is exact modulo $p$. In the second case, $n_p\le (l_4+1) p$, as required. (For the exceptional finitely many primes for which the above argument may fail, such an estimate also holds at the cost of increasing $l$.)

\end{proof}

\begin{rem}\label{R.estimate} We note that  the conclusion in this proposition holds for a general differential form $\xi$ of a function field of a curve over $\Q$ or a number field (with a constant $l$ dependent on $\xi$ in  a way that can be estimated from the given proof). A more explicit estimate shall be given with another method based on the Cartier operator: see Proposition \ref{P.estimate2} below.

Also, we note that the conclusion is not generally true if we merely assume that 
$\xi$ is not exact in $\Z_p[[x]]$. Indeed, as we have seen in Example \ref{Ex.F_p-Z_p}, the condition that the reduction of $\xi$ is exact in $\F_p[[x]]$ is generally weaker than being exact in $\Z_p[[x]]$, and the second of the examples given there, together with the Atkin \& Swinnerton-Dyer congruences that shall be recalled later,  shows that for infinitely many primes the first occurrence of $p$ in a denominator of an indefinite integral corresponds to a term $x^{p^2}$.

We also remark that this result  shall be relevant for Proposition  \ref{C.lowerbound}, proved in the next section.

\end{rem}

\begin{proof}[Continuation of the proof of Theorem \ref{T.modp}]  We go ahead with this theorem, by proving that the space $V_p$ has dimension $2$ or $3$ and that it contains linearly independent vectors given by $(1,2,-1/8,-1/2)$ and $(\bar c_{p+1},\bar c_{p+2},\bar c_{p+3},\bar c_{p+4})$.

For this, we consider the reduction $\bar s(x)\in \F_p[[x]]$ of our series $s(x)$. We write
\begin{equation*}
\bar s(x)=x\sigma_0(x)+x^{p+1}\sigma_1(x)+\ldots +x^{mp+1}\sigma_m(x)+\ldots ,
\end{equation*}
where $\sigma_0,\sigma_1, \ldots$ are polynomials of degree $\le p-1$ over $\F_p$; specifically, $\sigma_m(x)=\bar c_{pm+1}+\bar c_{pm+2}x+\ldots +\bar c_{pm+p-1}x^{p-1}$. 

We also let $\Gamma=A{\d \over \d x} -B$ denote the differential operator as in equation \eqref{E.inh}, so $\Gamma(\bar s(x))=0$. Setting for this proof $r_m(x):=x\sigma_m(x)+x^{p+1}\sigma_{m+1}(x)+\ldots $, we    have  $0=\Gamma(x\sigma_0(x)+\ldots +x^{(m-1)p+1}\sigma_{m-1}(x))+x^{mp}\Gamma(r_m(x))$. The first term on the right is a polynomial of degree at most $mp+4$ \footnote{ Observe in fact that a possible term of degree $pm+5$ cannot appear because of differentiation.}, hence $\Gamma(r_m(x)))$ is a polynomial of degree $\le 4$. This proves that the sequence of coefficients of $r_m(x)$, i.e., the sequence $(\bar c_{mp+n})_{n\in\N}$, satisfies the recurrence \eqref{E.recurrence}, for each integer $m\ge 0$, and thus lies in $W_p$. \footnote{ Note in passing that, in view of Proposition \ref{P.modp}, this implies the congruence $6\bar c_{mp+4}+\bar c_{mp+2}+\bar c_{mp+1}=0$ for each $m\in\N$, mentioned in the introduction. A direct proof of such congruence shall however be given in \S \ref{SS.cartier} below.} 

We now prove that, by projection  on the first four coordinates, the elements $\bar s(x)=r_0(x)$ and $r_1(x)$ give rise to linearly independent elements of $V_p$. Indeed, suppose the contrary. Then, since the terms $\bar C_1,\ldots ,\bar C_4$ determine the terms up to $\bar C_p$ for any solution $(\bar C_n)$ of \eqref{E.recurrence}, we would obtain that $\sigma_0(x),\sigma_1(x)$ are linearly dependent over $\F_p$, hence $\sigma_1(x)=\lambda \sigma_0(x)$ for a $\lambda\in\F_p$.  

This yields $\bar s(x)=(1+\lambda x^p)x\sigma_0(x)+O(x^{2p})$. Applying the operator $\Gamma$ and taking into account that differentiation in $\F_p((x))$ is $\F_p((x^p))$-linear, we obtain $(1+\lambda x^p)\Gamma(x\sigma_0(x))=O(x^{2p})$, whence on multiplying by $(1+\lambda x^p)^{-1}=1-\lambda x^p+\lambda^2x^{2p}-\ldots$, we get $\Gamma(x\sigma_0(x))=O(x^{2p})$. However the left side is a polynomial of degree at most $p+4$, and therefore, since $p>3$, it must vanish. This means that $x\sigma_0(x)$ is a polynomial solution of \eqref{E.diff.eq.}, of degree $\le p$.

Now recall from Example \ref{E.pol} that the polynomial $x(2x+1)Q(x)^{p-1\over 2}\in \F_p[x]$, of degree $2p$, is a nonzero solution of the differential equation (\ref{E.diff.eq.}), considered over $\F_p$. But such a differential equation is of the first order and thus any two solutions over the field $\F_p(x)$ are linearly dependent over the constant field, which in this case is  $\F_p(x^p)=\F_p(x)^p$. Therefore we would have $a(x)^p\sigma_0(x)=b(x)^p(2x+1)Q(x)^{p-1\over 2}$, for suitable polynomials $a,b\in\F_p(x)$, not both zero, and which can be therefore supposed to be coprime. In particular, $a(x)^p$ would divide $(2x+1)Q(x)^{p-1\over 2}$; but $Q(x)$ has no repeated factors over $\F_p$ for $p\neq 2,5,13$, hence $a(x)$ would be constant. But this contradicts that $\deg\sigma_0\le p-1$, $\deg Q=4$. 

In conclusion, the said vectors are linearly independent over $\F_p$ and in particular  yield that $\dim V_p\ge 2$. On the other hand, by Proposition \ref{P.modp}, $V_p$ has dimension at most $3$.

\end{proof}

 In \S \ref{SS.cartier} we shall finally complete the proof of Theorem \ref{T.modp} by proving the last assertion left open, i.e. that indeed $\dim V_p=2$.



\section{A theorem of Chudnovski and Andr\'e and a 
proof 
of Theorem \ref{T.converse} (ii)} \label{S.andre}

We shall now discuss briefly P\`olya's question about a possible extension of his result to the case of algebraic (non rational) functions; that is, we ask whether the integrality of the coefficients in an indefinite integral $f(x)$ of the algebraic function $g\in\Q[[x]]$ implies that $f(x)$ itself is algebraic. As in Remark \ref{R.residues}, Proposition \ref{P.residues} yields a weaker conclusion than being exact (although under a weaker assumption). 

To our knowledge, the first complete proof of P\`olya's expectation was given by D.V. and G.V.  Chudnovski \cite{CC} and then by Andr\'e, and stated in his book \cite{[A]}, p. 149, where a proof can also be found; this is further  detailed in \cite{[A2]}.  However a substantial  part of the method and result appeared already in \cite{CC}.
Let us state in precision this result  in a phrasing slightly different than in \cite{CC} and \cite{[A2]} and working for simplicity over $\Q$ in place of a general number field:

\medskip

\noindent{\bf Theorem of Chudnovski and Andr\'e's theorem} (\cite{CC}, \cite{[A2]}, Prop. 6.2.1.) {\it  Let  $\xi $ be  an algebraic differential of a function field $K$ of  a curve defined over $\Q$. Suppose  that  for all primes $p$ except possibly for a set of  zero density \footnote{ By this we mean, as usual,   that the number of these primes up to $T$ is $o(T/\log T)$.} it is exact when considered modulo $p$. Then $\xi$ is exact as a  differential of $K$.}


\medskip

First of all, let us note that this implies at once the following assertion, on expressing the differential as $g(x)\d x$ for an algebraic function $g$ represented by a power series, and letting  $f(x)$ be an indefinite integral of $g(x)$. 

\medskip

\noindent {\bf Corollary.} {\it If $f(x)$ is a power series with coefficients in a number field, which are $p$-integers for all primes $p$ except possibly for a set of primes of zero density, and if $f'(x)$ represents an algebraic function, then $f(x)$ represents an algebraic function as well.}

\medskip

\underline{\it About the proof of the Chudnovski-Andr\'e's theorem.}   The arguments of the Chudnovski and Andr\'e's proofs  are delicate and combine various tools;  here we only say a few words on them, thinking for instance of the last statement.   This is deduced as a 
corollary of a general   `algebraicity criterion' for power series in several variables over a number field: see \S 5 in \cite{[A2]}, especially Thm. 5.4.3 and Cor. 5.4.5.  The main assumptions  in this Theorem 5.4.3 are of `$G$-functions' type and concern (a)  the $v$-adic growth of the coefficients and (b) the  $v$-adic radii of convergence of the relevant function $f(x)$: the product of these radii  is assumed to be $>1$. A very  important issue is that it suffices that this last inequality  holds possibly after a preliminary {\it uniformization}  of the relevant functions, 
depending on the place $v$.   

Now, to apply this general criterion to our context, one uses  the integrality assumptions for the coefficients to imply   the first half (a) of the assumptions. As to (b),  the fact that the derivative is algebraic, itself implies that the $v$-adic radii are $\ge 1$ except at finitely many places, where they are positive. 
Instead, for infinite places, 
one uses a crucial uniformization by entire functions of several variables, obtained through the exponential map on a generalized Jacobian.  This yields infinite radius of convergence, which makes automatic the needed inequality. \footnote{ This kind of idea had been introduced and used already in \cite{CC} by D. and G. Chudnovski.} 
 It is worth mentioning that in this step one has to go to the case of several variables even if the original functions depend on a single variable.\footnote{It is tempting to note the analogy with the proofs of Weil, Siegel and Faltings, for integral and rational points on algebraic curves, when one has to study the Jacobian of the curve. This principle here appears to us to be more hidden {\it a priori}.}

The technical part in the proof of the said criterion is through  auxiliary constructions which are familiar  in transcendental number theory. One first  constructs a polynomial in the variables and in the relevant function, having algebraic coefficients with  small  height and vanishing to a high order at the origin; this order and the degree of the polynomial are parameters which eventually are taken to be large enough.  It is then shown that the polynomial is zero (whence algebraicity follows) by comparing estimates for the first nonvanishing coefficient at the origin: an upper bound follows from Cauchy's theorem, and exploits the estimates mentioned above; a lower bound follows from the product formula. Under the said assumptions, these bounds are eventually inconsistent, proving that no nonzero coefficient exists. 

We stress that the above statement is merely one of the many corollaries that can be derived from  this  general setting.

\medskip

As to our situation, these results  
are highly relevant. 
Indeed, we may deduce Theorem \ref{T.converse}(ii) from them.   



\begin{proof}[Proof of Theorem \ref{T.converse} (ii)]  Assumptions as in  the theorem, suppose by contradiction that the said sequence $\{C_n\}$   consists  of $p$-integers at all primes $p$ but  a set of zero density. 
Then $S(x)$  would have $p$-integral coefficients for these primes $p$, and the same would hold for $f(x)=S(x)/s(x)$. But, in view of (\ref{E.derivative}), the derivative of $f(x)$ is algebraic. Hence by the Corollary of Andr\'e's theorem $f(x)$ would be also algebraic. But then $S(x)$ would be algebraic as well, against Lemma \ref{L.algebraic}.
\end{proof}




Using the  above version of Andr\'e's theorem and Proposition \ref{P.estimate}, we can obtain the more precise result of  Proposition \ref{C.lowerbound}.

\begin{proof}[Proof of Proposition \ref{C.lowerbound}]  Let $\mathcal P$ be the set of primes such that the differential form $\xi$   appearing on the right hand side of \eqref{E.derivative} is not exact modulo $p$. Under the present assumptions, $S(x)$ is transcendental, hence $\xi$ is not exact as a differential form on our curve $E$ whence, by Andr\'e's Theorem, the set $\mathcal P$ has not zero density. Therefore there are  a number $b>1$ and  infinitely many integers $m$ such that the product of primes in $\mathcal P$ up to $m$ is at least $b^{m}$. By Proposition \ref{P.estimate},  $l$ being  the constant which appears therein, for each prime $p\in\mathcal P$  there is an index $n\le lp$ such that $C_n$ is not $p$-integral. Hence, letting $n=n_m$ be the integral part  of $lm$,  the product of primes appearing in a common denominator for $C_1,\ldots ,C_{n}$ is at least $b^{m}\ge (b^{1\over l})^{n}$. This proves the assertion, with $a:=b^{1\over l}$.

\end{proof}

This argument also allows to prove Corollary \ref{C.converse}:

\begin{proof}[Proof of Corollary \ref{C.converse}] Suppose $6C_4+C_2+C_1\neq 0$. Then, in view of Proposition \ref{P.modp} (i.e. the part of Theorem \ref{T.modp} proved so far), we see that  for large enough primes $p$  the reduction modulo $p$ of $(C_n)$ cannot lie in $W_p$, a contradiction which proves the first part of the corollary.

As to the second part, an  argument quite similar to that just given for Proposition \ref{C.lowerbound}, now with $\mathcal P$ containing all but finitely many primes,  yields the stated  estimate for all large enough integers $n$.

\end{proof}

\section{The Cartier operator}\label{SS.cartier} For the rest of the arguments it shall be very helpful to dispose of  the so-called `Cartier operator'; we shall now recall the definitions and principal properties, following  Appendix 2 of \cite{[L]}; we shall not reproduce the elegant  proofs given therein, which are however rather self-contained and neither long nor  complicated. 

\medskip

For simplicity we do not operate in maximal generality and restrict to what is sufficient for our context. Let $k_0$ be a perfect field of characteristic $p>0$, and let $L/k_0$ be a finitely generated extension such that the field $L^p:\{u^p:u\in L\}$ is such that $[L:L^p]=p$. In the applications below $L$ shall be either a power series field $k_0((x))$ or a function field of dimension $1$ over $k_0$. Indeed,  it is part of the  standard elementary theory (see e.g. \cite{[L3]}) that in this case there is a {\it separating} element $t\in L$ such that $[L:k_0(t)]$ is separable, and the needed condition $[L:L^p]=p$ follows.

Let  $x\in L$ be any separating element; then $x$ is necessarily purely inseparable of degree $p$ over $L^p$, and $L=L^p(x)$. Let $\Omega=\Omega_L$ be the space of $1$-differential forms on $L/k_0$, and let $\xi\in\Omega$; then $\xi$  may be written as $\xi =u\ d x$, with $u\in L$. In turn, we may write uniquely $u=u_0^p+u_1^px+\ldots +u_{p-1}^px^{p-1}$, with $u_0,\ldots ,u_{p-1}\in L$. We set
\begin{equation*}
C(\xi):=u_{p-1}\d x.
\end{equation*}

This operation, which sends the space $\Omega$  to itself,  is clearly $k_0$-linear, once $x$ is fixed, and we call $C$ the {\it Cartier operator} on $K$. Clearly, it satisfies $C(z^p\xi)=zC(\xi)$ for each $z\in L$.

A most important  and nonobvious fact is that $C$ is well-defined independently of $x$, namely that its value is independent of the separating element $x$ used to express $\xi$. This deduction is not very complicated, but it relies on a number of steps and other  definitions, so we omit a proof and refer to \cite{[L]}, Appendix 2, especially \S 3, \S 4. 

The value $C(\xi)$ picks in particular the  residues in positive characteristic, but expresses more than that; indeed, note that for a differential $u(x)\cdot \d x$ to be integrated, where $u(x)\in k_0((x))$, we do not merely need that its residue vanishes, but that all terms $x^{mp-1}$ in have zero coefficient, i.e. we need that $C(u\d x)=0$. 

Note that if $L=k_0(\X)$ is a function field, corresponding to a smooth complete curve $\X/k_0$,  and if $t$ is a local parameter at some point  $Q\in\X(k_0$ (so in particular $L/k_0(x)$ is separable), we may embed $L$ (through $Q$) in $k_0((t))$ and it may be easily seen that  the Cartier operator then coincides with the one defined for the power series field.

It is clear that the Cartier operator commutes with automorphisms of $L/k_0$. (Indeed, any such automorphism leaves $L^p$ fixed.)

Here are other properties (proved in \cite{[L]}) that we shall use below:

\medskip

(C1): We have $C(\xi)=0$ if and only if $\xi$ is exact, i.e. of the shape $\xi=\d \phi$ for a $\phi\in L$.

(C2): We have $C(\xi)=\xi$ if and only if $\xi$ is {\it logarithmically exact}, i.e. of the shape $\xi=\d\phi/\phi$ for a $\phi\in L^*$.

\medskip

\begin{example}\label{E.modp} Let us illustrate a couple of applications of these properties.

(i) It is a simple calculation to check that $2{x^3+x^2+6\over x^4}t\omega=\d({y\over x^3}+3{y\over x^2})$, where as above $y^2=Q(x)$ defines our elliptic curve and $\omega=\d x/y$. Then $C(2{x^3+x^2+6\over x^4}t\omega)=0$, which yields the congruences $6c_{kp+4}+c_{kp+2}+c_{kp+1}$ mentioned in the introduction. Many similar ones may be derived by an analogous observation. \footnote{ Of course this amounts to the fact that the coefficients of $x^{np-1}$ in a derivative of a function carry a factor $np$.} 

(ii) We offer another proof, very short,  of the case $r=0$ of Proposition \ref{P.congr}. 
 Let now $\xi=x\d f/f$, where $f\in k_0[[x]]$ has $f(0)=1$. 
 Then $C(x^{p-1}\xi)=xC(\d f/f)$. Suppose $f$ is a separating element; then, on viewing $f$ as a new variable (that is, $f$ in place of $x$), we see that $C(\d f/f)=\d f/f$. This shows that $C(\sum c_nx^{n+p-1}\d x)=\xi$, whence $c_{np}=c_n$, as required. (If $f$ is not separating, then $\xi=0$.)
 
 \medskip

\end{example}

We now prove a simple result which bounds the order of poles for $C(\xi)$. Let $L=k_0(\X)$, $Q\in \X(k_0)$ and let $v=v_Q$ denote the order function at $Q$. We have

\begin{prop}\label{P.v_Q} For a differential form $\xi\in\Omega_L$ we have 
\begin{equation*}
v_Q(C(\xi))\ge \lceil {{v_Q(\xi)+1\over p}-1}\rceil .
\end{equation*}
In particular, if $\xi$ is regular at $Q$, the same holds for $C(\xi)$, and if $v_Q(\xi)\ge -p$ then $C(\xi)$ has at most a simple pole at $Q$.
\end{prop}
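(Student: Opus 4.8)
The plan is to work in the local field $k_0((t))$ where $t=t_Q$ is a local parameter at $Q$, since by the place-independence of the Cartier operator (recalled above) the value $C(\xi)$ may be computed there, and $v_Q$ is just the $t$-adic valuation. Write $\xi=u\,\d t$ with $u\in k_0((t))$, so that $v_Q(\xi)=v_Q(u)$ (here one uses that $\d t$ is a generator of the module of differentials at $Q$, i.e. $v_Q(\d t)=0$). Expand $u$ in powers of $t$: $u=\sum_{i\ge v_Q(u)}a_it^i$. First I would recall the explicit action of $C$ on monomials: grouping $u=u_0^p+u_1^p t+\dots+u_{p-1}^p t^{p-1}$ amounts to collecting residue classes mod $p$ of the exponents, and $C(u\,\d t)=u_{p-1}\,\d t$. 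Concretely, the coefficient of $t^j$ in $u_{p-1}$ is the $p$-th root of the coefficient of $t^{pj+p-1}$ in $u$ (this makes sense since $k_0$ is perfect), so
\begin{equation*}
C(u\,\d t)=\Bigl(\sum_j a_{pj+p-1}^{1/p}\,t^j\Bigr)\d t.
\end{equation*}

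Next I would read off the valuation. The smallest $j$ that can contribute is the smallest $j$ with $pj+p-1\ge v_Q(u)$, i.e. $j\ge (v_Q(u)-p+1)/p=(v_Q(u)+1)/p-1$. Since $j$ is an integer, the minimal admissible $j$ is $\lceil (v_Q(u)+1)/p-1\rceil$, whence
\begin{equation*}
v_Q(C(\xi))\ge \Bigl\lceil \tfrac{v_Q(\xi)+1}{p}-1\Bigr\rceil,
\end{equation*}
which is exactly the claimed inequality (noting $v_Q(u)=v_Q(\xi)$). The two special cases are then immediate arithmetic: if $v_Q(\xi)\ge 0$ then $(v_Q(\xi)+1)/p-1>-1$, so the ceiling is $\ge 0$ and $C(\xi)$ is regular at $Q$; if $v_Q(\xi)\ge -p$ then $(v_Q(\xi)+1)/p-1\ge (-p+1)/p-1=-2+1/p>-2$, so the ceiling is $\ge -1$ and $C(\xi)$ has at most a simple pole.

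There is essentially no serious obstacle here; the only point requiring a little care is the bookkeeping with $\d t$ versus an arbitrary expression $\xi=v\,\d x$ for a separating element $x$, and the justification that it suffices to compute in $k_0((t))$ — but this is precisely the content of the compatibility of the two definitions of $C$ stated above, together with $v_Q(\d t)=0$. If one prefers to avoid even mentioning $k_0((t))$, the same computation goes through intrinsically using any separating element $x$ with $v_Q(x)=1$; the displayed formula for $C$ on the $t$-expansion is unchanged, and the index arithmetic is identical. I would therefore present the short local computation and then the two corollaries as one-line consequences.
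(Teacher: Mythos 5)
Your proposal is correct and is essentially the paper's own argument: the paper writes $\xi=(u_0^p+u_1^pt+\dots+u_{p-1}^pt^{p-1})\,\d t$ in a local parameter $t$ at $Q$, observes that the valuations $v_Q(u_j^pt^j)=pv_Q(u_j)+j$ are pairwise incongruent mod $p$ and hence each is $\ge v_Q(\xi)$, and reads off $v_Q(u_{p-1})\ge \frac{v_Q(\xi)+1}{p}-1$ before taking the ceiling — exactly your index computation, phrased via valuations of the $p$-power components rather than via the power-series coefficients $a_{pj+p-1}$. The only point to keep explicit, which you do, is that $C$ may be computed in $k_0((t))$ with respect to $t$, which the paper has already justified.
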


\begin{proof} Let $t\in k_0(\X)$ be a local parameter at $Q$, and write $\xi=(u_0^p+u_1^pt+\ldots +u_{p-1}^pt^{p-1})\d t$, where $u_i\in L$. Note that the values $v_Q(u_j^pt^j)=pv_Q(u_j)+(t-j)$ are pairwise incongruent modulo $p$, hence pairwise distinct. Hence $v_Q(u_{p-1}^pt^{p-1})\ge v_Q(\xi)$. Therefore  $v_Q(u_{p-1})\ge {v_Q(\xi)+1\over p}-1$. But the right side equals $v_Q(C(\xi))$ and is an integer, whence the assertion.

\end{proof}

For instance, this yields the assertion made in the introduction that a congruence $c_{kp+i}\equiv c_{i}\pmod p$ cannot hold for all $k$. (Here $(c_n)$ is our usual sequence.) Indeed, it is a very easy matter to check that $C(2x^{-i}t\omega)=\sum c_{pn+i}x^n\d x$; if the said congruence would hold we get $C(2x^{-i}t\omega)=c_i\d x/(1-x)$. However the right side has a pole at points where $x=1$, whereas the left side is regular there, contradicting what has been just proved.

Here are other applications:

\begin{cor}\label{C.deg} For a differential form $\xi\in\Omega_L$, we have \begin{equation*}
\sum_{v_Q(C(\xi))<0}v_Q(C(\xi))\ge \sum_{v_Q(\xi)<0}v_Q(\xi).
\end{equation*}

\end{cor}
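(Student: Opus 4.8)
The plan is to read this off directly from Proposition \ref{P.v_Q}, which supplies all the arithmetic; the corollary is then essentially bookkeeping on polar divisors. The underlying statement one wants is that the Cartier operator does not increase the degree of the polar part of a differential form.

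First I would record the containment of polar loci. By the ``in particular'' clause of Proposition \ref{P.v_Q}, if $\xi$ is regular at $Q$ then so is $C(\xi)$; equivalently, $v_Q(C(\xi))<0$ forces $v_Q(\xi)<0$. Hence the index set of the left-hand sum is contained in that of the right-hand sum. (If $C(\xi)=0$ the left-hand side is an empty sum, i.e.\ $0$, while the right-hand side is $\le 0$, so the inequality is trivial; thus we may assume $C(\xi)\neq 0$, in which case both sums are finite, being taken over the finitely many poles of $\xi$ and of $C(\xi)$.)

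Next, for a place $Q$ with $v_Q(C(\xi))<0$ — so that also $v_Q(\xi)<0$, i.e.\ the integer $m:=v_Q(\xi)$ satisfies $m\le -1$ — I would combine the bound of Proposition \ref{P.v_Q} with the elementary inequality $\frac{m+1}{p}-1\ge m$, valid for every integer $m\le -1$ and every prime $p$: it rearranges to $-(p-1)\ge (p-1)m$, i.e.\ to $m\le -1$, since $p-1>0$. This yields, at every such $Q$,
\begin{equation*}
v_Q(C(\xi))\;\ge\;\left\lceil \frac{v_Q(\xi)+1}{p}-1\right\rceil\;\ge\;\frac{v_Q(\xi)+1}{p}-1\;\ge\; v_Q(\xi).
\end{equation*}
Finally I would assemble the two observations:
\begin{equation*}
\sum_{v_Q(C(\xi))<0} v_Q(C(\xi)) \;\ge\; \sum_{v_Q(C(\xi))<0} v_Q(\xi) \;\ge\; \sum_{v_Q(\xi)<0} v_Q(\xi),
\end{equation*}
the first inequality being the termwise bound just obtained, and the second holding because passing to the larger index set $\{v_Q(\xi)<0\}$ only adjoins terms $v_Q(\xi)<0$, which can only decrease the sum. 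This is exactly the claimed inequality.

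I do not expect a genuine obstacle here: the substantive content is entirely Proposition \ref{P.v_Q}, and the only points needing a moment's care are the direction of the polar-locus inclusion and the trivial inequality $\frac{m+1}{p}-1\ge m$ for $m\le -1$; everything else is formal. (One may also phrase the conclusion as $\deg(C(\xi))_\infty\le\deg(\xi)_\infty$ for polar divisors, which is perhaps the most transparent way to see what is being asserted.)
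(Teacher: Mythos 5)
Your proof is correct and follows essentially the same route as the paper: both arguments rest entirely on Proposition \ref{P.v_Q} via the termwise bound $v_Q(C(\xi))\ge \frac{v_Q(\xi)+1}{p}-1\ge v_Q(\xi)$ at poles (using $v_Q(\xi)\le -1$), together with the inclusion of polar loci. If anything, your bookkeeping over the two index sets is a bit more explicit than the paper's one-line ``summing over $Q\in S$,'' which is a welcome clarification but not a different method.
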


\begin{proof} Let $S$  be the set of poles of $\xi$. For $Q\in S$, by the preceding proposition we have $v_Q(C(\xi))\ge {v_Q(\xi)\over p}-{p-1\over p}\ge v_Q(\xi)$, because $v_Q(\xi)+1\le 0$. Summing over $Q\in S$ we obtain the assertion.
\end{proof}

We can use this corollary to give another proof of Proposition \ref{P.estimate}, actually in more precise and general form:

\begin{prop}\label{P.estimate2} Suppose that a form $\xi=w\d x$, where $w\in k_0[[x]]\cap L$, is not exact in $L$. Then there is a nonvanishing coefficient of $x^{mp-1}$ in the expansion for $w$ such that $m\le  -\sum_{v_Q(\xi)<0}v_Q(\xi)+2g-1+\deg x+N_x$, where $g$ is the genus of $L$ and $N_x$ is the number of distinct poles of $x$. 
\end{prop}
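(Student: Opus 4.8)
The plan is to run everything through the Cartier operator $C$ of \S\ref{SS.cartier}. By property (C1), the hypothesis that $\xi$ is not exact in $L$ is equivalent to $C(\xi)\neq 0$, and the whole point is to read off a nonvanishing coefficient $w_{mp-1}$ from the local expansion of $C(\xi)$ at the place $Q_0$ at which $x$ is a local parameter. Writing $w=\sum_{n\ge 0}w_nx^n$ with $w_n\in k_0$ and using that $k_0$ is perfect, the decomposition $w=u_0^p+u_1^px+\dots+u_{p-1}^px^{p-1}$ used to define $C$ has $u_j$ with local expansion $\sum_{m\ge 0}w_{mp+j}^{1/p}x^m$ at $Q_0$; hence $C(\xi)=u_{p-1}\,\d x$ has local expansion $\bigl(\sum_{m\ge 0}w_{(m+1)p-1}^{1/p}x^m\bigr)\d x$ there (recall $v_{Q_0}(\d x)=0$). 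So if $C(\xi)\neq 0$ the least $m\ge 1$ with $w_{mp-1}\neq 0$ is exactly $v_{Q_0}(C(\xi))+1$, and it remains only to bound $v_{Q_0}(C(\xi))$.

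Next I would record that $\xi=w\,\d x$ is regular at $Q_0$ (as $w\in k_0[[x]]$ and $\d x$ generates $\Omega$ locally there), so by Proposition \ref{P.v_Q} the same holds for $C(\xi)$; in particular $Q_0$ is not in the polar divisor of $C(\xi)$. Write $C(\xi)=h\,\d x$ with $h\in L^\times$, so $v_{Q_0}(C(\xi))=v_{Q_0}(h)\ge 0$. Since $\deg\div(h)=0$, the order of the zero of $h$ at $Q_0$ is at most the degree of the polar divisor of $h$; and from $\div(h)=\div(C(\xi))-\div(\d x)$ this polar divisor is bounded, coefficientwise, by the poles of $C(\xi)$ together with the zeros of $\d x$, so that
\begin{equation*}
v_{Q_0}(C(\xi))\ \le\ \deg\bigl(\text{poles of }C(\xi)\bigr)+\deg\bigl(\text{zeros of }\d x\bigr).
\end{equation*}
Corollary \ref{C.deg} bounds the first term: $\deg(\text{poles of }C(\xi))=-\sum_{v_Q(C(\xi))<0}v_Q(C(\xi))\le-\sum_{v_Q(\xi)<0}v_Q(\xi)$. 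For the second, $\deg\div(\d x)=2g-2$, while the polar part of $\div(\d x)$ is supported on the poles of $x$, with $v_Q(\d x)\ge-(e_Q+1)$ at a pole of order $e_Q$; hence $\deg(\text{poles of }\d x)\le\sum_{\text{poles }Q\text{ of }x}(e_Q+1)=\deg x+N_x$ and $\deg(\text{zeros of }\d x)\le 2g-2+\deg x+N_x$. Putting these together gives $v_{Q_0}(C(\xi))\le-\sum_{v_Q(\xi)<0}v_Q(\xi)+2g-2+\deg x+N_x$, whence $m\le-\sum_{v_Q(\xi)<0}v_Q(\xi)+2g-1+\deg x+N_x$, as required.

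There is no deep obstacle here: the two substantive ingredients, namely the identification ``not exact $\iff C(\xi)\neq 0$'' and the control of how much of the canonical degree the remaining places of $C(\xi)$ can absorb, are exactly (C1) and Corollary \ref{C.deg}. The only genuine nuisance is the passage between $C(\xi)$ as a differential and $h=C(\xi)/\d x$ as a function, which is what forces the extra $\deg x+N_x$ (these measure the zeros of $\d x$ at the ramification of $x$); arguing instead directly with $\div(C(\xi))$ --- using that $v_{Q_0}(C(\xi))$ is one nonnegative summand in $\sum_Q v_Q(C(\xi))=2g-2$ and bounding the negative summands via Corollary \ref{C.deg} --- would even yield the slightly sharper $m\le-\sum_{v_Q(\xi)<0}v_Q(\xi)+2g-1$.
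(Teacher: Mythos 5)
Your proof is correct and follows essentially the same route as the paper's: you identify the first nonvanishing coefficient $w_{mp-1}$ with $v_{Q_0}(C(\xi))+1$ via the decomposition defining the Cartier operator, then bound that zero order by the polar degree of $C(\xi)/\d x$, controlled through Corollary \ref{C.deg} and the Hurwitz count $2g-2+\deg x+N_x$ for the zeros of $\d x$. Your closing observation that working directly with $\deg\div(C(\xi))=2g-2$ gives the sharper bound $m\le -\sum_{v_Q(\xi)<0}v_Q(\xi)+2g-1$ is also correct.
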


\begin{proof} We write $w=w_0^p+w_1^px+\ldots +w_{p-1}^px^{p-1}$ with $w_i\in L$, so $C(w\d x)=w_{p-1}\d x$. If the coefficient of $x^{mp-1}$ in the expansion for  $w$ vanish for $m\le l$, then the coefficients of $x^m$ in $w_{p-1}$ vanish for $m<l$, so $w$ has a zero of order $\ge l$ at the place corresponding to the expansion.  On the other hand,   in view of (C1), $w_{p-1}\neq 0$, because the form is not exact, whence $\deg w_{p-1}\ge l$.

 Also, by the last corollary, the number of poles (with multiplicity) of $w_{p-1}$ does not exceed the number of poles of $w\d x$ plus the number of zeros of $\d x$. Now, the Hurwitz formula yields $2g-2=\sum_Qv_Q(\d x)=\sum_{zeros}v_Q(\d x)+\sum_{poles}v_Q(\d x)$. The last term is $\ge -\deg x-N_x$, hence $\sum_{zeros}v_Q(\d x)\le 2g-2+\deg x+N_x$. Combining with the previous facts we get $l\le -\sum_{v_Q(\xi)<0}v_Q(\xi)+2g-2+\deg x+N_x$, as required.

\end{proof}

\begin{rem}\label{R.estimate2}
(i) In particular, if  the form ${\tilde R(x)\over 2(1+2x)^2}\omega$ on the right of (\ref{E.derivative}) is not exact in $\F_p[[x]]$, an easy computation using this last result shows that there is an $n\le 8\cdot p$ such that $C_n$ is not $p$-integral.

(ii) This result makes it effectively possible to decide whether a   differential form in $L$ is exact, provided only we can effectively calculate any prescribed number of coefficients in its expansion.
\end{rem}

\medskip





\subsection{The Cartier operator on elliptic curves} \label{SS.cartierE} 
Let now $E$ be an elliptic curve over a finite field, chosen for simplicity as $\F_p$. We suppose for this subsection that it is given by a Weierstrass equation $y^2=f(x)$ where $f$ is a monic cubic polynomial in $\F_p[x]$. \footnote{ Hence, the variables $x,y$ here, as well as $\omega,\eta$ below,  do not have the same meaning than in most of the rest of the paper, where we consider a special elliptic curve, introduced in \S\ref{SS.elliptic}.} 

We let $\omega=\d x/y$ be a differential form of the first kind (i.e. regular) and $\eta =x\omega$ a form of the second kind, i.e. with no residues; it has a double pole at the origin. As we have already recalled, in zero characteristic these forms are independent modulo exact forms  and generate the space of differential forms of the second kind modulo exact ones (see \cite{[L2]}).  None of these assertions remains true  in positive characteristic. For instance, the forms $x^{mp-1}\d x$ are linearly independent over $\F_p$ modulo exact.  But their  only  pole is at the origin of $E$; hence, there is a vector space of  codimension $1$ in the infinite-dimensional space that they span, which is made up of forms with no residues. As to the other assertion, we shall see in the next proposition that $\omega,\eta$ are in fact always linearly dependent over $\F_p$ modulo exact.   Note that exact forms are precisely those in the kernel of the Cartier operator, as recalled in (C1) above. Hence  it is relevant to inspect how the Cartier operator acts on $\omega,\eta$.   

Before stating  the said proposition, recall that we have a Frobenius map $\pi$ on $E$ and its transpose $\hat \pi$ (see \cite{[L]} or \cite{[Sil]}). The composition $\pi\cdot\hat\pi$ equals the multiplication-by-$p$ map, whereas the sum $\pi+\hat\pi$ is multiplication by an integer $A$, called the "trace of Frobenius" and we have $\# E(\F_p)=p+1-A$. 

\begin{prop} \label{P.alphabeta}There are $\alpha, \beta\in \F_p$ such that $C(\omega)=\alpha\omega$, $C(\eta)=\beta\omega$. Also, $\alpha$ is the   reduction modulo $p$ of the trace of Frobenius, so that  $\# E(\F_p)\equiv 1-\alpha\pmod p$. In particular, $\omega,\eta$ are linearly dependent modulo exact forms and satisfy the relation $\beta\omega\equiv\alpha\eta\pmod{{\rm exact \  forms}}$.

\end{prop}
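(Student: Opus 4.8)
The plan is to use only three facts about the Cartier operator $C$: it is $\F_p$-linear (for a fixed separating element), it controls pole orders as in Proposition \ref{P.v_Q}, and its kernel is exactly the exact forms, by property (C1). First I would argue that since $\omega=\d x/y$ is regular everywhere on $E$, Proposition \ref{P.v_Q} gives that $C(\omega)$ is again regular everywhere; as the space of regular differentials on an elliptic curve is one-dimensional and spanned by $\omega$, this forces $C(\omega)=\alpha\omega$ for a unique $\alpha\in\F_p$. For $\eta=x\omega$ the only pole is the double pole at the origin $O$, where $v_O(\eta)=-2\ge -p$; so Proposition \ref{P.v_Q} gives that $C(\eta)$ is regular away from $O$ and has at most a simple pole at $O$. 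A rational differential on a complete curve cannot have a single simple pole as its only pole (the sum of residues vanishes), so $C(\eta)$ is in fact regular everywhere, hence $C(\eta)=\beta\omega$ for some $\beta\in\F_p$. The last assertion then drops out: by $\F_p$-linearity $C(\beta\omega-\alpha\eta)=\beta\alpha\omega-\alpha\beta\omega=0$, so by (C1) the form $\beta\omega-\alpha\eta$ is exact, i.e. $\beta\omega\equiv\alpha\eta$ modulo exact forms.

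The substantive step is to identify $\alpha$ with the reduction of the trace of Frobenius. I would compute $C(\omega)$ explicitly. Since $p$ is odd (forced by the Weierstrass form $y^2=f(x)$), we have $y^{p-1}=(y^2)^{(p-1)/2}=f(x)^{(p-1)/2}$, a polynomial in $x$, hence $\omega=\d x/y=f(x)^{(p-1)/2}\,y^{-p}\,\d x$ with $y^{-p}=(y^{-1})^p\in L^p$. Writing $f(x)^{(p-1)/2}=\sum_j a_jx^j$ with $a_j\in\F_p$ and grouping monomials by $j$ modulo $p$ via $x^{pq+r}=(x^q)^px^r$ and $a_j=a_j^p$, one reads off that in the canonical expansion $y^{-1}=\sum_{r=0}^{p-1}u_r^px^r$ the coefficient $u_{p-1}$ equals $\sum_q a_{pq+p-1}\,x^q/y$. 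Because $\deg f(x)^{(p-1)/2}=3(p-1)/2<2p-1$, only $q=0$ survives, so $u_{p-1}=a_{p-1}/y$ and $C(\omega)=a_{p-1}\,\d x/y=a_{p-1}\omega$. Thus $\alpha=a_{p-1}$ is the coefficient of $x^{p-1}$ in $f(x)^{(p-1)/2}$, the Hasse invariant of $E$.

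Finally I would invoke the classical congruence between the Hasse invariant and the trace of Frobenius via a point count: $\#E(\F_p)=p+1+\sum_{a\in\F_p}\chi(f(a))$, where $\chi$ is the quadratic character; since $\chi(c)\equiv c^{(p-1)/2}\pmod p$ for all $c\in\F_p$, we get $\sum_a\chi(f(a))\equiv\sum_a f(a)^{(p-1)/2}\pmod p$. Using $\sum_{a\in\F_p}a^k\equiv -1\pmod p$ when $(p-1)\mid k$ with $k>0$, and $\equiv 0$ otherwise, together with $\deg f(x)^{(p-1)/2}=3(p-1)/2<2(p-1)$, the only monomial that contributes is $a_{p-1}x^{p-1}$, giving $\sum_a\chi(f(a))\equiv -a_{p-1}\pmod p$. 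Hence $\#E(\F_p)\equiv 1-a_{p-1}=1-\alpha\pmod p$, and since $\#E(\F_p)=p+1-A$ this says $A\equiv\alpha\pmod p$, as claimed. I expect the only real difficulty to be the bookkeeping in these two computations — the degree estimate isolating the single surviving monomial $x^{p-1}$, and the sign in passing from the point count to the trace of Frobenius — rather than anything conceptual, since all the structural input is furnished by Proposition \ref{P.v_Q} and property (C1).
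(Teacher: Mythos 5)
Your argument is correct, and for the parts the paper actually writes out it coincides with the paper's proof: you handle $\eta$ exactly as the paper does (Proposition \ref{P.v_Q} gives at most a simple pole at the origin, the residue theorem kills it, so $C(\eta)=\beta\omega$), and you obtain the final assertion the same way, from $C(\beta\omega-\alpha\eta)=0$ and (C1). The one place you genuinely diverge is the identification of $\alpha$ with the trace of Frobenius: the paper simply cites Lang's Appendix 2 for the statements about $\omega$ and $\alpha$, whereas you give the classical self-contained computation — writing $\omega=f(x)^{(p-1)/2}y^{-p}\,\d x$, using the degree bound $3(p-1)/2<2p-1$ to isolate the single surviving monomial and get $\alpha=a_{p-1}$ (the Hasse invariant), and then matching this against the point count $\#E(\F_p)=p+1+\sum_a\chi(f(a))\equiv 1-a_{p-1}\pmod p$ via Euler's criterion. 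Both computations are correct, including the degree estimates and signs. What your route buys is that the proposition becomes independent of the external reference; what it costs is nothing of substance. Two cosmetic remarks: the paper justifies $\beta\in\F_p$ by observing that $\eta$ and $C$ are defined over $\F_p$ (you assert it without comment), and for the claim that $\omega,\eta$ are \emph{linearly dependent} modulo exact forms you should add, as the paper does, the one-line observation that if $\alpha=\beta=0$ then both forms are themselves exact, since in that degenerate case the relation $\beta\omega\equiv\alpha\eta$ is vacuous and does not by itself exhibit a nontrivial dependence.
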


\begin{proof} The  statements about $\omega,\alpha$ are   proved in \cite{[L]}, Appendix 2.  As to $\eta$, we observe that in view of Proposition \ref{P.v_Q}, $C(\eta)$ is regular outside the origin and has at most a simple pole at the origin. However the sum of the residues of a differential form is zero (see \cite{[S3]}, Ch. II). 
Hence $C(\eta)$ is regular everywhere and must therefore be a multiple $\beta\omega$ of $\omega$. \footnote{ Having an explicit equation, this may be also deduced directly from  $yC(\eta)=C(y^p\eta)=C(xf(x)^{p-1\over 2}\d x)=constant\cdot \d x$.} Since both are defined over $\F_p$, we have $\beta\in \F_p$, which concludes  the proof, except for the last assertion. For this, note that $C(\beta\omega-\alpha\eta)=0$, so $\beta\omega-\alpha\eta$ is exact by (C1). If $\alpha,\beta$ do not both vanish, this implies the said dependence; if they are both zero, then $C(\omega)=C(\eta)=0$ so both forms are in fact exact.

\end{proof}

We shall now see that indeed $\alpha,\beta$ cannot both vanish. 

Note that, while $\alpha$ is uniquely determined if we prescribe that $\omega$ is defined over $\F_p$ (but it may change if we multiply $\omega$ by a constant not in $\F_p$), the quantity $\beta$ depends on the Weierstrass equation;  however changing this equation merely changes $\eta$ with a linear combination of $\eta$ and $\omega$ and hence the nonvanishing is not affected by such a change.

\begin{prop}\label{P.alphabeta2} The above defined constants $\alpha,\beta$  cannot simultaneously vanish.

\end{prop}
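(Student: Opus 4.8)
The plan is to argue by contradiction: assuming $\alpha=\beta=0$ we shall force the monic cubic $f$ defining $E\colon y^2=f(x)$ to fail to be squarefree.

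Begin with a normalisation. If $\alpha\neq0$ there is nothing to prove, so assume $C(\omega)=\alpha\omega=0$. The coordinate change $x\mapsto x-t$ with $t\in\F_p$ leaves $\omega=\d x/y$ fixed and sends $\eta=x\omega$ to $\eta-t\omega$; since $C(t\,\cdot)=t\,C(\cdot)$ for $t\in\F_p$, this replaces $\beta$ by $\beta-t\alpha=\beta$. As $f$ has at most three roots and $p>3$, we may thus assume $f(0)\neq0$. (For $p=3$ the assertion is immediate: there $g:=f$, while $\alpha,\beta$ are the coefficients of $x^2,x$ in $f$, so $\alpha=\beta=0$ makes $f=x^3+c$, a perfect cube.) Set $g:=f^{(p-1)/2}\in\F_p[x]$, monic of degree $N:=\tfrac32(p-1)$, and write $g_j:=[x^j]g$. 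From $g^2f=f^p$ one has $2fg'+f'g=0$, since $(f^p)'=0$; writing $f=x^3+ax^2+bx+c$ and comparing coefficients of $x^k$ gives, for every $k$, the recurrence
\[
(2k-1)\,g_{k-2}+2ak\,g_{k-1}+(2k+1)b\,g_k+2c(k+1)\,g_{k+1}=0 ,
\]
with $g_j=0$ for $j<0$ or $j>N$. Finally, expanding $\omega=g(x)y^{-p}\,\d x$ and $\eta=xg(x)y^{-p}\,\d x$ in the $(\F_p(E))^p$-basis $1,x,\dots,x^{p-1}$ of $\Omega$ (as in the footnote to Proposition \ref{P.alphabeta}) shows $C(\omega)=g_{p-1}\omega$ and $C(\eta)=g_{p-2}\omega$, so our hypothesis reads simply $g_{p-1}=g_{p-2}=0$.

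Now propagate. At $k=p-1$ the recurrence gives $-3g_{p-3}\equiv0\pmod p$, hence $g_{p-3}=0$. Applying it at $k=p-2,p-3,\dots$ and each time solving for $g_{k-2}$ — its coefficient $2k-1$ being nonzero mod $p$ so long as $k\neq\tfrac{p+1}2$, the other three entries $g_{k-1},g_k,g_{k+1}$ having already been shown to vanish — one obtains, using only indices $\tfrac{p+3}2\le k\le p-1$ (so the degenerate value $k=\tfrac{p+1}2$ is never met), that $g_j=0$ for every $j$ with $\tfrac{p-1}2\le j\le p-1$: a block of $\tfrac{p+1}2$ consecutive vanishing coefficients in the middle of $g$. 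Consequently $g=u(x)+x^pv(x)$ with $\deg u\le\tfrac{p-3}2$ and $v$ monic of degree $\tfrac{p-3}2$. Substituting into $g^2f=f^p=f(x^p)$ and observing that the three summands of $f(u+x^pv)^2$ lie in the disjoint degree ranges $[0,p]$, $[p,2p]$, $[2p,3p]$, comparison of coefficients in degrees $1\le d\le p-1$ — where only $fu^2$ contributes, while $f(x^p)$ contributes nothing — together with the constant terms yields $fu^2=\lambda x^p+f(0)$ for a constant $\lambda$. But the right-hand side has vanishing derivative, so $fu^2$ is a polynomial in $x^p$; being of degree $\le p$ it equals $\lambda x^p+f(0)$, which over $\overline{\F}_p$ is $\lambda(x-\gamma)^p$ with $\gamma^p=-f(0)/\lambda$ if $\lambda\neq0$, and the nonzero constant $f(0)$ if $\lambda=0$. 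In the first case $f\mid(x-\gamma)^p$, so the monic cubic $f$ is $(x-\gamma)^3$; in the second, $fu^2$ would be a nonzero constant, while $\deg(fu^2)\ge3$ since $u(0)=g(0)=f(0)^{(p-1)/2}\neq0$. Either way $f$ is not squarefree, a contradiction.

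The only points calling for care are checking that the recurrence does not degenerate before the block of zeros is complete, and the degree bookkeeping for $g^2f=f(x^p)$; the essential step is noticing that this block forces the Pell-type identity $fu^2=\lambda x^p+f(0)$. A less elementary alternative identifies $C$ on the $2$-dimensional space $H^1_{\mathrm{dR}}(E)$ with the Verschiebung, which always has rank $\ge1$ there; since $C(\omega)=\alpha\omega$ and $C(\eta)=\beta\omega$, rank $\ge1$ is exactly the non-simultaneous vanishing of $\alpha,\beta$ (and this viewpoint also recovers the congruence $\#E(\F_p)\equiv1-\alpha\pmod p$).
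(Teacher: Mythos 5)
Your argument is correct, and it takes a genuinely different route from the one in the paper. The paper passes to a Legendre equation $y^2=x(x-1)(x-\lambda_0)$, identifies $\alpha$ and $\beta$ with the values $H_m(\lambda_0)$ and $H_{m-1}(\lambda_0)$ of the coefficient polynomials of $(x-1)^m(x-\lambda)^m$, and shows via the hypergeometric differential equation satisfied by $F(-m,-m-1,1,z)$ (Igusa's no-multiple-zero argument) that simultaneous vanishing would produce a forbidden double root. You instead stay with the given monic cubic, read $\alpha=g_{p-1}$ and $\beta=g_{p-2}$ off $g=f^{(p-1)/2}$ (legitimate because $\deg g$ and $\deg xg$ are both $<2p-1$), and use the linear recurrence coming from $2fg'+f'g=0$ to propagate the two given vanishing coefficients into the full block $g_j=0$ for $\frac{p-1}{2}\le j\le p-1$; the splitting $g=u+x^pv$ then forces the Pell-type identity $fu^2=\lambda x^p+f(0)$, hence $f=(x-\gamma)^3$ when $\lambda\neq0$ and an outright degree contradiction when $\lambda=0$. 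I checked the recurrence, the restriction to $\frac{p+3}{2}\le k\le p-1$ that avoids the degenerate coefficient $2k-1\equiv 0$, the degree bookkeeping in $g^2f=f(x^p)$, the normalization $f(0)\neq0$ (sound, since $\alpha$ is intrinsic and $\beta\mapsto\beta-t\alpha=\beta$ once $\alpha=0$), and the separate case $p=3$: all are in order. Your route is more elementary --- no Legendre normalization, no extension of the ground field, no hypergeometric functions --- and still gives explicit formulae for $\alpha,\beta$ as coefficients of $f^{(p-1)/2}$; the paper's route buys the connection with the classical theory of the Hasse invariant and the Deuring polynomial. Two cosmetic points: the three degree ranges $[0,p]$, $[p,2p]$, $[2p,3p]$ actually share endpoints, which is harmless since you only compare coefficients in degrees $1,\dots,p-1$; and in the case $\lambda=0$ the contradiction is a degree count rather than non-squarefreeness, so your closing sentence is slightly imprecise though the logic is intact. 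The final aside is also a fair summary of the conceptual explanation: on the space of differentials of the second kind modulo exact ones (that is, $H^1_{\mathrm{dR}}(E)$) the Cartier operator realizes the Verschiebung, whose image is the one-dimensional kernel of Frobenius, which is exactly the statement that $(\alpha,\beta)\neq(0,0)$.
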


\begin{proof}  We first consider certain identities. Let $\lambda$ be a variable.
 We put $m:=(p-1)/2$ and we write
\begin{equation*}
(x-1)^m(x-\lambda)^m=\sum_{i=0}^{p-1}H_i(\lambda)x^i,
\end{equation*}
for suitable polynomials $H_i$, defined to be zero for $i<0$ and $i\ge p$. This yields
\begin{equation*}
(x-1)^m(x-\lambda)^{m+1}=\sum_{i=0}^{p}(H_{i-1}(\lambda)-\lambda H_i(\lambda))x^i.
\end{equation*}
Differentiating this equation with respect to $\lambda$ (which we indicate with a dash) we get
\begin{equation*}
-(m+1)(x-1)^m(x-\lambda)^{m}=\sum_{i=0}^{p}(H_{i-1}(\lambda)-\lambda H_i(\lambda))'x^i.
\end{equation*}
Putting $K_i(\lambda):=H_{i-1}(\lambda)-\lambda H_i(\lambda)$, by comparison with the previous equation we obtain
\begin{equation*}
K_i(\lambda)'=(H_{i-1}(\lambda)-\lambda H_i(\lambda))'=-(m+1)H_i(\lambda),\qquad i=0,1,\ldots .
\end{equation*}

On expanding the binomials $(x-1)^m$ and $(x-\lambda)^{m+1}$ we can calculate explicitly
\begin{equation*}
(-1)^{m+1}K_m(\lambda)=\lambda^{m+1}+{m\choose 1}{m+1\choose 1}\lambda^m+{m\choose 2}{m+1\choose 2}\lambda^{m-1}+\ldots + {m\choose m}{m+1\choose m}\lambda.
\end{equation*}
Hence, setting $z:=\lambda^{-1}$, we also have
\begin{equation*}
(-1)^{m+1}{K_m(\lambda)\over \lambda^{m+1}}=1+{m\choose 1}{m+1\choose 1}z+{m\choose 2}{m+1\choose 2}z^2+\ldots + {m\choose m}{m+1\choose m}z^m.
\end{equation*}
In the (usual) notation of \cite{[WW]} (see 14.1, p. 281) the right hand side  is the hypergeometric function (now a polynomial) $F(z):=F(-m,-m-1,1,z)$. As in \cite{[WW]}, 14.2, p. 283, it satisfies the differential equation
\begin{equation*}
z(1-z)F''(z)+(1+2mz)F'(z)-m(m+1)F(z)=0.
\end{equation*}

Observe that, over a field of characteristic $0$ or $p$,  this implies that $F(z)$ has no multiple zero $z_0\neq 0,1$. In fact, let $\mu>1$ be the multiplicity of such a $z_0$ as a zero of $F$, so $\mu <p$. Then $z(1-z)F''(z)$ would have $z_0$ as a zero of multiplicity $\mu -2$, which contradicts the differential equation, since the other two terms would have multiplicity $\ge \mu -1$ at $z_0$. 

\smallskip

Now let us apply all of this to our issue. For the sought result it does not matter neither to change the Weierstrass equation (because $\omega$ gets multiplied by a constant  and $\eta$ gets transformed into a linear combination of $\eta$ and $\omega$), nor to extend the ground field, so we may assume that the equation for $E$ is in Legendre form $y^2=x(x-1)(x-\lambda_0)$, for some $\lambda_0\neq 0,1$ in a finite field $k_0$. We have
\begin{equation*}
yC(\omega)=C(y^{p-1}\d x)=C(x^m(x-1)^m(x-\lambda_0)^m\d x).
\end{equation*}
The polynomial $x^m(x-1)^m(x-\lambda_0)^m$ has degree $3(p-1)/2<2p-2$, so by definition the right side is precisely $H_m(\lambda_0)^{1/p}\d x$. Hence if $\alpha=0$ we have $H_m(\lambda_0)=0$.

Similarly, we obtain $yC(\eta)=H_{m-1}(\lambda_0)^{1/p}\d x$,  hence if also $\beta=0$ we have $H_{m-1}(\lambda_0)=0$.

But then, by the above identities, we have $K_m(\lambda_0)=K_m'(\lambda_0)=0$. However this shows that $F(z)$ would have $z_0:=1/\lambda_0\neq 0,1$ as a multiple zero, a contradiction which concludes the proof.

\end{proof}


\medskip

\begin{rem} (i) The argument  to show that there are no multiple zeros goes back at least to Igusa and is familiar in the theory of the Hasse invariant (see \cite{[Sil]}). 

(ii) The proof also shows explicit formulae for the said constants $\alpha,\beta$, which also allows to compute them from a Legendre equation for $E$.  
Another way to  calculate them comes on  looking at the relevant coefficients in an expansion of $\omega,\eta$ at some place.

(iii) Note that $\alpha=0$ precisely when $E$ is supersingular (see \cite{[L]} or \cite{[Sil]}), which also amounts to the fact that there are no points of order $p$ (over an algebraic closure of the ground field). 
We do not know about a possible related interpretation of the condition $\beta=0$; as $\alpha=0$ is related to "periods"of $E$ (see \cite{[Sil]}), this seems related to periods in a non-split  extension of $E$ by the additive group $\G_a$.

\end{rem}

\medskip

\subsection{Conclusion of the proof of Theorem \ref{T.modp}}\label{SS.conclusion}
As in the proof of Proposition \ref{P.residues}, for every solution of the recurrence \eqref{E.recurrence} over $\F_p$, we obtain an exact   differential form $\xi:={\tilde R(x)\over 2(1+2x)^2}\omega$ (as on the right of (\ref{E.derivative})) in $\F_p[[x]]$,  
where  $\tilde R(x):=(4\bar C_2-8\bar C_1)+(8\bar C_3+\bar C_1)x+ (12\bar C_4+8\bar C_3+2\bar C_2+3\bar C_1)x^2$.

We have already shown that, since it has no residues,  we have $\tilde R' (-1/2)=0$, which gave the condition $6\bar C_4+\bar C_2+\bar C_1=0$. Setting for this proof $t:=2x+1$, this yields 
\begin{equation*}
\xi= \left(K_1+{K_2\over t^2}\right)\omega,
\end{equation*}
where $K_1:= {1\over 4}(-31\bar C_1+18\bar C_2-8\bar C_3+12\bar C_4)$,  $K_2:={1\over 4}(12\bar C_4+8\bar C_3+2\bar C_2+3\bar C_1)$.

By (C1), we have $C(\xi)=0$. 

We now refer to our elliptic curve $E:y^2=4+(x+x^2)^2$, considered in \S \ref{SS.elliptic}, whose notation we maintain. So in particular we let $\eta=(x^2+x)\omega$ be a form of the second kind. In view of propositions \ref{P.alphabeta} and \ref{P.alphabeta2}, adapting to  the present notation one easily finds that  $C(\omega)=\alpha'\omega$, $C(\eta)=\beta'\omega$, for certain $\alpha', \beta'\in \F_p$, not both $0$.

Now, one may easily check the formula
$
\d (y/t)=(\eta/2)+(\omega/8)-(65\omega/8t^2).
$
Then, using $C(\d (y/t))=C(\xi)=0$, we derive the existence of $\gamma,\delta\in \F_p$, not both zero, and such that $\gamma (-31\bar C_1+18\bar C_2-8\bar C_3+12\bar C_4)+\delta(12\bar C_4+8\bar C_3+2\bar C_2+3\bar C_1)=0$. This provides a linear form vanishing on $V_p$. 

Observe that  this linear form is not identically zero and  that the vectors $(-31,18,-8,12)$, $(3,2,8,12)$ and $(1,1,0,6)$ are linearly independent  over $\F_p$, for $p\neq 2,3,5,13$.  Therefore the said linear form is linearly independent of $6\bar C_4+\bar C_2+\bar C_1$.  But  this last linear form vanishes as well on $V_p$, as we have already proved, and hence $\dim V_p\le 2$. Since the opposite inequality was already shown true, this concludes the proof of the theorem.


\medskip

\subsection{Some remarks} \label{SS.alphabeta}

(i)  It is not generally clear to us how, for a fixed elliptic curve $E$ over $\Q$, the distribution of $\alpha=\alpha_p$ and $\beta=\beta_p$  is governed as $p$ varies and we reduce $E$ modulo $p$. (This fits into the type of questions  considered in  Serre's most recent book \cite{[S4]}.)  

After part of this note was written I asked Katz about possible interpretations of the pair $(\alpha_p,\beta_p)$. In the note \cite{K2} he made several remarks, explaining completely the CM case, and pointing out numerical data for the general case.

\medskip


For $\alpha_p$, a celebrated deep result of Elkies \cite{[E]} affirms that $\alpha_p=0$ for infinitely many primes. In the case of Complex Multiplication, it may be proved that this holds actually for a set of primes of positive density (see below for an explicit easy example);   in the other cases such a density is zero (see  \cite{[S2]}, Thm. 3.6.3 for this and much more striking results on the  {\it diophantine} distribution of the values of the trace of Frobenius), but still no quantitative  information on the distribution (for instance as in conjectures of Lang-Trotter) seems to be known beyond Elkies'  infinitude; and for number fields other than $\Q$ not even the infinitude is generally known. 



\medskip

(ii)  In particular, given integers $r,s$ not both zero, we do not  generally know about the distribution of the primes $p$ for which $r\alpha_p+s\beta_p=0$ in $\F_p$. If there is no Complex Multiplication, we expect them to be somewhat {\it sparse}, but cannot prove this. (See (iii) below for a case of CM, and see Katz's paper \cite{K2} for the general CM case and experimental results.) However, using Andr\'e's theorem already invoked for Theorem \ref{T.converse}(ii), we can easily prove that 


\smallskip

{\it For $r,s$ not both zero, the set of primes such that $r\alpha_p+s\beta_p=0$ in $\F_p$ has not density $1$.}

\smallskip

Indeed, otherwise the form $r\omega+s\eta$ would be exact  on  the reduction of $E$ modulo $p$, for a set of primes of density $1$.  But then by Andr\'e's theorem \cite{[A2]}, Prop. 6.2.1, already recalled and used in \S\ref{S.andre}, this form would be exact as a form on $E$, which is not true (an easily seen fact already pointed out in \S\ref{SS.elliptic}).

\smallskip

This also implicitly shows a connection between this question and the above mentioned  conjecture of Grothendieck.  

The said result of Elkies also has an implication in this sense, proving that   if $rs\neq 0$ then such congruence cannot hold for all but finitely many primes: indeed, if $p$ is one of the infinitely many supersingular primes provided by Elkies' theorem, we have $\alpha_p=0$, hence $\beta_p\neq 0$ by Proposition \ref{P.alphabeta2}, so $s\beta_p$ is not zero modulo $p$ if $p$ is large enough.  

In particular, this fact combined with the above proof shows that 

\smallskip

{\it For every $2$-dimensional subspace $V$ of $\Q^4$, there exist infinitely many primes $p$ such that the space $V_p$ is not the reduction modulo $p$ of $V$}.  \footnote{ One may define reduction of a vector subspace of $\Q^m$ for instance through Grassmannian coordinates, or else through a given basis, the reduction  being anyway well defined for large enough primes.} 

\smallskip

In turn, from this  one can derive a weak form of Theorem \ref{T.converse}(ii) without the above  appeal to Andr\'e's theorem. Namely, we may prove that:

\smallskip

 {\it If the vector $(C_1,C_2, C_3,C_4)$ is not proportional to $(1,2,-1/8,-1/2)$, then there are infinitely many primes $p$ such that the sequence $(C_n)$ is not $p$-integral.}  
 
 \smallskip
  
 In fact, suppose that  $(C_n)$ is $p$-integral for all but finitely many primes $p$. Forgetting these last primes, we have that the reduction modulo $p$ of $(C_1,C_2, C_3,C_4)$ lies in $V_p$. 
 If $(C_1,C_2, C_3,C_4)$ is not proportional to $(1,2,-1/8,-1/2)$, these vectors span a $2$-dimensional space $V$ over $\Q$, whose reduction would be always contained in $V_p$. However Theorem \ref{T.modp} states that $V_p$ has dimension $2$, and thus $V_p$ would be the reduction of $V$ modulo $p$ for all but finitely many primes, a contradiction.
 
 \medskip

(iii) Here is a simple example of what may happen in the CM case. Consider the curve defined by $y^2=x^3+1$, which has CM by $\Z[\theta]$ where $\theta$ is a primitive cubic root of $1$. It has good reduction at primes $p\neq 3$.

By  observing that for $p\equiv 2\pmod 3$ the number of its points over $\F_p$ is $p+1$ we see that $\alpha_p=0$ for these primes; or else, we may  argue as in Proposition \ref{P.alphabeta2}: we have $yC(\omega)=C((x^3+1)^{p-1\over 2}\d x)$; in turn, this is computed just by picking the coefficient of $x^{p-1}$ in  $(x^3+1)^{p-1\over 2}$, since this polynomial has degree $<2(p-1)$. But this coefficient, and hence $\alpha_p$,  is clearly zero for these primes.  As to $\beta_p$, it is nonzero for this primes in virtue of Proposition \ref{P.alphabeta2}. 

On the other hand, suppose $p\equiv 1\pmod 3$. Now the same computation shows that $\alpha_p\neq 0$, and actually $\alpha_p={{p-1\over 2}\choose {p-1\over 3}}$. As to $\beta_p$, a similar argument  shows that it is the coefficient of $x^{p-1}$ in 
$x(x^3+1)^{p-1\over 2}$. However for these primes such coefficient plainly vanishes. 

In conclusion, for this example we always have $\alpha_p\beta_p=0$, but  $\alpha_p, \beta_p$ never both vanish. 



\medskip
 
 \section{Some remarks on further issues}
 
 \medskip

\begin{example}\label{Ex.elkies} 

Let $E/\Q$ be an elliptic curve, and let $\omega$ be as above a dfk.  For supersingular primes we have $\alpha_p=0\pmod p$. Using Elkies's result on the existence of infinitely many such primes, this yields an alternative proof of Polya expectation for the case of genus $1$ over $\Q$. In fact if  $(\alpha_p:\beta_p)\in \P_1(\F_p)$ is almost always orthogonal to  the reduction of a fixed $(r:s)\in\P_1(\Q)$ then this would imply $s=0$, bu  we can't have $\alpha_p=0$ always, so for those primes  the form is not exact mod $p$.

The result of Elkies also shows that there are `nontrivial' cases when a differential becomes exact mod $p$ for a set of primes of density $>0$ (actually $=1/2$) but the differential is not exact. See the Open Question below for a stronger requirement. 
\end{example}

 One could ask the following stronger form of Polya problem:
 
 \medskip
 
 {\bf Open Question}: {\it Suppose that $f(x)\d x$ is exact in $\Z_p[[x]]$ for infinitely many primes. Is it true that necessarily $f(x)\d x$ is exact ?}

 \medskip
 
 The transcendence methods of Andr\'e and Chudnovski do not  seem to yield results in this direction. 
 
 By the above we surely obtain that $f(x)\d x$ has zero residues.  This settles the issue in genus $0$, where however it suffices to have exactness over $\F_p$. 
 
 \medskip
 
 In the case of genus $1$ the information over $\F_p$ does not suffice, since $\omega$ is exact mod $p$ for supersingular primes.
 
 Over $\Z_p$ we have congruences of Atkin-Swinnerton Dyer which can give some information.  See for example \cite{LL} for this, for instance the following formula is given for the coefficients of a dfk $\omega=\d x/2y=\sum_{n\ge 1}c_nt^{n-1}\d t$ on an elliptic curve $y^2=x^3+Ax+B$, $A,B\in \Z$, where $t$ is the standard  local parameter  $-x/y$; letting $f_p$ be the trace of Frobenius, we have for $p$ of good reduction
 \begin{equation*}
c_{np^r}-f_pc_{np^{r-1}}+pc_{np^{r-2}}\equiv 0\pmod{p^r},\qquad n,r\ge 1. 
 \end{equation*}
 
 Here $c_1=1$ and $c_p=f_p$.
 
 Suppose now that $\omega$ is exact in $\Z_p[[t]]$ for some $p$. Then $c_m\equiv 0\pmod m$ in $\Z_p$. For $m=p$ we deduce that $f_p=0$, whence $pc_1\equiv 0\pmod{p^2}$, which does not hold.

 \medskip

 QUESTION: Does this give useful  information for $r\omega+s\eta$ ?

 \medskip
 
 OTHER ARGUMENT 
 
 Suppose that $\omega$ is exact in $\Z_p[[t]]$ for infinitely many primes.  Then $\ell(t):=\int \omega$ is in $\Z_p[[t]]$ and yields an isomorphism between the formal groups of $E$ and $\G_a$. This is absurd, e.g. since $[p]$ is not zero on $E\pmod p$, whereas it is $0$ on $\G_a$. 
 
 Alternatively, we can use that the radius of convergence in $\Z_p$ is $1$ for $\ell(t)$ and its inverse $\exp_E$, so we would not obtain torsion points in the disk, which is contrary to what happens.

 NOTE: Here $t$ represents the function $x/y$ in the Weierstrass coordinates for $E$, and is a local parameter at the origin.
 
 \medskip
 
 QUESTION: Does this extend somewhat to $r\omega+s\eta$ ? Maybe looking at  an extension of $E$ by $\G_a$ ?
 
 \medskip
 
 Let us look at what happens, borrowing from \cite{CMZ} the analytic formulae, see pp. 26--28 therein. Letting $G$ be a nontrivial extension of  $E$ by $\G_a$, there is an exponential map $\exp_G:\C^2\to G$, where the latter is embedded in $\P_4$, given by
 \begin{equation*}
 \exp_G(z,w):=(1:\wp(z):\wp'(z):w+\zeta(z):(w+\zeta(z))\wp'(z)+2\wp^2(z)).
 \end{equation*}
 where $\zeta(z)$ is the Weierstrass  zeta-function, such that $\zeta'(z)=-\wp(z)$ and $\zeta(z)=z^{-1}+O(z)$ near $z=0$.
 
 To have the behaviour at the origin it is convenient to divide by $\wp'(z)$, so letting $(\xi_0:\xi_1:1:\xi_3:\xi_4)$ denote the  coordinates in the open affine $\xi_2=1$, , we have $t=\xi_1$,  $w=\xi_4-\zeta(z)-2\wp^2(z)/\wp'(z)$.
 
 A corresponding  embedding in $\P_4$ appears in the paper \cite{CMZ}, (3.6), p. 21, and, in the same affine open $\xi_2=1$,  is expressed by
 \begin{equation*}
 \xi_0=4\xi_1^3-g_2\xi_0^2\xi_1-g_3\xi_0^3,\qquad \xi_0\xi_4-\xi_3=2\xi_1^2,
 \end{equation*}
 where  in homogeneous coordinates $(\xi_0:\xi_1:...)$ the line $\xi_0=\xi_1=\xi_2=0$ has to be omitted, and where the first equation defines the affine part of $E$ in the set $\xi_2=1$ (which amounts to removing from $E$ the points of exact order $2$.  The subgroup $\G_a$ is identified with the line described by $\xi_3,\xi_4$, subject to the equation in the right-hand side (which depends on the point on the elliptic curve $E$ on the left).  
 
 Again in this affine open set $\xi_2=1$ we have a section $E\to G$ given by $(\xi_0:\xi_1:1)\mapsto (\xi_0:\xi_1:1:-2\xi_1^2:0)$.
 
 \medskip
 
 This presentation also defines a formal group law in terms of the parameters $t=\xi_1$ and $\mu=\xi_4$ (so $\xi_0$ will be a power series in $t$ and $\xi_3$ will be defined by the right-hand side equation in terms of $t,\mu$). The group $\G_a$ now is identified with the points $(0:0:1:0:\mu)$ and the addition with such an element  is represented by $\xi_3\to \xi_3+\mu\xi_0, \xi_4\to \xi_4+\mu$.
 
 \medskip
 
 We have the `logarithms' $z=\ell(t)=\int \omega$ and $\zeta(z)=\ell^*(t):=\int \eta$, where $\omega, \eta$ are as above the standard $1$-forms. 
 
 Let us set $x(t),y(t)$ for the Weierstrass coordinates in terms of $t=x/y$.  We may view $t=t(Q)$ as a map on $G$ defined by composing with the projection to $E$. We find that a local inverse to $\exp_G$ near $t=0$  is given by the map 
 \begin{equation*}
 Q\in G\to \C^2:\qquad L(Q)=(\ell(t(Q)), \xi_4(Q)-\ell^*(t(Q))-2x(t(Q))t(Q).
 \end{equation*}
 This is a local homomorphism.
 
 We may now compose with the map $(u,v)\mapsto ru-sv$, from $\C^2$ to $\C$, which is a homomorphism, to obtain a local homomorphism
 \begin{equation*}
 \psi(Q)=r\ell(t)+s\ell^*(t)+2s\cdot x(t)t-s\cdot \xi_4(Q)=:\sigma(t)+2s\cdot x(t)t-s\cdot \xi_4(Q),
\end{equation*}
where $t=t(Q)$ and where we have denoted $\sigma(t):= r\ell(t)+s\ell^*(t)$.

 Here $r,s$ are integers as above, and we suppose that any of them is divisible by $p$ only if it vanishes (but not both vanish, by assumption). By our assumption the series $\sigma(t)=r\ell(t)+s\ell^*(t)$ has $p$-integer coefficients, so $\psi$ induces    a (nonzero) homomorphism of formal groups defined over $\Z_p$ (that is, from the formal group associated to $G$ towards the formal group associated to $\G_a$).  For $p$ of good reduction the series $x(t)$ is over $\Z_p$ as well. 
 
 \medskip
 
 \noindent{\bf Remark}. An extension $G$ of $E$ by $\G_a$ over $\F_p$ is isogenous to a split extension, since the map $Q\mapsto pQ$ has kernel containing $\G_a$, so the image  is a curve which serves for the splitting (see also \cite{V} for more precision as to this and several other facts).  However $G$ is not itself split not even over $\F_p$, as follows from the description of $G$ as a generalized Jacobian (or else as in Voloch's paper \cite{V} as a sheaf with connections). 
 
 We note that the only points of order $p$ of $G$ lie in $\G_a$.  We were unable at the moment to find a reference for this fact, which we can prove as follows.  Let $Q\in G-\G_a$ have exact order $p$, so the projection $R=\pi(Q)\in E$ has exact order $p$. This implies that the divisor $p((R)-(-R))$  is principal, say $=\div(F)$ with $F$ a rational function on $E$.  The differential $\d F$ has order $\ge p$ at $R$ and $\le p$ at $-R$, so either vanishes or has divisor $=\div(F)$. The first option is impossible, since $F$ cannot be a $p$-th power. Then $\d F= cF\cdot \omega$, for a nonzero constant $c$. But then the interpretation of $G$ as a generalized Jacobian (see \cite{[S3]} and \cite{CMZ}) proves that $pQ$ is not $0$ on $G$. Since modifying $Q$ by a point of $\G_a$ does not change $R$, this proves the conclusion. 

(Probably another proof follows on noting that  $G$ modulo $p$ is not split, so not isomorphic to $E\times \G_a$, though isogenous to it. Then the image of $[p]$ on $G$ must intersect $\G_a$, proving the contention.) 

\medskip

{\bf A possible argument}. Let us call $G_p$ the reduction of $G$ modulo $p$. As observed above, this is non-split but isoegnous to a split extension.

Indeed,  the multiplication map $Q\mapsto [p]Q$ on $G$ annihilates $\G_a$, whence the image is a(n affine) curve, whose closure in $G$ we denote $H$; in fact the image is closed because it is a subgroup, so $G_p=H$.

The (formal) kernel $K$ of $\psi$ is given by the equation $\xi_4(Q)=s^{-1}\sigma(t)+2tx(t)\in \Z_p[[t]]$ (in fact the poles cancel).  We observe that $K$ is a formal subgroup of the formal group corresponding to $G$, which we denote by $\widehat G$.

We can reduce modulo $p$ the series $\sigma(t)$ and $x(t)$.  The map $[p]$  sends $\widehat G_p$ to the reduction $\widehat K_p$ of $\widehat K$, since $\psi$ goes to $\G_a$, which is annihilated by $[p]$. 

The  above mentioned section  $(\xi_0:\xi_1:1)\mapsto (\xi_0:\xi_1:1:-2\xi_1^2:0)$ yields a formal section $\widehat E\to \widehat G$, which can be reduced modulo $p$ and multiplied by $p$, hence we obtain a formal map $\gamma: \widehat E_p\to \widehat K_p$ which is nontrivial, since $[p]$ is nontrivial on $E_p$.  Therefore $\widehat K_p$ is the formal image of $\widehat E_p$ and we also deduce that $K_p$ is an algebraic curve in $G_p$.  Since it is a subgroup it is isomorphic necessarily to $E_p$ (under the projection map). This yields a contradiction with the fact that $G_p$ is not split. 

\medskip

NOTE: (i) Observe also that $E_p$ has points of order $p$ in the ordinary case, hence we obtain a contradiction with a deduction above. On the other hand, in the supersingular case we have $\alpha_p=0$ hence $\beta_p$ cannot be zero, and hence $s=0$ and we go back to the case of elliptic curves treated at the beginning of this section. 

(ii) Probably a different way to argue is in characteristic zero, on using a point of order $p$ congruent to the origin, and evaluating the various series at that point. 

\medskip

Thus we have proved an improved form of the Polya assumption the genus 1 case:

\begin{thm}
Let $f(x)\in \Z[[x]]$ be a series representing a rational  function on an elliptic curve, and suppose that for infinitely many primes $p$ the integral of $f(x)\d x$ has $p$-integer coefficients. Then the integral is also an algebraic function.
\end{thm}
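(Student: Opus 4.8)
The plan is to remove the exact part of the differential $\xi:=f(x)\,\d x$, to identify what remains with a combination $r\omega+s\eta$ of the two standard second-kind differentials on $E$, and then to contradict the case $(r,s)\neq(0,0)$ by showing that $p$-integrality of the relevant antiderivative would force a certain formal homomorphism to be defined over $\Z_p$, which is impossible: for $s=0$ this is the classical formal-group argument, and for $s\neq0$ it goes through the generalised Jacobian of $E$. (Unlike the general algebraic case treated in \S\ref{S.andre} via Andr\'e's theorem, which needs a density-one hypothesis, here a single well-chosen prime essentially suffices, so the stated infinitude of primes is ample.) Concretely: $\xi$ is an algebraic differential of $\Q(E)$, regular at the finite place $P_0$ where we expand (after a finite base change we take $P_0$ rational, with local parameter $t$). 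Since $\int\xi\in\Z_p[[t]]$ for infinitely many $p$, Proposition \ref{P.residues} gives that $\xi$ has no residues, i.e.\ is of the second kind; so by \S\ref{SS.elliptic} we may write $\xi=\d g+r\omega+s\eta$ with $g\in\Q(E)$, $r,s\in\Q$, $\omega=\d x/y$, $\eta=x\omega$. Multiplying $f$ to clear the denominators of $r,s$, we take $r,s\in\Z$; and since a fixed rational function over $\Q$ has $p$-integral expansion at $P_0$ outside finitely many primes, after discarding the primes of bad reduction, those dividing the multiplier, and those dividing the nonzero entries of $(r,s)$, we keep an infinite set of primes $p$ of good reduction, prime to those entries, with
\begin{equation*}
\sigma(t):=r\ell(t)+s\ell^{*}(t)\ \in\ \Z_p[[t]],
\end{equation*}
where $\ell=\int\omega$ is the elliptic logarithm and $\ell^{*}=\int\eta$ its companion.

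\smallskip

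\noindent\textbf{The trivial case and the case $s=0$.} If $r=s=0$ then $\xi=\d g$ is exact and $\int\xi=g$ is algebraic, as claimed; so assume $(r,s)\neq(0,0)$. If $s=0$ then $\ell(t)\in\Z_p[[t]]$ (since $p\nmid r$); as $\ell(t)=t+O(t^{2})$, its formal inverse is also in $\Z_p[[t]]$, so $\ell$ is a formal isomorphism between the formal groups of $E$ and $\G_a$ over $\Z_p$. Reducing modulo $p$ gives $\widehat E_p\cong\widehat{\G}_a$, impossible since $\widehat E_p$ has finite height whereas $[p]$ vanishes on $\widehat{\G}_a$. (One may also note that whenever a chosen prime is supersingular one has $\alpha_p=0$, hence $\beta_p\neq0$ by Proposition \ref{P.alphabeta2}, while $\int\xi\in\Z_p[[t]]$ forces $0=C(\bar\xi)=(r\alpha_p+s\beta_p)\bar\omega$ by Proposition \ref{P.alphabeta}, so $p\mid s$ and hence $s=0$; thus one may even assume $s\neq0$ and every chosen prime ordinary, though this is not needed.)

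\smallskip

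\noindent\textbf{The case $s\neq0$ via the generalised Jacobian.} Let $G$ be the nontrivial extension of $E$ by $\G_a$ --- its universal vectorial extension, the curve of \cite{CMZ} embedded in $\P_4$ with the exponential map $\exp_G$ recalled above. The linear form $(u,v)\mapsto ru+sv$ on the $2$-dimensional Lie algebra of $G$ integrates, by the universal property of the vectorial extension, to a homomorphism $\psi\colon G\to\G_a$; in the formal coordinates at $P_0$ this $\psi$ is expressed through $\sigma(t)=r\ell(t)+s\ell^{*}(t)$, the Weierstrass coordinate $x(t)$ and the $\G_a$-coordinate, as in the analytic formulae displayed before. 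By the first part of the argument, together with $x(t)\in\Z_p[[t]]$ at a prime of good reduction, the induced formal homomorphism $\widehat\psi\colon\widehat G\to\widehat{\G}_a$ --- which is defined over $\C$ a priori --- is in fact defined over $\Z_p$. Its kernel $\widehat K:=\ker\widehat\psi$ is a smooth $1$-dimensional formal subgroup of $\widehat G$ over $\Z_p$, and since $s\neq0$ the tangent map of the projection $\widehat K\to\widehat E$ is bijective; hence $\widehat K\to\widehat E$ is a formal isomorphism over $\Z_p$. Composing its inverse with the inclusion $\widehat K\hookrightarrow\widehat G$ gives a $\Z_p$-section of $\widehat G\to\widehat E$, so the formal extension $0\to\widehat{\G}_a\to\widehat G\to\widehat E\to0$ splits over $\Z_p$. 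This contradicts the fact that $\widehat G$, being the formal completion of the universal (hence nontrivial) vectorial extension of $E$ by $\G_a$, does not split over $\Z_p$ --- its non-splitness already over $\F_p$ being the remark recalled above. The contradiction proves the theorem.

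\smallskip

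\noindent\textbf{The main obstacle.} The delicate part is the last step, and two items need honest work. First, one must verify that $\sigma\in\Z_p[[t]]$ really forces $\widehat\psi\colon\widehat G\to\widehat{\G}_a$ to be $\Z_p$-rational, not merely the restriction of $\psi$ along the section $\widehat E\to\widehat G$: this is a descent through the explicit formal group law of $G$ from \cite{CMZ}, using that $x(t)$ and the cocycle defining the extension are $\Z_p$-integral at primes of good reduction. Second, one must pin down precisely why $\widehat G$ does not split over $\Z_p$; this is standard --- $\widehat G$ is the formal group of the universal vectorial extension, and $\operatorname{Ext}^{1}(\widehat E,\widehat{\G}_a)\neq 0$ over $\Z_p$ --- but it is exactly the step where the integrality, rather than mere (automatic, vacuous) $\Q_p$-rationality, is used; one could also phrase this last contradiction concretely via radii of convergence and torsion points, in the spirit of the $s=0$ case. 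The more hands-on ``subgroup'' argument sketched in the text is in fact trickier: in the nonsplit case over $\F_p$ the isogeny $K_p\to E_p$ turns out to be \'etale of degree $p$, so it does not by itself yield a contradiction, and the Lie-algebra/descent argument above is the route I would take.
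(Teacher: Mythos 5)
Your proposal follows the same route as the paper's: Proposition \ref{P.residues} to kill the residues, the decomposition of second-kind differentials to reduce to $r\omega+s\eta$, the formal-group/torsion argument for $s=0$, and, for $s\neq 0$, the generalised Jacobian $G$ with the formal homomorphism $\psi$ and its kernel. The divergence is only in the final contradiction, and that is exactly where there is a genuine gap. You conclude by asserting that a $\Z_p$-splitting of $0\to\widehat{\G}_a\to\widehat G\to\widehat E\to 0$ contradicts ``its non-splitness already over $\F_p$''. But the non-splitness established in the text is that of the \emph{algebraic} group $G_p$; it does not transfer to the \emph{formal} group $\widehat G_p$. The only mod-$p$ obstruction to a formal splitting that the machinery of \S\ref{SS.cartier} supplies is the solvability of $\beta_p=c\,\alpha_p$: a homomorphic formal section $\sigma$ pulls the second invariant differential $\nu$ of $G$ (which differs from $\d\xi_4+\eta$ by an exact rational form) back to $c\,\omega$, forcing $C(\eta-c\omega)=0$; conversely, when $\alpha_p\neq 0$ one may take $c=\beta_p/\alpha_p$ and write $\eta-c\omega=\d h$ with $h$ rational on $E_p$ (its extra poles have order divisible by $p$ and are invisible at the origin), so the graph of $-h$ gives a formal --- though not algebraic --- splitting over $\F_p$. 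Since your own reduction discards the supersingular primes (where $\alpha_p=0$ forces $s\equiv0$), every prime you retain in the $s\neq 0$ case is ordinary, and the mod-$p$ obstruction vanishes identically there. What you actually need --- that no $c\in\Q_p$ makes $(r/s)\ell+\ell^{*}+2tx(t)$ lie in $\Z_p[[t]]$, i.e.\ that the class of $\widehat G$ in ${\rm Ext}^1_{\Z_p}(\widehat E,\widehat{\G}_a)$ is nonzero --- is an integral statement at the level of Atkin--Swinnerton-Dyer-type congruences for the quasi-periods, and it is essentially equivalent to the theorem in the case $s\neq 0$; invoking it as ``standard'' leaves the argument circular.

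On the other hand, your criticism of the ``hands-on'' route is sharp and, as it happens, applies to the paper's own conclusion: the algebraic subgroup $[p]G_p$ meets $\G_a$ in the image of $E_p[p]$ under the connecting map, which the $p$-torsion lemma in the text shows to be nontrivial, so $[p]G_p\to E_p$ is an \'etale isogeny of degree at least $p$ rather than an isomorphism; and in characteristic $p$ the pullback of the extension class in $H^1(E_p,\O)$ along such an isogeny can vanish, so the existence of this complete subgroup does not by itself contradict the non-splitness of $G_p$. You have therefore put your finger on the genuinely delicate point of the whole argument; but the Lie-algebra/descent route you propose in its place runs into the obstruction described above, so neither version closes the gap as written.
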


\medskip

 \section{Further miscellaneous remarks}
 
 {\bf 1}. The operation on series $s(x)\mapsto s^\phi(x):=p^{-1}(\sum_{\theta^p=1}s(\theta x))$ sends $\sum_{n=0}^\infty a_nx^n$ to $\sum_{p|n}a_nx^n$ and hence preserves the series with integer coefficients. It is plainly relevant in some of the above considerations, since for instance it may be easily used to represent the Cartier operator in zero characteristic. 
 
 However, even if e.g. $s(x)$ is integral over $\Z_p[x]$, the same does not necessarily hold for $s^\phi$. For an example take $p=3$ and $s(x)=(1+x)^{1/2}=\sum {1/2\choose n}x^n$, of degree $2$ over $\Q_3(x)$.  The series $s^\phi$ has degree $8$ over $\Q_3(x)$ (for instance since the branch points of $s(x),s(\zeta x), s(\zeta^2 x)$, where $\zeta^2+\zeta+1=0$, do not match). 
 
 So the conjugates of $s^\phi(x)$ over $\Q_3(x)$ are the $(\pm s(x)\pm s(\zeta x)\pm s(\zeta^2x))/3$, for all possible choices of the signs. But not all the corresponding series expansions have  $3$-integer coefficients.  This leads to a contradiction with integrality over $\Z_3[x]$.

 
 \smallskip
 
 Then it seems better to use instead the $(p-1)$-th divided derivative, which as a series yields $\sum_{n=0}^\infty {n\choose p-1}a_nx^{n-p+1}\equiv \sum_{n\equiv -1}a_nx^{n-p+1}\pmod p$. 
 
 This derivative may be convenient because (in zero characteristic) it continues to lie in the field $K(x,s(x))$.

 \medskip

 \bigskip

\bigskip
\bigskip

\bigskip
\bigskip

\bigskip\medskip

\end{document}